\def\fakeend{\end{document}}
\newcommand{\ignore}[1]{}
\newcommand{\startClaims}{\setcounter{claim}{0}}
\newtheorem{theorem}{Theorem}[section]
\newtheorem{corollary}[theorem]{Corollary}
\newtheorem{lemma}[theorem]{Lemma}
\newtheorem{conjecture}[theorem]{Conjecture}
\newtheorem{question}[theorem]{Question}
\newtheorem{definition}[theorem]{Definition}
\newtheorem{claim}{Claim}
\newtheorem{observation}[theorem]{Observation}
\let\c@figure\c@theorem
\title{Convex Drawings of the Complete Graph:\\ Topology meets Geometry}
\author{Alan Arroyo\footnote{Supported by CONACYT.}\ ${}^+$, Dan McQuillan${}^\pm$,\\  R.\ Bruce Richter\footnote{Supported by NSERC.
\newline{\textcolor{white}{...}}
${}^+$University of Waterloo, ${}^\pm$Norwich University, and ${}^\times$UASLP}\ ${}^+$, and Gelasio Salazar${}^*$${}^\times$}
\date{\LaTeX-ed: \today}
\newenvironment{proof}%
{\noindent{\bf Proof.}\ }%
{\hfill\eopf\par\bigskip}%
\def\eop{\hfill{{\rule[0ex]{7pt}{7pt}}}}
\newenvironment{cproofof}[1]
{\bigskip\noindent{\bf Proof of #1.}\startClaims\ }
{\hfill{\eop}\par\bigskip}
\newenvironment{cproof}
{\noindent{\bf Proof.}\startClaims\ }
{\hfill{\eop}\par\bigskip}
\def\i4c{{inter\-nally-4-con\-nec\-ted}}
\def\p4c{\wording{peri\-phe\-rally-4-con\-nec\-ted}}
\def\ei4c{\operatorname{\wording{p4c}}}
\def\2cc{{2-cros\-sing-cri\-tical}}
\def\m2{{{\cal M}_2^3}}
\newcommand{\crn}{\operatorname{cr}}
\newcommand{\eopf}{\raisebox{0.8ex}{\framebox{}}}
\newcommand{\lateoct}[1]{#1}
\newcommand{\oct}[1]{#1}
\newcommand{\sept}[1]{#1}
\newcommand{\augusteight}[1]{#1}
\newcommand{\julythirtyone}[1]{#1}
\newcommand{\majorrem}[1]{}
\newcommand{\minorrem}[1]{}
\newcommand{\wording}[1]{#1}
\newcommand{\wordingrem}[1]{}
\newcommand{\dragominorrem}[1]{}
\def\r5{r^*_{+5}}
\def\rightspine #1{{}_{#1}\kern-3pt\sqsubset}
\def\redit#1{#1}
\def\blueit#1{#1}
\def\rbrnew#1{#1}
\def\rbr#1{#1}
\def\k611{\mathbb K_6^{11}}
\def\badTkF{\widetilde {\mathbb K}_5^3}
\def\badFkF{\widetilde{\mathbb K}_5^5}
\def\badPF{\widetilde{\mathbb P}_4}
\def\EkS{\mathbb K_6^{11}}
\def\fpp{Four Point Property}
\begin{document}

\maketitle

\begin{abstract} In this work, we introduce and develop a theory of convex drawings of the complete graph $K_n$ in the sphere.  A drawing $D$ of $K_n$ is {\em convex\/} if, for every 3-cycle $T$ of $K_n$, there is a closed disc $\Delta_T$ bounded by $D[T]$ such that, for any two vertices $u,v$ with $D[u]$ and $D[v]$ both in $\Delta_T$, the entire edge $D[uv]$ is also contained in $\Delta_T$.

As one application of this perspective, we consider drawings {containing} a {non-convex $K_5$ that has} restrictions on its \redit{extensions to drawings of $K_7$}.  For each such drawing, we use convexity to produce a new drawing with fewer crossings.
This is the first example of  local considerations providing  sufficient conditions for suboptimality.  In particular, we do not compare the number of crossings {with the number of crossings in} any known drawings.  This result sheds light on Aichholzer's  computer proof (personal communication) showing that, for $n\le 12$, every optimal drawing of $K_n$ is convex.

Convex drawings are characterized by excluding two of the five drawings of $K_5$.   
Two refinements of convex drawings are h-convex and f-convex drawings.  The latter have been shown by Aichholzer et al (Deciding monotonicity of good drawings of the complete graph, \rbrnew{Proc.~XVI  Spanish Meeting on Computational Geometry (EGC 2015),  2015}) and, independently, the authors of the current article (Levi's Lemma, pseudolinear drawings of $K_n$, and empty triangles, \rbr{J. Graph Theory DOI: 10.1002/jgt.22167)}, to be equivalent to pseudolinear drawings.  Also, h-convex drawings are equivalent to pseudospherical drawings as demonstrated recently by Arroyo et al (Extending drawings of complete graphs into arrangements of pseudocircles, \rbrnew{submitted}).  

These concepts give a hierarchy of drawings of complete graphs, from most restrictive to most general:  rectilinear, f-convex, h-convex, convex, general topological.  This hierarchy provides a framework to consider generalizations of various geometric questions for point sets in the plane.  We briefly discuss two:  numbers of empty triangles and existence of convex $k$-gons.

For all of these levels of convexity, we are interested in forbidden structure characterizations.  For example, topological drawings are required to be ``good", so they are determined by forbidding two closed edges that intersect twice (there are essentially three forbidden structures).  Convex drawings are characterized by excluding, in addition, the two non-rectilinear $K_5$'s, while h-convex drawings are characterized by excluding, in addition, a particular drawing of $K_6$.
\end{abstract}
\baselineskip =18pt

\section{Introduction}\label{sec:intro}


We begin with the notion of a convex drawing of $K_n$. \redit{If $D$ is a drawing of a graph $G$, and $H$ is a subgraph of $G$ (or even a set of vertices and edges of $G$), then we let $D[H]$ denote the drawing of $H$ induced by $D$.}

\begin{definition}\label{df:convex} Let $D$ be a drawing of $K_n$ in the sphere.
\begin{enumerate}\item If $T$ is a 3-cycle in $K_n$, then a closed disc $\Delta$ bounded by $D[T]$ is {\em a convex side of $T$} if, for any distinct vertices $x$ and $y$ of $K_n$ such that $D[x]$ and $D[y]$ are both contained in $\Delta$, then \redit{$D[xy]$ is} also contained in $\Delta$.
\item The drawing \redit{$D$} is {\em convex\/} if every 3-cycle of $K_n$ has a convex side.
\end{enumerate}
\end{definition}

As is usual  (though certainly not universal)  in the context of drawings, we forbid any  crossing between edges incident with a common vertex and more than one crossing between any two edges.  The first of these implies that, in a drawing $D$ of $K_n$, for a 3-cycle $T$, $D[T]$ is a simple closed curve.  

We will see in Section \ref{sec:convex} that the special case that one of $x,y$ in Definition \ref{df:convex} is in $T$ is especially interesting:  it essentially characterizes convex drawings.

There is a long-standing conjecture due to the artist Anthony Hill; this attribution is provided by Beineke and Wilson in their attractive history \cite{bw} of crossing numbers.  The conjecture asserts \redit{(see \cite{hh})} 
that the crossing number $\crn(K_n)$ of $K_n$ is equal to 
\[
H(n):=\frac14\left\lfloor \frac {\mathstrut{n}}{\mathstrut{2}}\right\rfloor \left\lfloor \frac {n-1}2\right\rfloor \left\lfloor \frac {n-2}2\right\rfloor  \left\lfloor \frac {n-3}2\right\rfloor\,.
\]

We came upon convex drawings in attempting to extend the work of \'Abrego et al \cite{abrego} that ``shellable drawings" of $K_n$ have at least $H(n)$ crossings.  We were trying to prove some technical fact (now lost in the mists of time) and could do so for what turned out to be convex drawings.  In investigating these drawings further, we realized they have many nice properties.  Moreover, there are a few natural levels of convexity, as we see in the next definition.

\begin{definition}  Let $D$ be a convex drawing of $K_n$.
 \begin{enumerate}\item Then $D$ is {\em hereditarily convex\/} (abbreviated to h-convex) if there is a choice $\Delta_T$ of convex sides of each 3-cycle $T$ such that, if  $T$ and $T'$ are 3-cycles such that $D[T']\subseteq \Delta_{T}$, then \redit{the closed disc $\Delta_{T'}$ bounded by $T'$ that is contained in $\Delta_T$ is a convex side of $T'$}. 
\item Then $D$ is {\em face convex\/} (abbreviated to f-convex) if there is a face $\Gamma$ of $D$ such that, for every 3-cycle $T$ of $K_n$, the side of $D[T]$ disjoint from $\Gamma$ is convex.
\end{enumerate}
\end{definition}

It is an easy exercise to prove that an f-convex drawing is also h-convex.  Moreover, every recti- or pseudolinear drawing of $K_n$ is f-convex, with $\Gamma$ being the infinite face.   In fact, Aichholzer et al \cite{ahpsv} and, independently, the current authors \cite{faceConvex} have shown that f-convex is equivalent to pseudolinear.  Generalizing great circle drawings in the sphere, Arroyo et al \cite{spherical} have introduced a natural notion of ``pseudospherical drawings" of $K_n$ in the sphere; they are exactly the h-convex drawings.

Thus, there is a hierarchy of drawings from most to least restrictive:
\begin{enumerate}
\item rectilinear;
\item f-convex (= pseudolinear);
\item h-convex (= pseudospherical); 
\item convex; and
\item topological.
\end{enumerate}
Intriguingly, Aichholzer (personal communication) has computationally verified (using our characterization Lemma \ref{lm:badK5's} below) that, for $n\le 12$, every optimal topological drawing of $K_n$ is convex.


In Figure \ref{fg:examples} are two  drawings of $K_5$, one of $K_6$, and one of $K_8$.  These are: the two (up to spherical homeomorphisms) non-convex  drawings $\badTkF$ and $\badFkF$ of $K_5$ with three and five crossings, respectively; the drawing $\k611$ of $K_6$, which is convex but not h-convex; and the drawing $TC_8$ of $K_8$, which  is the ``tin can drawing" and is h-convex but not f-convex.

\begin{figure}
\begin{center}
\includegraphics[scale=0.6]{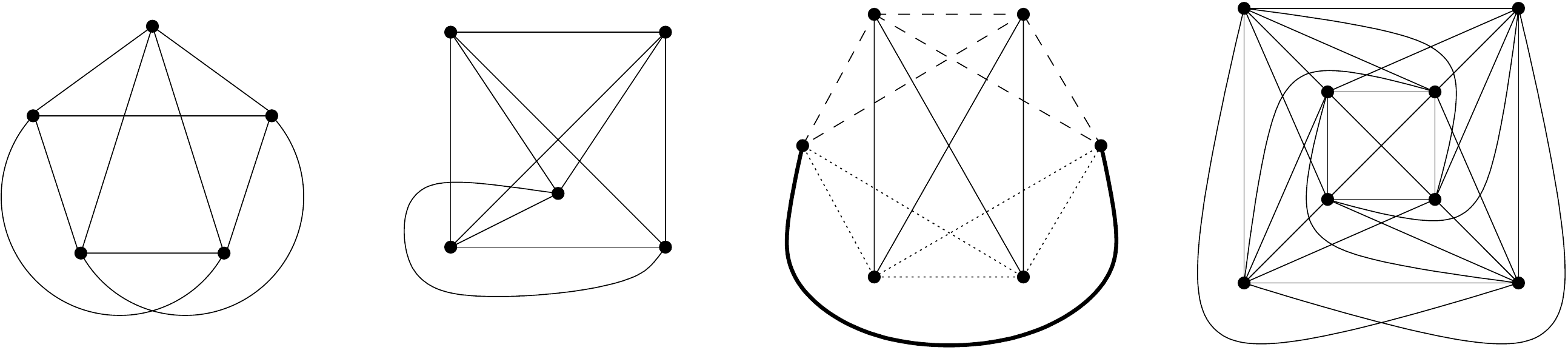}\qquad
\caption{Drawings of interest:  $\badTkF$, $\badFkF$, $\k611$, and $TC_8$.}\label{fg:examples}
\end{center}
\end{figure}

The two $K_5$'s in Figure \ref{fg:examples} are precisely the non-rectilinear drawings of $K_5$\augusteight{.  Lemma \ref{lm:badK5's} shows that a drawing $D$ of $K_n$ is convex precisely when each of its $K_5$'s is homeomorphic to one of the rectilinear $K_5$'s.} Therefore, convexity may understood as ``local rectilinearity"\hskip -5pt.  This is where the topological and geometric intersect.  

Thinking about the convexity hierarchy, it is natural to wonder about generalizing problems about point sets in general position in the plane to drawings of complete graphs.  One question of long-standing interest is:  given $n$ points in general position in the plane, how many of the 3-tuples (that is, triangles) have none of the other points inside the triangle (empty triangle)?   Currently, we know that there can be as few as about $1.6n^2+{}$o$(n^2)$  empty triangles \cite{bv} and every set of $n$ points has at least \redit{$n^2+O(n)$} empty triangles (\redit{$n^2+o(n^2)$} first proved in \cite{barany}).  In \cite{faceConvex}, we proved the \redit{$n^2+o(n^2)$} bound also holds for f-convex drawings.   At the other extreme, Harborth \cite{fewEmpties} presented an example of a topological drawing of $K_n$ having only $2n-4$ empty triangles, while Aichholzer et al \cite{someEmpties} show that every topological drawing of $K_n$ has at least $n$ empty triangles.  We have shown in \cite{faceConvex} that every convex drawing of $K_n$ has at least \redit{$\frac 13n^2 +{}O(n)$} empty triangles.  For h-convex, it is shown in \cite{spherical}, using the f-convex result and other facts about h-convex drawings, that there are at least \redit{$\frac34n^2+{}o(n^2)$} empty triangles.   We would be interested in progress related to the coefficients $\frac13$ and $\frac34$.

Another question of interest is:  given $n$ points in general position in the plane, what is the largest $k$ so that $k$ of the $n$ points are the corners of a convex $k$-gon?  In Theorem \ref{th:convexE-S}, we will generalize to convex drawings the Erd\H os-Szekeres theorem \cite{es} that, for every $k$, there is an $n$ such that every $n$ points in general position has a set of $k$ points that are the corners of a convex $k$-gon.  Finding the least such $n$ is of current interest.  Suk \cite{suk} has shown that $2^{k+\textrm{o}(k)}$ points suffices in the geometric case.  For $k=5$, 9 points is best possible in the rectilinear case (see Bonnice \cite{bonnice} for a short proof).

For a general drawing $D$ of $K_n$, we can ask whether there is a subdrawing $D[K_k]$ such that one face is bounded by a $k$-cycle: this is a {\em natural drawing of $K_k$\/}.    Bonnice's proof adapts easily to the pseudolinear case (that is, the f-convex case).  Aichholzer (personal communication) has verified by computer that 11 points is best possible for $k=5$ in the convex case.  Our characterization Theorem \ref{th:elevenK6} of h-convex implies 11 is also best possible for h-convex.    For general drawings, there need not be a natural $K_5$; one example is $\badFkF$.  Harborth's example having only $2n-4$ empty triangles has every $K_5$ homeomorphic to $\badFkF$. 

These geometric connections open up new possibilities for studying the original geometric questions and also to seeing how the results differ for convex drawings.  

Section \ref{sec:convex} introduces many fundamental properties of a convex drawing $D$ of $K_n$, including showing convexity of $D$ is equivalent to not containing either of the two non-rectilinear drawings of $K_5$ (the first two drawings in Figure \ref{fg:examples}).   Another equivalence is that every 3-cycle $T$ has a side such that every vertex $v$ on that side is such that $D[T+v]$ induces a non-crossing $K_4$.

Section \ref{sec:natural} proves that a convex drawing $D$ of $K_n$ has a particularly nice structure:  there is a natural $K_r$ such that $D[K_r]$ has a face $\Gamma$ bounded by an $r$-cycle $C$; if $D[v]$ is in $\Gamma$ and $D[w]$ is in the closure of $\Gamma$, then $D[vw]$ is in $\Gamma\cup \{D[w]\}$; and if $D[v]$ and $D[w]$ are in the complement of $\Gamma$, then $D[vw]$ is in the complement of $\Gamma$.  This structure theorem provides hope for showing that a convex drawing of $K_n$ has at least $H(n)$ crossings.

Section \ref{sec:hereditary} treats h-convex drawings.  The main result here is that a convex drawing $D$ is h-convex if and only if there is no $K_6$ such that $D[K_6]$ is $\k611$ in Figure \ref{fg:examples}.   We do not know a comparable result distinguishing f-convex drawings from h-convex.  The tin can drawing $TC_8$ of $K_8$ in Figure \ref{fg:examples} is one such (as are the larger tin can drawings).  However it is not clear to us whether $TC_8$ is the only minimal one or, indeed, if there are only finitely many minimal distinguishing examples.  The final result of the section is that testing a set of convex sides for h-convexity is also a ``\fpp", which is to say that it can be verified by checking all sets of four points.

\ignore{Section \ref{sec:knuth} relates convexity and Knuth's axioms for a CC-system.  This is motivated by the fact that Aichholzer et al \cite{ahpsv} use CC-systems to characterize pseudolinear drawings.  In \cite{faceConvex}, we proved that pseudolinearity is equivalent to f-convex by direct means.  Section \ref{sec:drawingsWithFewPtsK4} treats the special case of convex drawings in which each crossing $K_4$ has at most $p$ vertices drawn in faces of the $K_4$ that are incident with the crossing.  In this context, as $n$ grows, the structure given in Section \ref{sec:structure} shows that there is at most one vertex drawn outside a ``maximal natural $K_r$".  In the case of an h-convex drawing with at most $p$ vertices drawn inside each crossing $K_4$, this implies that there are at least $H(n)$ crossings.}

Finally, in Section \ref{sec:K9}, we suppose $J$ is a non-convex $K_5$ in a drawing $D$ of $K_n$ such that every $K_7$ that contains $J$ has no other non-convex $K_5$.  The main result of this section is that \augusteight{there is a second drawing $D'$ of $K_n$ such that $\crn(D')< \crn(D)$.}  There is no reference to $H(n)$ in the argument.  This is the first result of such a local nature.     This theorem is related to Aichholzer's empirical observation that, for $n\le 12$, every optimal drawing of $K_n$ is convex.

\ignore{There are other interesting facts about convex drawings not included here.  For example, Aichholzer has used his computer techniques for generating all different drawings of $K_n$ to show that, if $n\ge 11$, then every convex drawing of $K_n$ has a natural $K_5$.   A well-known and easy Ramsey argument (used to show that, for any $r$, if $n$ is large enough, then any $n$ points in the plane has a convex $r$-gon) shows that, for the same $n$, any convex drawings of $K_n$ has a natural $K_r$.  Getting decent upper bounds for $n$ when $r=6$ seems difficult in both contexts, but we hope that convex drawings provide a new avenue to study this problem.

In \cite{faceConvex}, we extended the well-known theorem of B\'ar\'any and F\"uredi \cite{barany} that every set of $n$ points in the plane with no three collinear has $n^2+O(n\log(n))$ empty triangles to f-convex drawings.  We further extended that to convex drawings by showing that every convex drawing of $K_n$ has $\frac13n^2+O(n)$ empty triangles.     Harborth \cite{fewEmpties}  presents examples of drawings of $K_n$ with only $2n-2$ empty triangles. Aichholzer et al \cite{someEmpties} prove that every drawing of $K_n$ has at least $n$ empty triangles.}

\bigskip
\centerline{
\begin{tabular}{|c|c|c|}
\hline
\phantom{$\displaystyle\frac{\mathstrut 1}{\mathstrut 1}$}level\phantom{$\displaystyle\frac{\mathstrut 1}{\mathstrut 1}$}&characterization&distinguish\\
\hline
general&edges share $\le 1$ point&\phantom{$\displaystyle\frac{\mathstrut 1}{\mathstrut 1}$}\\
\hline
convex&general, no $\badTkF$, $\badFkF$&\phantom{$\displaystyle\frac{\mathstrut 1}{\mathstrut 1}$}$\badTkF$\phantom{$\displaystyle\frac{\mathstrut 1}{\mathstrut 1}$}\\
\hline
h-convex&convex, no $\k611$&\phantom{$\displaystyle\frac{\mathstrut 1}{\mathstrut 1}$}$\k611$\phantom{$\displaystyle\frac{\mathstrut 1}{\mathstrut 1}$}\\
\hline
f-convex&\phantom{$\displaystyle\frac{\mathstrut 1}{\mathstrut 1}$}h-convex + ??\phantom{$\displaystyle\frac{\mathstrut 1}{\mathstrut 1}$}&$TC_8$\\
\hline
rectilinear&\phantom{$\displaystyle\frac{\mathstrut 1}{\mathstrut 1}$}f-convex + ??\phantom{$\displaystyle\frac{\mathstrut 1}{\mathstrut 1}$}&Pappus\\
\hline
\end{tabular}
}

\bigskip

This work will \rbr{provide} characterizations of the different kinds of convexity and distinguishing between them by examples and theorems.  Our efforts are summarized in the above table.

\section{Convex drawings}\label{sec:convex}

In this section we introduce the basics of convexity.  We already mentioned in the introduction that the two drawings $\badTkF$ and $\badFkF$ of $K_5$ in Figure \ref{fg:examples} are not convex.  In fact, their absence characterizes convexity.  We first prove some intermediate results that make this completely clear.  Our first observation is immediate from the definition of convex side and is surprisingly useful.

\begin{observation}\label{obs:crossingK4}  If $J$ is such that $D[J]$ is a crossing $K_4$, and $T$ is a 3-cycle in $J$, then the side of $D[T]$ containing the fourth vertex in $J$ is not convex. \hfill\eop
\end{observation}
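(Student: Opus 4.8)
The plan is to unpack Definition \ref{df:convex} directly, with a little combinatorial bookkeeping about which edges of a crossing $K_4$ sit on a given $3$-cycle. Write $V(J) = \{a,b,c,d\}$ and, without loss of generality, suppose the unique crossing of $D[J]$ is between the edges $ab$ and $cd$. Let $T$ be a $3$-cycle of $J$ and let $z$ be the fourth vertex of $J$, i.e.\ the one not on $T$. Since the two crossing edges are independent, they split $\{a,b,c,d\}$ into two pairs, and exactly one of those pairs avoids $z$; hence exactly one of the two crossing edges, call it $e$, has both its ends on $T$ (so $e$ is one of the three edges of $T$), while the other crossing edge, call it $f$, joins $z$ to the remaining vertex $w$ of $T$.

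First I would fix the two sides: let $\Delta$ be the closed side of $D[T]$ containing $D[z]$ — well defined since $D[z]$ is a single point off $D[T]$ — and let $\Delta'$ be the other closed side. Because $D$ is a (good) drawing, $D[z]$ lies on no edge of $D$, so $D[z]$ is in the interior of $\Delta$, while trivially $D[w]\in D[T]\subseteq\Delta$. Thus, if $\Delta$ were a convex side of $T$, then applying the definition of convex side with $x=w$ and $y=z$ would force the whole edge $D[f]=D[wz]$ to lie in $\Delta$. The aim is to contradict this.

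Next I would look at the crossing point $p$ of $D[e]$ and $D[f]$. Since $D$ is good, $p$ lies in the relative interiors of both $D[e]$ and $D[f]$; as $D[e]\subseteq D[T]=\partial\Delta$, we get $p\in\partial\Delta$. Choose a neighbourhood $N$ of $p$ small enough that $N\cap D[T]=N\cap D[e]$, which is possible because $p$ is an interior point of the single edge $e$ of $T$ and $D$ is good. Then $D[e]$ separates $N$ into a piece lying in $\mathrm{int}\,\Delta$ and a piece lying in $\mathrm{int}\,\Delta'$, and since $D[f]$ genuinely crosses $D[e]$ at $p$ (rather than merely touching it), $D[f]\cap N$ meets both pieces. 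In particular $D[f]$ meets $\mathrm{int}\,\Delta'$, so $D[f]\not\subseteq\Delta$, contradicting the previous paragraph. Hence $\Delta$ is not a convex side of $T$, which is exactly the assertion, since $\Delta$ is the side of $D[T]$ containing the fourth vertex $z$.

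I do not expect a genuine obstacle: the only two points needing a touch of care are the combinatorial claim that exactly one crossing edge lies on $T$ while the other joins $w$ to $z$, and the local transversality argument showing $D[f]$ really passes from $\Delta$ into $\Delta'$ at $p$. Both are immediate consequences of $D$ being a good drawing, which matches the remark that the observation ``is immediate from the definition of convex side.''
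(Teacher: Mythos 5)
Your proof is correct and is exactly the argument the paper has in mind: the paper states the observation without proof (calling it immediate from the definition of a convex side), and your write-up simply makes explicit that the crossing edge from the fourth vertex $z$ to the opposite vertex $w$ of $T$ must leave the closed side $\Delta$ at its transversal crossing with the edge of $T$, while convexity of $\Delta$ would force $D[wz]\subseteq\Delta$ since $D[w]\in D[T]\subseteq\Delta$ and $D[z]\in\Delta$. Both delicate points you flag (the combinatorics of which crossing edge lies on $T$, and the local transversality at the crossing) are handled correctly.
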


We \redit{had} some difficulty deciding on the right definition of convexity.  At the level of individual 3-cycles, the definition  given in the introduction makes more sense.  At the level of a drawing being convex, there is a simpler one, as shown in the next lemma and, more particularly, its Corollary \ref{co:badSide}:  we only need to test single points in the closed disc $\Delta$ and how they connect to the three corners.

\begin{definition}\label{df:fourPointProperty}
Let $D$ be a drawing of $K_n$, let $T$ be a 3-cycle in $K_n$, and let $\Delta$ be a closed disc bounded by $D[T]$.  Then $\Delta$ has the {\em \fpp\/} if, for every vertex $v$ of $K_n$ not in $T$ such that $D[v]\in \Delta$, $D[T+v]$ is a non-crossing $K_4$.
\end{definition}

\begin{lemma}\label{lm:planarK4convex}  Let $D$ be a drawing of $K_5$ such that the side $\Delta$ of the 3-cycle $T$ has the \fpp.  Suppose $u$ and $v$ are vertices of $K_5$ such that $D[u],D[v]\in \Delta$.  If $D[uv]$ is not contained in $\Delta$, then there is a vertex $b$ of $T$ such that neither side of the 3-cycle induced by $u$, $v$, and $b$ satisfies the \fpp; in particular, neither side is convex.
\end{lemma}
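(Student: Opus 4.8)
The setup is a drawing $D$ of $K_5$ with vertex set $\{a_1,a_2,a_3,u,v\}$, where $T=a_1a_2a_3$, the side $\Delta$ of $D[T]$ has the Four Point Property, $D[u],D[v]\in\Delta$, but $D[uv]\not\subseteq\Delta$. First I would record what the Four Point Property gives us: since $D[u]\in\Delta$, the $K_4$ on $\{a_1,a_2,a_3,u\}$ is non-crossing, so $D[a_iu]\subseteq\Delta$ for each $i$; likewise $D[a_iv]\subseteq\Delta$ for each $i$. Because $D[T+u]$ is a non-crossing $K_4$, the three curves $D[a_iu]$ together with $D[T]$ partition $\Delta$ into three closed discs (the faces of $D[T+u]$ inside $\Delta$), each bounded by a 3-cycle through $u$; $D[v]$ lies in (at least) one of these, say the one bounded by $u a_j a_k$ — equivalently, $D[v]$ is separated from $a_i$ (the third corner) by the cycle $D[ua_ja_k]$. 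The idea is that this ``third corner'' $a_i$ will serve as the vertex $b$ in the statement.

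The heart of the argument is to analyze the $K_4$ on $\{u,v,a_j,a_k\}$ and the $K_4$ on $\{u,v,a_i,\cdot\}$. Consider the 3-cycle $T' = u v a_i$ (where $a_i$ is the claimed $b$). I need to show neither side of $D[T']$ has the Four Point Property; since there are only two other vertices of $K_5$, namely $a_j$ and $a_k$, this amounts to showing that on each side of $D[T']$ lies (the image of) one of $a_j,a_k$ with the corresponding $K_4$ crossing. Equivalently, by Observation~\ref{obs:crossingK4} it suffices to show that $D[T'+a_j]$ and $D[T'+a_k]$ are both crossing $K_4$'s and that $D[a_j]$, $D[a_k]$ lie on opposite sides of $D[T']$. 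To get the crossings: since $D[uv]\not\subseteq\Delta$, the curve $D[uv]$ leaves $\Delta$, so it crosses $D[T]$, hence (as $u$ and $v$ are not on $T$ and edges at a common vertex don't cross) $D[uv]$ crosses some edge $D[a_pa_q]$ of $T$. Combined with the non-crossing structure of $D[T+u]$ and $D[T+v]$ inside $\Delta$, a short case analysis on which edge $D[a_pa_q]$ is crossed — and on where $D[v]$ sits among the three faces of $D[T+u]$ — should pin down that $uv$ together with the two relevant triangle-edges forces crossing $K_4$'s on $\{u,v,a_i\}$ with both $a_j$ and $a_k$.

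More concretely, here is the mechanism I expect to use. Since $D[uv]$ exits $\Delta$ but both endpoints and all edges $D[a_iu],D[a_iv]$ stay inside, $D[uv]$ must cross the boundary edge $D[a_ja_k]$ (the side of $T$ ``facing'' the face containing $v$) — that is the natural candidate, and one should check the other cases ($D[uv]$ crossing $D[a_ia_j]$ or $D[a_ia_k]$) lead to a contradiction with the non-crossing $K_4$'s $D[T+u]$, $D[T+v]$, because $D[v]$ being in the face of $D[T+u]$ bounded by $u a_j a_k$ forces any arc from $v$ to outside $\Delta$ to first cross $D[ua_j]$, $D[ua_k]$, or $D[a_ja_k]$, and the first two are forbidden. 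So $D[uv]$ crosses $D[a_ja_k]$. Now the $4$-cycle $D[u a_j v a_k]$ (using edges $ua_j, a_j v, va_k, a_k u$, all inside $\Delta$) is a simple closed curve bounding two discs; $D[a_i]$ lies in one of them, and the crossing point of $D[uv]$ with $D[a_ja_k]$ together with the rest of $D[uv]$ tells us $D[uv]$ meets both discs, so the two pieces of $D[uv]$ cut off by the crossing go into opposite discs relative to... — at this point I want to conclude that $D[a_j]$ and $D[a_k]$ end up on opposite sides of $D[uva_i]$ and that each of $uva_ia_j$, $uva_ia_k$ is a crossing $K_4$, with the crossing being $uv \times a_i a_j$ and $uv \times a_i a_k$ respectively (one of these, actually $uv\times a_ja_k$, we already have, and rotational/planarity bookkeeping converts it). Then Observation~\ref{obs:crossingK4} applied to $T'=uva_i$ inside $D[T'+a_j]$ and inside $D[T'+a_k]$ says: the side of $D[T']$ containing $a_j$ is not convex, and the side containing $a_k$ is not convex; since these are opposite sides, neither side of $D[T']$ is convex, and a non-convex side certainly fails the Four Point Property (a crossing $K_4$ witnesses it). Taking $b=a_i$ finishes the proof.

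The main obstacle will be the planar case analysis in the last two paragraphs: verifying cleanly that $D[uv]$ must cross the ``far'' edge $D[a_ja_k]$ and not an edge incident to $a_i$, and then tracking, via the Jordan curve theorem applied to $D[ua_jva_k]$ and to $D[uva_i]$, exactly which $K_4$'s become crossing and that $a_j,a_k$ separate. I would organize this by (1) fixing the face $F$ of $D[T+u]$ in $\Delta$ containing $D[v]$, bounded by $u a_j a_k$; (2) noting the first edge of $D[T]$ or $D[T+u]$ that $D[uv]$ crosses on its way out of $\Delta$ must be $D[a_ja_k]$; (3) using the Jordan curve $D[ua_jva_k]$ to place $D[a_i]$ and to see that the sub-arc of $D[uv]$ from the crossing to $v$ lies in $F$ while the sub-arc from the crossing onward eventually leaves $\Delta$, forcing crossings of $D[uv]$ with $D[a_ia_j]$ or $D[a_ia_k]$ accounting for both; (4) concluding with Observation~\ref{obs:crossingK4}. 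Everything else is routine once this local picture is nailed down.
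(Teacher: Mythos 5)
Your overall plan is the right one, and it matches the paper's in its key choices: the distinguished vertex $b$ is the corner of $T$ not incident with the face of $D[T+u]$ containing $D[v]$, and the goal is to show that each side of $D[uvb]$ contains one of the two remaining corners together with a crossing in the corresponding $K_4$. Your deduction that $D[uv]$ must cross the edge $D[a_ja_k]$ bounding that face is also correct, as is the parity count showing $D[uv]$ crosses $D[T]$ exactly twice.

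The gap is in how you propose to witness the \emph{two} crossing $K_4$'s. You assert that $D[T'+a_j]$ and $D[T'+a_k]$ are crossing $K_4$'s ``with the crossing being $uv\times a_ia_j$ and $uv\times a_ia_k$ respectively.'' That cannot happen: $D[uv]$ meets $D[T]$ in exactly two points, one of which lies on $D[a_ja_k]$, so $D[uv]$ crosses at most one of $D[a_ia_j]$, $D[a_ia_k]$. The crossing $uv\times a_ja_k$ that you ``already have'' is of no use here, since $a_ja_k$ is an edge of neither 4-set $\{u,v,a_i,a_j\}$ nor $\{u,v,a_i,a_k\}$; there is no ``bookkeeping'' that converts it into a crossing of one of those $K_4$'s. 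In the paper's argument the second witness is a crossing of an entirely different type: writing the second crossed edge of $T$ as $a_ia_j$ (so $T'+a_j$ is crossing via $uv\times a_ia_j$), the $K_4$ on $\{u,v,a_i,a_k\}$ is shown to be crossing because $D[va_i]$ crosses $D[ua_k]$ --- a crossing not involving $uv$ at all. Establishing this takes an extra step your proposal does not contain: the \fpp\ forces $D[va_i]\subseteq\Delta$, so it must cross one of $D[ua_j]$, $D[ua_k]$, and a short Jordan-curve argument using the crossing point of $D[uv]$ with $D[a_ia_j]$ rules out $D[ua_j]$. You also still need to check that $D[a_j]$ and $D[a_k]$ lie on opposite sides of $D[T']$ (the paper does this by noting $D[a_ja_k]$ crosses $D[T']$ only on $D[uv]$). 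With those two missing pieces supplied, your outline closes up into essentially the paper's proof.
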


\begin{cproof}
Since $\Delta_T$ has the \fpp, neither $u$ nor $v$ is in $T$.  Because $D[u]$ and $D[v]$ are both on the same side of $D[T]$, $D[uv]$ crosses $D[T]$ an even number of times.    However, $D[uv]$ crosses each of the three sides of $D[T]$ at most once, so $D[uv]$ crosses $D[T]$ at most three times.  Thus, $D[uv]$ crosses $D[T]$ either 0 or 2 times. 

As $D[uv]$ is not contained in $\Delta_T$, $D[uv]$ crosses $D[T]$ a positive number of times.  We conclude they cross exactly twice.
Label the vertices of $T$ as $a$, $b$, and $c$ so that $D[uv]$ crosses both $D[ab]$ and $D[ac]$.

Since $D[T+u]$ is a non-crossing $K_4$, the three edges \augusteight{of $T+u$} incident with $u$ partition $\Delta_T$ into three faces, each incident with a different two of $a$, $b$, and $c$.  Because $D[uv]$ crosses $D[ab]$ and $D[ac]$, but not any of the three edges of $T+u$ incident with $u$, $v$ must be in one of the faces of $D[T+u]$ incident with $a$.  We choose the labelling so that $v$ is in the face of $D[T+u]$ incident with both $a$ and $c$.

The \fpp\ implies $D[vb]$ is contained in $\Delta_T$.  It must cross either $D[ua]$ or $D[uc]$.  To show that it crosses $D[uc]$, we assume by way of contradiction that it crosses $D[ua]$.    Let $\times$ be the point where $D[ab]$ crosses $D[uv]$.  Then $D[vb]$ must exit the region incident with $a$, $u$, and $\times$, but it cannot cross either $D[ab]$ or $D[uv]$, and it cannot cross $D[au]$ a second time. This contradiction shows $D[vb]$ crosses $D[uc]$.

Let $T'$ be the 3-cycle with vertices $u$, $v$, and $b$.    By the original labelling of $T$, $D[ac]$ crosses $D[uv]$.   The \fpp\ applied separately to $T$ with $u$ and $T$ with $v$ shows that no other edge of $D[T']$ is crossed by $D[ac]$.  Therefore, $D[c]$ is on the side of $D[T']$ not containing $D[a]$.

We know $D[ab]$ and $D[uv]$ cross, so the side of $D[T']$ containing $a$ does not satisfy the \fpp. 
We now know that $D[uc]$ crosses $D[vb]$.  This shows that the side of $D[T']$ containing $c$ does not satisfy the \fpp, so neither side of $D[T']$ has the \fpp, as required.
\end{cproof}

Obviously, if $\Delta$ is a convex closed disc bounded by $D[T]$, then $\Delta$ has the \fpp.  The following yields a kind of global converse.  

\begin{corollary}\label{co:badSide}
Let $D$ be a drawing of $K_n$ and, for each 3-cycle $T$ in $K_n$, let $\Delta_T$ be a closed disc bounded by $D[T]$.  Suppose, for each $T$, $\Delta_T$ has the \fpp.   Then each $\Delta_T$ is convex; in particular, $D$ is convex.
\end{corollary}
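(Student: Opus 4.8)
The plan is to prove, by contradiction, the stronger statement that each designated disc $\Delta_T$ is a convex side of $T$; since Definition~\ref{df:convex} says $D$ is convex exactly when every $3$-cycle has \emph{some} convex side, this yields the "in particular" clause for free. So suppose some $\Delta_T$ is not convex. I will extract a five-vertex configuration, apply Lemma~\ref{lm:planarK4convex} to it, and read off a $3$-cycle $T'$ whose designated disc $\Delta_{T'}$ violates the \fpp, contradicting the hypothesis.

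\textbf{Producing five distinct vertices.} Because $\Delta_T$ fails to be convex, by the definition of a convex side there are distinct vertices $u,v$ with $D[u],D[v]\in\Delta_T$ but $D[uv]\not\subseteq\Delta_T$. First I would rule out that $u$ or $v$ lies on $T$: if, say, $v\in V(T)$, then either $u\in V(T)$ and $D[uv]$ is an edge of $D[T]\subseteq\Delta_T$; or $u\notin V(T)$, in which case the \fpp\ of $\Delta_T$ makes $D[T+u]$ a non-crossing $K_4$, so the arc $D[uv]$ meets the simple closed curve $D[T]$ only at its endpoint $D[v]$, and hence lies in the same closed disc as the interior point $D[u]$, namely $\Delta_T$. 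Either way $D[uv]\subseteq\Delta_T$, a contradiction. Therefore $V(T)\cup\{u,v\}$ consists of five distinct vertices.

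\textbf{Reducing to $K_5$ and invoking the lemma.} Let $D'=D[K_5]$ be the subdrawing induced on $V(T)\cup\{u,v\}$. The engine of the argument is that the \fpp\ passes downward to induced subdrawings: for any $3$-cycle $S$ on these five vertices, a closed disc bounded by $D[S]$ that has the \fpp\ with respect to $D$ also has it with respect to $D'$, since the defining condition of Definition~\ref{df:fourPointProperty} merely quantifies over a subset of $V(K_n)$. In particular $\Delta_T$ is a side of $D'[T]$ with the \fpp\ in $D'$, while $D[u],D[v]\in\Delta_T$ and $D[uv]\not\subseteq\Delta_T$. Applying Lemma~\ref{lm:planarK4convex} to $D'$ produces a vertex $b$ of $T$ such that neither side of the $3$-cycle $T'$ on $\{u,v,b\}$ satisfies the \fpp\ in $D'$. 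But $\Delta_{T'}$ is one of those two sides; using downward inheritance once more in contrapositive form, its failure of the \fpp\ in $D'$ forces failure in $D$, contradicting the standing hypothesis. Hence no $\Delta_T$ can fail to be convex.

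\textbf{Where the care is needed.} There is no real obstacle here — all the content is in Lemma~\ref{lm:planarK4convex}. The two places to be careful are the elimination of the degenerate case that $u$ or $v$ lies on $T$ (needed so that we genuinely have five distinct vertices on which to form $D'$), and the bookkeeping that the \fpp\ is inherited by induced subdrawings in both directions, which is precisely what lets the five-vertex conclusion of Lemma~\ref{lm:planarK4convex} be transferred back to a statement about the designated discs of $D$ in $K_n$.
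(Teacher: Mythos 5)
Your proposal is correct and follows essentially the same route as the paper: assume some $\Delta_T$ is not convex, extract the witnesses $u,v$, and apply Lemma~\ref{lm:planarK4convex} to obtain a $3$-cycle on $u$, $v$, and a vertex of $T$ neither of whose sides has the \fpp, contradicting the hypothesis. The extra care you take with the degenerate case $u\in V(T)$ or $v\in V(T)$ and with the two directions of inheritance of the \fpp\ between $D$ and the induced $K_5$ is sound and simply makes explicit what the paper leaves implicit.
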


\begin{cproof}  
Suppose that there is a $T$ such that $\Delta_T$ is not convex.  Then there are vertices $u,v$ of $K_n$ such that both $D[u]$ and $D[v]$ are in $\Delta_T$, but $D[uv]$ is not contained in $\Delta_T$.  Lemma \ref{lm:planarK4convex} implies there is a vertex $b$ of $T$ such that the 3-cycle induced by $u$, $v$, and $b$ does not satisfy the \fpp.  This contradicts the hypothesis.
\end{cproof}

Let $D$ be a drawing of $K_n$, let $u$ be a vertex of $K_n$, and let $J$ \rbr{be} a complete subgraph of $K_n-u$.  If $D[J]$ is natural \augusteight{and $D[u]$ is in the face of $D[J]$ bounded by a $|V(J)|$-cycle}, then $u$ is {\em planarly joined to $J$\/} if no edge from $D[u]$ to $D[J]$ crosses any edge of $J$.

\begin{corollary}\label{co:twoPlanarToTriangle}
\augusteight{Let $D$ be a convex drawing of $K_5$ with vertices $u,v$ such that $D-\{u,v\}$ is the 3-cycle $T$.  If $D[u]$ and $D[v]$ are in the same face of $D[T]$ and $u$ and $v$ are both planarly joined to $T$, then $D[uv]$ is in the same face of $D[T]$ as $D[u]$ and $D[v]$.}
\end{corollary}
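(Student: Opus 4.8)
The plan is to recognize this as an essentially immediate consequence of Lemma~\ref{lm:planarK4convex}. Write $a,b,c$ for the vertices of $T$ and let $\Delta$ be the closed disc bounded by $D[T]$ whose interior contains both $D[u]$ and $D[v]$. First I would unwind the hypothesis that $u$ is planarly joined to $T$: by definition this says that none of $D[ua],D[ub],D[uc]$ crosses an edge of $T$, and since edges sharing an endpoint never cross in a good drawing, it follows that $D[T+u]$ is a non-crossing $K_4$; the same applies to $v$, so $D[T+v]$ is a non-crossing $K_4$ as well. Since $u$ and $v$ are the only vertices of $K_5$ outside $T$ and both $D[u]$ and $D[v]$ lie in $\Delta$, this is exactly the statement that $\Delta$ has the \fpp.

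With that in hand, the main step is to apply Lemma~\ref{lm:planarK4convex} to $D$, $T$, $\Delta$, $u$ and $v$. Suppose for contradiction that $D[uv]$ is not contained in $\Delta$. The lemma then produces a vertex $w\in\{a,b,c\}$ such that neither side of the $3$-cycle on $\{u,v,w\}$ satisfies the \fpp, and in particular neither side of that $3$-cycle is convex. But $D$ is a convex drawing, so every $3$-cycle of $K_5$ — including the one on $\{u,v,w\}$ — has a convex side. This contradiction shows $D[uv]\subseteq\Delta$, and since the endpoints $D[u],D[v]$ lie in the interior of $\Delta$, the edge $D[uv]$ lies in the same face of $D[T]$ as $D[u]$ and $D[v]$, which is what we want.

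The only place where anything has to be checked is the verification that $\Delta$ has the \fpp, and the point I would be careful about is precisely that ``planarly joined'' must be made to deliver genuinely non-crossing $K_4$'s $D[T+u]$ and $D[T+v]$ — this is the sole use of the hypothesis — after which the conclusion is a direct invocation of Lemma~\ref{lm:planarK4convex} together with the definition of convexity. I do not expect any genuine obstacle here; the substantive content of the corollary is entirely carried by Lemma~\ref{lm:planarK4convex}.
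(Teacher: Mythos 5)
Your proof is correct and follows essentially the same route as the paper: both verify that the side of $D[T]$ containing $D[u]$ and $D[v]$ has the \fpp\ (using that ``planarly joined'' forces $D[T+u]$ and $D[T+v]$ to be non-crossing $K_4$'s) and then invoke Lemma~\ref{lm:planarK4convex} together with convexity of $D$ to conclude $D[uv]\subseteq\Delta$. The paper phrases this by swapping in the $\{u,v\}$-side as the chosen side of $T$ and appealing to the global \fpp-implies-convex statement, while you argue by direct contradiction on the $3$-cycle $uvb$, but the substance is identical.
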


\begin{cproof}
\ignore{If not, then $D[uv]$ crosses $D[T]$ precisely twice.   Label $T$ so that the edges $ab$ and $ac$ are crossed.  We choose the labelling of $a$ and $b$ so that, as we traverse $uv$ from $D[u]$ to $D[v]$, we first cross $ab$.  

Because $uc$ does not cross $D[T]$, the closed curve consisting of $D[uv]$ from $u$ to the crossing $\times$ with $ac$, from there to $c$, and back to $u$ is simple and  separates $b$ and $v$. As $vc$ also does not cross $D[T]$,  $uc$ and $vb$ cross in $D$. 
Thus, the path $acuvb$ is $\badPF$.
}
\augusteight{Since $D$ is convex, for each 3-cycle $T'$ of $K_5$, $D[T']$ has a convex side $\Delta_{T'}$.  Replace $\Delta_{T}$ with the side of $D[T]$ that contains $D[u]$ and $D[v]$.  Then each of the chosen sides satisfies the \fpp.  Lemma \ref{lm:planarK4convex} implies the chosen sides are convex; in particular, $\Delta_T$ is a convex side of $D[T]$.  By definition, $D[uv]$ is contained in $\Delta_T$, as required.}
\end{cproof}

The following is a useful variation of non-convexity.

\begin{corollary}\label{co:twoBadSides}
A drawing $D$ of $K_n$ is not convex if and only if there exists a 3-cycle $T$ of $K_n$ such that, for each side $\Delta$ of $D[T]$, there is a vertex $v_{\Delta}$ not in $T$ such that $D[v_\Delta]\in \Delta$ and  $D[T+v_\Delta]$ is a crossing $K_4$.
\end{corollary}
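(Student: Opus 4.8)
The plan is to prove both directions by unwinding the definition of convexity together with Definition \ref{df:fourPointProperty} and the \fpp\ machinery already established. For the ``if'' direction, suppose such a $T$ exists. For each side $\Delta$ of $D[T]$ we have a vertex $v_\Delta$ with $D[v_\Delta]\in\Delta$ such that $D[T+v_\Delta]$ is a crossing $K_4$. Then $\Delta$ fails the \fpp, since $v_\Delta$ witnesses a crossing $K_4$. As this holds for both sides of $D[T]$, no closed disc bounded by $D[T]$ has the \fpp, so in particular no such disc is convex (a convex side always has the \fpp). Hence $T$ has no convex side, and $D$ is not convex.

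For the ``only if'' direction, suppose $D$ is not convex, so some 3-cycle $T$ has no convex side. I would first invoke Corollary \ref{co:badSide} in contrapositive form: since $D$ is not convex, it is \emph{not} the case that every 3-cycle has a closed disc bounded by it with the \fpp --- but I need this localized to the particular $T$ with no convex side, which requires a little care. The cleaner route: fix a 3-cycle $T$ with no convex side and consider its two sides $\Delta_1,\Delta_2$. Neither is convex. I claim each $\Delta_i$ in fact fails the \fpp. Indeed, if some $\Delta_i$ had the \fpp, then I want to conclude $\Delta_i$ is convex, contradicting the choice of $T$. To get that conclusion I would apply Lemma \ref{lm:planarK4convex}: if $\Delta_i$ had the \fpp\ but were not convex, there would be $u,v$ with $D[u],D[v]\in\Delta_i$ and $D[uv]\not\subseteq\Delta_i$, and the Lemma produces a vertex $b$ of $T$ such that the 3-cycle $T'$ on $\{u,v,b\}$ has neither side convex --- but this alone does not immediately contradict ``$T$ has no convex side,'' so I instead argue directly. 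Since $\Delta_i$ fails the \fpp, by Definition \ref{df:fourPointProperty} there is a vertex $v_i\notin T$ with $D[v_i]\in\Delta_i$ and $D[T+v_i]$ a crossing $K_4$. Setting $v_{\Delta_i}:=v_i$ gives exactly the stated structure.

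The one genuine gap in the above is justifying ``each $\Delta_i$ fails the \fpp'' from ``$T$ has no convex side.'' The main obstacle is thus establishing the local statement: a side $\Delta$ of a 3-cycle $T$ with the \fpp\ (in a drawing where every other 3-cycle is assigned \emph{some} disc) need not obviously be convex, because Lemma \ref{lm:planarK4convex} is stated for $K_5$ and Corollary \ref{co:badSide} assumes \emph{all} 3-cycles have \fpp\ discs. I would bridge this by the following observation: if $\Delta$ is a side of $T$ with the \fpp\ but $\Delta$ is not convex, pick witnesses $u,v$ as above; then within the subdrawing $D[K_5]$ on $T\cup\{u,v\}$, the side $\Delta\cap(\text{this subdrawing's disc})$ still has the \fpp\ (the \fpp\ only gets easier on fewer vertices), so Lemma \ref{lm:planarK4convex} applies to this $K_5$ and yields a vertex $b\in T$ such that $T'$ on $\{u,v,b\}$ has neither side satisfying the \fpp\ --- and in particular $D[T'+a]$ and $D[T'+c]$ (for the other two vertices $a,c$ of $T$) exhibit crossing $K_4$'s on \emph{both} sides of $T'$. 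Now restart the argument with $T'$ in place of $T$: $T'$ has two sides, each containing a vertex forming a crossing $K_4$, which is already the desired conclusion (with $T'$ as the witnessing 3-cycle). So either the original $T$ works directly, or $T'$ does. Either way we obtain a 3-cycle with the claimed property, completing the ``only if'' direction and hence the corollary.
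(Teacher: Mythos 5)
Your proof is correct, and both directions rest on the same underlying facts as the paper's, but your handling of the converse takes a detour the paper avoids. The paper's proof of the ``only if'' direction is a one-line appeal to the contrapositive of Corollary \ref{co:badSide}: if every 3-cycle had at least one side with the \fpp, one could select such a side for each 3-cycle and Corollary \ref{co:badSide} would force $D$ to be convex; hence non-convexity yields a 3-cycle \emph{both} of whose sides fail the \fpp, and failing the \fpp\ is literally the existence of the required $v_\Delta$. The localization you were worried about is not actually needed, because the corollary does not ask the witnessing 3-cycle to be one you identified in advance as having no convex side --- any 3-cycle with both sides failing the \fpp\ will do, and the global contrapositive hands you one. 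Your workaround --- starting from a specific non-convex 3-cycle $T$, and in the case where a side of $T$ has the \fpp\ but is not convex, passing to the $K_5$ on $T\cup\{u,v\}$ and applying Lemma \ref{lm:planarK4convex} to produce a different witnessing 3-cycle $T'$ (whose two sides contain the crossing-$K_4$ witnesses $a$ and $c$ from the lemma's proof) --- is sound: the \fpp\ does restrict correctly to the subdrawing, and the lemma's conclusion does give the required structure on both sides of $T'$. What your route buys is self-containedness (it re-derives the relevant content of Corollary \ref{co:badSide} in situ); what it costs is length and the small risk you flagged yourself, which evaporates once one notices that Corollary \ref{co:badSide} is already the exact global statement needed.
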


\begin{cproof}
\augusteight{Observation \ref{obs:crossingK4} shows that if $D$ is convex, then no such 3-cycle can exist.  Conversely, Corollary \ref{co:badSide} implies that some 3-cycle $T$ of $K_n$ does not have a side that satisfies the \fpp.  This implies that, for each side $\Delta$ of $D[T]$, there is a vertex $v_\Delta$ such that $D[T+v_\Delta]$ is a crossing $K_4$, as required.
}
\ignore{Suppose first that, for each 3-cycle $T$ of $K_n$, there is a side $\Delta_T$ such that, for every $v$ not in $T$ such that $D[v]\in \Delta_T$, we have $D[T+v]$ is a non-crossing $K_4$.  Then every 3-cycle has the \fpp.  By Corollary \ref{co:badSide}, $D$ is convex. 

Conversely, suppose $T$ is a 3-cycle such that $D[T]$ has sides $\Delta_1$ and $\Delta_2$ and, for $i=1,2$, there is a vertex $v_i$ not in $T$ such that $D[v_i]\in \Delta_i$ and $D[T+v_i]$ is a crossing $K_4$.  There is a vertex $w_i$ of $T$ such that $v_iw_i$ crosses the edge in $T-w_i$.  This crossing shows $\Delta_i$ is not a convex side of $D[T]$.  Thus, $D$ is not convex.  
}
\end{cproof}

\augusteight{Our final corollary of Lemma \ref{lm:planarK4convex}} shows that, in a convex drawing, any non-convex side a of 3-cycle is determined by a crossing $K_4$ containing the 3-cycle.

\begin{corollary}\label{co:determinedSide}
Let $D$ be a convex drawing of $K_n$ and let $\Delta$ be a closed disc bounded by a 3-cycle $T$ in $D$.  Then $\Delta$ is not convex if and only if there exists a vertex $w$ such that $D[w]\in \Delta$ and $D[T+w]$ is a crossing $K_4$.
\end{corollary}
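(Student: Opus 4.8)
The plan is to prove the two implications separately; each is a short deduction from results already established.

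Assume first that there is a vertex $w$ with $D[w]\in\Delta$ and $D[T+w]$ a crossing $K_4$. Put $J=T+w$. Since $D$ is a good drawing, $w\notin V(T)$ and $D[w]$ does not lie on $D[T]$, so $D[w]$ lies in the interior of exactly one of the two closed discs bounded by $D[T]$; as $D[w]\in\Delta$, that disc is $\Delta$. Hence $\Delta$ is the side of $D[T]$ containing the fourth vertex of $J$, and Observation \ref{obs:crossingK4} gives that $\Delta$ is not convex.

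For the converse I would argue by contraposition: suppose no vertex $w$ has $D[w]\in\Delta$ and $D[T+w]$ a crossing $K_4$, and deduce that $\Delta$ is convex. By Definition \ref{df:fourPointProperty}, this hypothesis says exactly that $\Delta$ has the \fpp. Using that $D$ is convex, choose for every 3-cycle $T'\ne T$ a convex side $\Delta_{T'}$ of $D[T']$; as already noted, a convex side has the \fpp. Thus, assigning $\Delta$ to $T$ and $\Delta_{T'}$ to each other 3-cycle $T'$ yields a family of closed discs, one bounded by each 3-cycle of $K_n$, all satisfying the \fpp. Corollary \ref{co:badSide} then implies every disc in this family is convex; in particular $\Delta$ is convex, as required.

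I do not anticipate a real obstacle. The only observation needed is that ``$\Delta$ has a crossing-$K_4$ witness'' is precisely the negation of ``$\Delta$ has the \fpp'', after which Observation \ref{obs:crossingK4} and Corollary \ref{co:badSide} supply the two directions. The single point deserving care is the identification, in the first implication, of $\Delta$ with the side of $D[T]$ that contains $w$, which rests only on goodness of the drawing.
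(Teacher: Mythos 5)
Your proof is correct and follows essentially the same route as the paper's: the forward direction is Observation \ref{obs:crossingK4}, and the converse rests on the \fpp\ machinery coming from Lemma \ref{lm:planarK4convex}. The only (cosmetic) difference is that you argue the converse by contraposition, packaging $\Delta$ together with convex sides of all other 3-cycles and invoking Corollary \ref{co:badSide} as a black box (exactly as the paper does for Corollary \ref{co:twoPlanarToTriangle}), whereas the paper takes non-convexity witnesses $u,v$, reduces to the case where $\Delta$ has the \fpp\ and neither witness lies in $T$, and applies Lemma \ref{lm:planarK4convex} directly to contradict convexity of $D$.
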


\begin{cproof}
If there is such a vertex $w$, then evidently $\Delta$ is not convex.  Conversely, suppose $\Delta$ is not convex.  By definition, there exist vertices $u,v$  such that $D[u],D[v]\in\Delta$ but $D[uv]\not\subseteq \Delta$.  If either $u$ or $v$ is in $T$, then we are done, so we may assume neither $u$ nor $v$ is in $T$.  Moreover, we may assume that \augusteight{$\Delta$ has the \fpp,}  as otherwise we are done.   But now Lemma \ref{lm:planarK4convex} implies there is a vertex $b$ of $T$ such that the 3-cycle $T'$ induced by $u$, $v$, and $b$ has both sides not satisfying the \fpp.  Thus, neither side of $T'$ is convex, contradicting the hypothesis that $D$ is convex.
\end{cproof}

We came to the concept of convexity by considering drawings of $K_n$ without the two drawings $\badTkF$ and $\badFkF$ (see Figure \ref{fg:examples}) of $K_5$ for reasons that have been subsumed by some of the developments described in this article.  Since the remaining drawings of $K_5$ are rectilinear, we think of such drawings of $K_n$ as locally rectilinear.    Our next result is the surprising equivalence with convexity and this led us to consider convexity and its strengthenings to h- and f-convex.  

\begin{lemma}\label{lm:badK5's}  
A drawing $D$ of $K_n$ is convex if and only if, for every subgraph $J$ of $K_n$ isomorphic to $K_5$, $D[J]$ is not isomorphic to either $\badTkF$ or $\badFkF$.
\end{lemma}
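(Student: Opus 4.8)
The plan is to prove both directions by relating convexity to the Four Point Property for triangles inside $K_5$'s, using the machinery already established. For the forward direction, I would argue the contrapositive: suppose some $D[J]\cong K_5$ is isomorphic to $\badTkF$ or $\badFkF$, and show $D$ is not convex. This is easy once I observe that each of these two drawings of $K_5$ is itself non-convex as a drawing of $K_5$ — indeed, in $\badFkF$ every $K_4$ is crossing, and in $\badTkF$ one checks directly (or invokes Corollary \ref{co:twoBadSides}) that there is a 3-cycle $T$ with a crossing $K_4$ on each side. Since non-convexity of a subdrawing is inherited — a 3-cycle $T$ of $J$ with no convex side in $D[J]$ still has no convex side in $D$, because any vertex $w$ with $D[T+w]$ a crossing $K_4$ inside $D[J]$ remains such a vertex in $D$ — we conclude $D$ is not convex. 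The only genuine content here is checking that these two specific drawings of $K_5$ are non-convex, which is a finite case check (or follows by noting they are the non-rectilinear $K_5$'s and every rectilinear drawing is easily seen to be convex, so by the equivalence we are trying to prove... — to avoid circularity I will instead just verify non-convexity of $\badTkF$ and $\badFkF$ directly from the figure).

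For the converse, suppose $D$ is not convex; I must produce a subgraph $J\cong K_5$ with $D[J]\cong\badTkF$ or $\badFkF$. By Corollary \ref{co:badSide}, there is a 3-cycle $T$ such that neither closed disc bounded by $D[T]$ has the Four Point Property. So for each side $\Delta$ of $D[T]$ there is a vertex $v_\Delta\notin T$ with $D[v_\Delta]\in\Delta$ and $D[T+v_\Delta]$ a crossing $K_4$. Let $u$ and $v$ be the two such vertices, one on each side. Then $J := T + u + v$ is a $K_5$ in which both $T+u$ and $T+v$ are crossing $K_4$'s and $D[u]$, $D[v]$ lie on opposite sides of $D[T]$. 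The remaining step is to determine which of the five drawings of $K_5$ this $J$ can be: since $D[J]$ contains a 3-cycle $T$ whose two sides each carry a crossing $K_4$, $D[J]$ cannot be rectilinear (a rectilinear $K_5$, being convex, has a convex side for every triangle). Hence $D[J]$ is $\badTkF$ or $\badFkF$, which is exactly what we want.

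The main obstacle is making the converse airtight without hand-waving about "the five drawings of $K_5$." Concretely, I need the fact that a drawing of $K_5$ is rectilinear if and only if it is convex (equivalently: among the five good drawings of $K_5$, exactly the three with at most one crossing are convex, and these are exactly the rectilinear ones). One clean way to get this is: every rectilinear drawing is f-convex (stated in the introduction) hence convex; conversely, a non-rectilinear $K_5$ is homeomorphic to $\badTkF$ or $\badFkF$, and I verify by inspection of Figure \ref{fg:examples} that each of those has a triangle violating the convex-side condition. Thus "non-convex $K_5$ subdrawing" and "$\badTkF$ or $\badFkF$ subdrawing" coincide, and combining this with Corollary \ref{co:badSide} / Corollary \ref{co:twoBadSides} gives both implications. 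An alternative, more self-contained route for the converse avoids enumerating $K_5$'s altogether: take the 3-cycle $T$ and vertices $u,v$ as above and directly trace the edges of $D[J]$ — knowing $D[u],D[v]$ are on opposite sides of $D[T]$, that $ua', ua''$ patterns are forced by $T+u$ being a crossing $K_4$, and similarly for $v$ — to identify $D[J]$ as one of the two bad drawings; but this is more tedious and I expect the authors prefer the short argument via the rectilinear-equals-convex fact for $K_5$.
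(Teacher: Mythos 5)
Your proposal is correct and follows essentially the same route as the paper: a direct check that $\badTkF$ and $\badFkF$ each contain a 3-cycle with no convex side (which persists in any larger drawing), and, for the converse, extraction via Corollary~\ref{co:badSide}/Corollary~\ref{co:twoBadSides} of a non-convex $K_5$, which cannot be rectilinear because rectilinear drawings of $K_5$ are convex by the \fpp. One small caution: your parenthetical that $\badFkF$ is non-convex ``indeed because every $K_4$ is crossing'' is not by itself a valid reason (the natural $K_5$ also has all $K_4$'s crossing yet is convex --- one also needs a triangle separating the two remaining vertices), but your stated fallback to a direct verification from the figure is exactly what the paper does.
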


\begin{cproof}  \augusteight{In the drawing of $\badTkF$ in Figure \ref{fg:examples}, we see that a 3-cycle consisting of one of the edges that is not a straight segment together with the longer horizontal edge has no convex side.  In the drawing of $\badFkF$, there are two 3-cycles that have the ``interior vertex" in their interiors.  Neither of these 3-cycles is convex.  Thus, these two drawings of $K_5$ cannot occur in a convex drawing of $K_n$.}

\augusteight{Conversely, in a rectilinear drawing of $K_5$, the bounded side of each 3-cycle has the \fpp.  Thus, Corollary \ref{co:badSide} shows a rectilinear drawing of $K_5$ is convex.  On the other hand, Corollary \ref{co:twoBadSides} shows every non-convex drawing of $K_n$ contains a non-convex drawing of $K_5$.  Such a drawing is either $\badTkF$ or $\badFkF$.}
\end{cproof}

A further perusal of the five drawings of $K_5$ shows that the following further refinement of forbidden substructures is possible.  This configuration was mentioned at Crossing Number Workshop 2015 (Rio de Janeiro) in the context of being one forbidden configuration for a drawing of an arbitrary graph to be pseudolinear.

\begin{figure}[!ht]
\begin{center}
\includegraphics[scale=1]{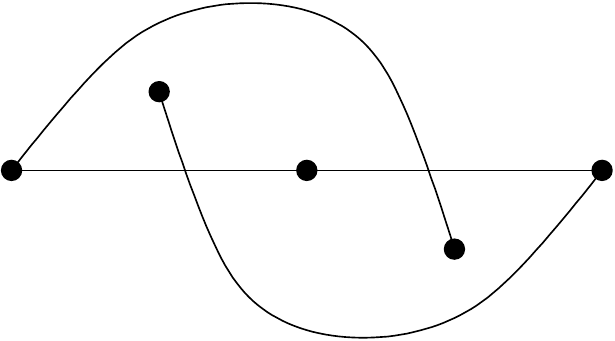}
\caption{The drawing $\badPF$.}\label{fg:tildeP4}
\end{center}
\end{figure}

\begin{lemma}\label{lm:oneBadConfiguration}
Let $D$ be a drawing of $K_n$.  Then $D$ is convex if and only if, for every path $P$ of length 4, $D[P]$ is not isomorphic to $\badPF$.
\end{lemma}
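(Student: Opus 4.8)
By Lemma~\ref{lm:badK5's}, a drawing of $K_n$ is convex exactly when none of its $K_5$-subdrawings is $\badTkF$ or $\badFkF$, so I would prove the lemma by connecting these two forbidden $K_5$'s with the single forbidden path $\badPF$. One direction is then a finite check: if $D$ is not convex, Lemma~\ref{lm:badK5's} gives a $K_5$-subgraph $J$ with $D[J]$ isomorphic to $\badTkF$ or to $\badFkF$, and comparing these drawings (Figure~\ref{fg:examples}) with $\badPF$ (Figure~\ref{fg:tildeP4}) shows that each of $\badTkF$ and $\badFkF$ contains a path of length 4 whose induced subdrawing is $\badPF$; hence $D$ contains such a path. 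This is the same kind of inspection used in the proof of Lemma~\ref{lm:badK5's}.

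\textbf{The converse.} For the other direction I would show directly that a path drawn as $\badPF$ forces non-convexity, without revisiting the five drawings of $K_5$. Let $P=p_1p_2p_3p_4p_5$ be a path of length 4 with $D[P]\cong\badPF$; reading $\badPF$ off Figure~\ref{fg:tildeP4}, the only crossings among the four edges of $D[P]$ are $\times_1:=D[p_1p_2]\cap D[p_3p_4]$ and $\times_2:=D[p_2p_3]\cap D[p_4p_5]$, so in particular $D[p_1p_2]$ and $D[p_4p_5]$ are disjoint. Let $\gamma$ be the simple closed curve $D[T]$, where $T$ is the 3-cycle on $p_2,p_3,p_4$. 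Since $D[\{p_1,p_2,p_3,p_4\}]$ contains the crossing $\times_1$ it is a crossing $K_4$, so Observation~\ref{obs:crossingK4}, applied to its 3-cycle $T$, tells us the side of $\gamma$ containing $D[p_1]$ is not convex; applying it instead to the crossing $K_4$ $D[\{p_2,p_3,p_4,p_5\}]$ and the same 3-cycle $T$, the side of $\gamma$ containing $D[p_5]$ is not convex. It therefore suffices to show that $D[p_1]$ and $D[p_5]$ lie on \emph{different} sides of $\gamma$, for then no side of $\gamma$ is convex and $D$ is not convex.

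\textbf{The hard step.} To prove that, I would assume for contradiction that $D[p_1]$ and $D[p_5]$ lie in the same open side of $\gamma$, and let $\Delta$ be the closed disc bounded by $\gamma$ on the other side. Since $D[p_1p_2]$ shares $p_2$ with $D[p_2p_3]$ and with $D[p_2p_4]$ and meets $D[p_3p_4]$ only in $\times_1$, it crosses $\gamma$ exactly once; hence the subarc $A_1$ of $D[p_1p_2]$ from $\times_1$ to $p_2$ is a properly embedded arc of the disc $\Delta$, and likewise the subarc $A_2$ of $D[p_4p_5]$ from $p_4$ to $\times_2$ is a properly embedded arc of $\Delta$. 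The endpoints $p_2$ and $\times_1$ of $A_1$ split $\gamma$ into the arc through $p_3$, which contains all of $D[p_2p_3]$ and so contains $\times_2$, and the arc through $p_4$; thus $A_1$ cuts $\Delta$ into two discs with common boundary $A_1$, one containing $\times_2$ but not $p_4$ and the other containing $p_4$ but not $\times_2$. Since $A_2$ is connected, joins $p_4$ to $\times_2$, and lies in $\Delta$, it must meet $A_1$ --- but $A_1\subseteq D[p_1p_2]$ and $A_2\subseteq D[p_4p_5]$ are disjoint, a contradiction.

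\textbf{Where the difficulty lies.} The two uses of Observation~\ref{obs:crossingK4} are immediate, so the real content is the hard step: ruling out $D[p_1]$ and $D[p_5]$ being on the same side of $D[T]$. This is precisely where the full crossing pattern of $\badPF$ matters --- specifically the disjointness of $D[p_1p_2]$ and $D[p_4p_5]$; merely knowing that ``$D[P]$ realizes the two crossings $\times_1$ and $\times_2$'' would not be enough. The easy direction, by contrast, rests entirely on the already-established Lemma~\ref{lm:badK5's} and an inspection of Figures~\ref{fg:examples} and~\ref{fg:tildeP4}.
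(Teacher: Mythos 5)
Your proof is correct and takes essentially the same route as the paper: one direction reduces to Lemma~\ref{lm:badK5's} together with the check that $\badTkF$ and $\badFkF$ each contain $\badPF$, and the converse uses the same 3-cycle on $p_2,p_3,p_4$, with the crossings of $D[p_1p_2]$ and $D[p_4p_5]$ on that 3-cycle ruling out both of its sides as convex. The only divergence is in the ``hard step'': the paper deduces that $D[p_1]$ and $D[p_5]$ lie on opposite sides by noting $D[p_2p_4]$ crosses no edge of $P$ and hence sits in the face of $D[P]$ incident with both $p_2$ and $p_4$, then reading the separation off the figure, whereas you give a self-contained topological argument with the arcs $A_1$ and $A_2$; both are valid.
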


\begin{cproof}
It is routine to verify that each of $\badTkF$ and $\badFkF$ \redit{contains} the configuration $\badPF$.   Conversely, if $\badPF$ is present in $D$ for the path $P$, then let $u$ and $v$ be the ends of $P$ and $u'$ and $v'$ their neighbours in $P$.   Each edge of $P$ is incident with either $u'$ or $v'$, so $D[u'v']$ does not cross any edge of $P$.  Therefore, $D[u'v']$ is in the face of $D[P]$ incident with both $u'$ and $v'$.

Let $T$ be the 3-cycle $(P-\{u,v\})+u'v'$.  Then $D[u]$ and $D[v]$ are on opposite sides of $D[T]$.  Since $D[uu']$ and $D[vv']$ both cross $D[T]$, neither side of $D[T]$ is convex, showing $D$ is not convex.
\end{cproof}

\ignore{
Let $D$ be a drawing of $K_n$, let $u$ be a vertex of $K_n$, and let $J$ a complete subgraph of $K_n-u$.  If $D[J]$ is natural \augusteight{and $D[u]$ is in the face of $D[J]$ bounded by a $|V(J)|$-cycle}, then $u$ is {\em planarly joined to $J$\/} if no edge from $D[u]$ to $D[J]$ crosses any edge of $J$.

\begin{corollary}\label{co:twoPlanarToTriangle}
Let $D$ be a convex drawing of $K_5$ with vertices $u,v$ such that $D-\{u,v\}$ is the 3-cycle $T$.  If $D[u]$ and $D[v]$ are in the same face of $D[T]$ and $u$ and $v$ are both planarly joined to $T$, then $D[uv]$ is in the same face of $D[T]$ as $D[u]$ and $D[v]$.
\end{corollary}

\begin{cproof}
If not, then $D[uv]$ crosses $D[T]$ precisely twice.   Label $T$ so that the edges $ab$ and $ac$ are crossed.  We choose the labelling of $a$ and $b$ so that, as we traverse $uv$ from $D[u]$ to $D[v]$, we first cross $ab$.  

Because $uc$ does not cross $D[T]$, the closed curve consisting of $D[uv]$ from $u$ to the crossing $\times$ with $ac$, from there to $c$, and back to $u$ is simple and  separates $b$ and $v$. As $vc$ also does not cross $D[T]$,  $uc$ and $vb$ cross in $D$. 
Thus, the path $acuvb$ is $\badPF$.
\end{cproof}
}

\augusteight{We will use the following observation in Section \ref{sec:K9}}.  Its proof, left to the reader, is a good exercise in using the fact that no two closed edges can have two points in common.

\begin{observation}\label{obs:3crossedTriang}  Let $D$ be a drawing of $K_5$ in which some 3-cycle is crossed three times by a single edge.  Then $D$ is $\badFkF$ (as in Figure \ref{fg:examples}). \hfill\eop \end{observation}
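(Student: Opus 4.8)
The plan is to use the hypothesis to force $D$ to be non-convex while having more than three crossings, and then to read off $D=\badFkF$ from Lemma~\ref{lm:badK5's} together with the crossing counts of $\badTkF$ and $\badFkF$.

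First I would pin down the edge doing the crossing. An edge incident with a vertex of the $3$-cycle $T$ is adjacent to two of the three edges of $T$, so in a good drawing it meets $D[T]$ at most once; hence the edge crossing $T$ three times must be vertex-disjoint from $T$. Since $K_5$ has exactly two vertices, say $u$ and $v$, outside $T$, that edge is $uv$. As $uv$ is independent from each edge of $T$, $D[uv]$ crosses each edge of $T$ exactly once, so it crosses the simple closed curve $D[T]$ an odd number of times, and therefore $D[u]$ and $D[v]$ lie in different sides of $D[T]$, say $D[u]\in\Delta_1$ and $D[v]\in\Delta_2$.

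The main step is to show that $D[T+u]$ is a crossing $K_4$ (and then, by the symmetry of $u$ and $v$, so is $D[T+v]$). Suppose $D[T+u]$ were planar. Then none of $D[ua], D[ub], D[uc]$ crosses $D[T]$, so all three lie in $\Delta_1$ and, together with $D[T]$, partition $\Delta_1$ into three regions, each bounded by two of the edges at $u$ and one edge of $T$. Trace $D[uv]$ starting at $D[u]$: it enters one of these regions, and since it cannot cross either of the two edges at $u$ bounding that region, it must leave the region through the incident edge of $T$; after this first crossing with $D[T]$ it lies in $\Delta_2$, so to cross $D[T]$ a second time it must re-enter $\Delta_1$ through one of the other two edges of $T$, landing in one of the other two regions; but the boundary of that region consists of two edges at $u$ and an edge of $T$ that has already been crossed, so $D[uv]$ can neither cross $D[T]$ a third time nor reach $D[v]\in\Delta_2$ -- a contradiction. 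I expect this tracing argument to be the only real obstacle, and even that is short; the care required is just bookkeeping of which edges are adjacent to $D[uv]$ and the fact that independent edges cross at most once.

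To conclude, apply Observation~\ref{obs:crossingK4} to the crossing $K_4$'s $D[T+u]$ and $D[T+v]$: the side of $D[T]$ containing $u$ and the side of $D[T]$ containing $v$ are both non-convex, so $T$ has no convex side and $D$ is not convex. By Lemma~\ref{lm:badK5's}, $D$ is one of $\badTkF$ and $\badFkF$. But $D$ has at least four crossings -- the three crossings of $D[uv]$ with the edges of $T$, together with the crossing inside $D[T+u]$, which involves an edge at $u$ and hence is none of those three -- whereas $\badTkF$ has only three crossings. Therefore $D=\badFkF$.
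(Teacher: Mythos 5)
Your argument is correct. Note that the paper does not actually supply a proof of this observation --- it is explicitly left to the reader, with the hint that it is an exercise in the fact that two closed edges share at most one point. Your proof is a legitimate solution, and it does lean on exactly that fact at every turn (to show the crossing edge is $uv$, that it meets each edge of $T$ exactly once, and to run the tracing argument showing $D[T+u]$ and $D[T+v]$ are crossing $K_4$'s). Where you diverge from the ``direct'' exercise the authors presumably have in mind is at the end: rather than pinning down the routings of the remaining six edges to exhibit $\badFkF$ explicitly, you invoke Observation~\ref{obs:crossingK4} to conclude $T$ has no convex side, then Lemma~\ref{lm:badK5's} to reduce to $\{\badTkF,\badFkF\}$, and finally rule out $\badTkF$ by counting a fourth crossing. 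This is shorter and cleaner, at the cost of importing the classification of the five drawings of $K_5$ (hidden inside Lemma~\ref{lm:badK5's}); since the observation is stated after that lemma in the paper, there is no circularity, and the crossing-count step (the crossing of $D[T+u]$ involves an edge $ua$, $ub$, or $uc$ and so is disjoint from the three crossings on $D[uv]$) is airtight.
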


\section{Convexity and natural drawings of $K_n$}\label{sec:natural}

\redit{We recall from Section~\ref{sec:intro} that} a {\em natural drawing\/} of $K_n$ is a \redit{drawing in} which an $n$-cycle bounds a face $\Gamma$.  It is easy to see that, in any natural drawing of $K_n$,  every 3-cycle $T$ has a side $\Delta_T$ that is disjoint from $\Gamma$ and there is no vertex of $K_n$ in the interior of $\Delta_T$.  Thus, $\Gamma$ and the $\Delta_T$ show that a natural drawing of $K_n$ is f-convex.

In this section, we show that if $D$ is a convex drawing of $K_n$ with the maximum number $\binom n4$ \redit{of} crossings, then $D$ is a natural drawing of $K_n$.  This leads us to a structure theorem for convex drawings of $K_n$ whose central piece is, for some $r\ge 4$, a natural drawing of $K_r$.  It also leads to the Erd\H os-Szekeres Theorem for convex drawings:  for every $r\ge 5$, if $n$ is sufficiently large, then every convex drawing of $K_n$ contains a natural $K_r$.

\begin{lemma}\label{lm:naturalCharacterization}  Let $D$ be a drawing of $K_n$.  Then $D$ is a convex drawing of $K_n$ with $\binom n4$ crossings if and only if $D$ is a natural drawing of $K_n$.
\end{lemma}

\begin{cproof}
From the remarks preceding the lemma, it suffices to assume $D$ is a convex drawing of $K_n$ with $\binom n4$ crossings and show that $D$ is a natural drawing of $K_n$.

We proceed by induction, with the base case $n=4$ being trivial.  Suppose now that $n> 4$.  Let $v$ be any vertex of $K_n$ and apply the inductive assumption to $K_n-v$, so $D[K_n-v]$ has a Hamilton cycle $H$ bounding a face $\Gamma$ of $D[K_n-v]$.   Every 3-cycle in $K_n-v$ has a convex side disjoint from $\Gamma$.

Suppose by way of contradiction that $D[v]$ is on the side of $D[H]$ disjoint from $\Gamma$.  Then $D[v]$ is in the convex side of some 3-cycle $T$ of $K_n-v$ and convexity implies $D[T+v]$ is non-crossing, a contradiction.

Therefore, $D[v]$ is in $\Gamma$.     
We start by noting the following.

\medskip\noindent{\bf Fact 1.}  {\em Every edge incident with $v$ crosses at most one edge of $H$.}

\medskip Suppose \rbrnew{the first edge of $H$ crossed by $vx$ is} $cd$.   Let $J$ be the $K_4$ induced by $c$, $d$, $x$, and $v$.  Then $vx$ crossing $cd$ is the only crossing of $D[J]$, so $vx$ cannot cross $J-x$ a second time.  In particular, $D[vx]$ does not cross $H$ a second time.

\medskip 
The next fact is the main part of the proof.

\medskip\noindent{\bf Fact 2.}  {\em All edges incident with $v$ that cross an edge of $H$ cross the same edge of $H$.}

\medskip In the alternative, there are edges $vw$ and $vx$ crossing edges $ab$ and $cd$ of $H$ such that $ab\ne cd$.  We may use the symmetry between $a$ and $b$ to suppose that $cd$ is in the $aw$-subpath of $H-ab$.     Because $vx$ does not cross $vw$, Fact 1 implies $x$ is also in the $aw$-subpath of $H-ab$.  Also, $vw\ne vx$ implies $x\ne w$.

Assume first that $x\ne a$.  
Let $J$ be the $K_4$ induced by $v$, $b$, $x$, and $w$ and let $T$ be the 3-cycle $J-b$.  Then, because $a$ and $b$ are on different sides of $D[T]$ and $D[bx]$ crosses $D[vw]$,  $D[a]$ is on the convex side of $D[T]$, showing $D[T+a]$ is a non-crossing $K_4$.  This contradiction shows $x=a$.

Because $x=a$ and $vx$ crosses $cd$,  $c\ne a$ and $d\ne a$.  We may choose the labelling so that $c$ is nearer to $a$ in the $aw$-subpath of $H-ab$ than $d$ is.  In this case, let $T'$ be the 3-cycle induced by $v$, $a$, and $w$.  Then $ba$ shows that $b$ is not in the convex side of $D[T']$.  Therefore, the convex side  is the side containing $D[c]$, showing that $D[T'+c]$ is a non-crossing $K_4$.  This contradiction completes the proof of Fact 2.

\medskip Since some edge incident with $v$ must cross an edge of $H$, there is a unique edge $ab$ of $H$ crossed by edges incident with $v$.  In particular, $va$ and $vb$ do not cross $H$ in $D$.  If $c$ were a third vertex such that $vc$ does not cross $H$ in $D$, then we would have the non-crossing $K_4$ induced by $v$, $a$, $b$, and $c$.  It follows that every edge incident with $v$ other than $va$ and $vb$ cross $ab$ in $D$.  Thus, $H'=(H-ab)+\{va,vb\}$ is the required Hamilton cycle in $K_n$ that shows $D$ is a natural drawing of $K_n$.
\end{cproof}

We can now easily prove the convex version of the Erd\H os-Szekeres Theorem.  We suppose $r\ge 5$ is an integer and choose $n$ large enough so that some subset of $V(K_n)$ of size $r$ is such that, for each $K_4$ in the $K_r$, the $K_4$ has a crossing.  (For $r\ge 5$, they cannot all be non-crossing.)  If the drawing $D$ of $K_n$ is convex, Lemma \ref{lm:naturalCharacterization} implies $D[K_r]$ is natural.  We state the theorem here for reference.

\begin{theorem}\label{th:convexE-S}  Let $r\ge5$ be an integer.  Then there is an integer $N=N(r)$ such that, if $n\ge N$ and $D$ is a convex drawing of $K_n$, then there is a subgraph $J$ of $K_n$ isomorphic to $K_r$ such that $D[J]$ is a natural $K_r$. \hfill\eop
\end{theorem}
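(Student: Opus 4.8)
The theorem is a Ramsey-type statement: choose $n$ so large that every drawing of $K_n$ contains a $K_r$ all of whose $K_4$-subgraphs are crossing, then invoke Lemma~\ref{lm:naturalCharacterization} to conclude that in a convex drawing such a $K_r$ is forced to be natural. So the proof splits into two independent pieces. First, the Ramsey input: I would 2-colour the $4$-element subsets of $V(K_n)$ according to whether the induced $K_4$ crosses or not, and apply the hypergraph Ramsey theorem to obtain, for $n$ large, a set $S$ of $r$ vertices all of whose $4$-subsets receive the same colour. It remains to rule out the monochromatic ``non-crossing'' colour: if every $K_4$ inside $D[S]$ is non-crossing then $D[S]$ is a planar drawing of $K_r$, which is impossible for $r\ge 5$ since $K_5$ is non-planar. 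Hence the monochromatic class must be the ``all $K_4$'s crossing'' one.

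**Finishing via Lemma~\ref{lm:naturalCharacterization}.** Once we have a convex drawing $D$ of $K_n$ and a subgraph $J\cong K_r$ with every $K_4$ in $J$ crossing, the restriction $D[J]$ is itself a convex drawing of $K_r$ (convexity is clearly inherited by taking the induced drawing on a subset of vertices — one just keeps the same convex sides, intersected with the relevant discs). The number of crossings of $D[J]$ is exactly $\binom r4$, since each of the $\binom r4$ four-element subsets contributes exactly one crossing and, in a good drawing, no crossing is counted twice (a crossing involves four distinct vertices). Lemma~\ref{lm:naturalCharacterization} then immediately gives that $D[J]$ is a natural drawing of $K_r$, which is the conclusion.

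**Where the work is.** The genuinely new content of this theorem is entirely in Lemma~\ref{lm:naturalCharacterization}, which is already proved; the present argument is a short assembly. The only mild subtleties are: (i) verifying that convexity is hereditary under passing to induced subdrawings, which is immediate from the definition of a convex side; and (ii) checking that ``every $K_4$ crossing'' forces precisely $\binom r4$ crossings — this uses that the drawing is good, so distinct crossings cannot share a pair of edges and each crossing determines its four endpoints. Neither of these is a real obstacle. If anything, the step requiring the most care is making explicit the value of $N(r)$, but the statement only asserts existence, so the Ramsey bound suffices and no optimization is needed.

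**Remark on the exposition.** Since the paragraph immediately preceding the theorem statement already sketches exactly this argument, the proof can legitimately be left as an ``\hfill\eop'' after that paragraph, as the authors have done; a written-out version would simply expand the two paragraphs above into three or four sentences.
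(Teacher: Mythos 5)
Your proposal is correct and follows essentially the same route as the paper: the paragraph preceding Theorem~\ref{th:convexE-S} gives exactly this argument --- a Ramsey colouring of $4$-sets by crossing/non-crossing, ruling out the non-crossing class because $K_5$ is non-planar, and then applying Lemma~\ref{lm:naturalCharacterization} to the resulting $K_r$ with $\binom{r}{4}$ crossings. Your added checks (heredity of convexity under induced subdrawings, and that ``every $K_4$ crossing'' yields exactly $\binom{r}{4}$ crossings in a good drawing) are the right ones and are unproblematic.
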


\redit{We remark that this statement also follows from~\cite[Theorem 1.2]{pst}; see the third remark in Section~\ref{sec:future}}.

The remainder of this section is devoted to the following structure theorem for convex drawings.  Indeed, we show that, for $n\ge 5$, every convex drawing of $K_n$ consists of a natural $K_r$ (for some $r\ge 4$), vertices $S$ in the crossing side of the $K_r$, and every other point is in the face $\Gamma$ of the $K_r$ bounded by the $r$-cycle.  These other points are joined to each other and to the vertices of the $K_r$ in $\Gamma$.  This somewhat surprisingly straightforward fact has some interesting applications, especially to h-convex drawings.

Let $D$ be a convex drawing of $K_n$ and, for some $r\ge 4$, let $J$ be a $K_r$ in $K_n$ such that $D[J]$ is natural.  We set $C_J$ to be the facial $r$-cycle in $D[J]$.  We refer to the face of $D[J]$ bounded by $C_J$ as the {\em outside\/} of $J$ and the other side of $C_J$ as the {\em inside\/} of $J$.

The proof uses the following elementary observations that are somewhat interesting and otherwise useful in their own right.  

\begin{lemma}\label{lm:naturalOtherVertices}
Let $D$ be a convex drawing of $K_n$ and, for some $r\ge 4$, let $J$ be a $K_r$ such that $D[J]$ is natural, with facial $r$-cycle $C_J$.
\begin{enumerate}
\item\label{it:inside1}  If $u$ is inside $J$, then, for each $v\in V(J)$, $D[uv]$ is inside $J$.
\item\label{it:inside2}  If $u$ and $v$ are both inside $J$, then $D[uv]$ is inside $J$.
\item\label{it:outside}  If $u$ and $v$ are both outside $J$ and planarly joined to $J$, then $D[uv]$ is contained in the outside of $J$.
\item\label{it:crossing1}  Let $u$ be outside of $J$ and suppose there is a vertex $v$ of $J$ such that $D[uv]$ crosses $C_J$.  Then $D[uv]$ crosses $C_J$ exactly once.
\item\label{it:crossing2}  Suppose $u$ is outside of $J$ but, for vertices $v$ and $w$ of $J$, $D[uv]$ and $D[uw]$ both cross $C_J$.  Let $e$ and $f$ be the edges of $C_ J$ crossed by $D[uv]$ and $D[uw]$.  Then $v$ and $w$ are in the same component of $C_J-\{e,f\}$.
\item\label{it:crossing3} Suppose $u$ is outside of $J$, $v$ is a vertex of $J$, and $D[uv]$ crosses $C_J$ on the edge $ab$.  Then $D[ua]$ and $D[ub]$ are contained in the outside of $J$. 
\end{enumerate}  
\end{lemma}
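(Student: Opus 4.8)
The plan is to prove the six parts more or less in the listed order, using convexity (via the characterizations in Section~\ref{sec:convex}, especially Observation~\ref{obs:crossingK4}, Corollary~\ref{co:determinedSide}, and Corollary~\ref{co:twoPlanarToTriangle}) together with the basic topological fact that two closed edges meet at most once and edges at a common vertex do not cross. Throughout, recall that since $D[J]$ is natural with facial cycle $C_J$, every $3$-cycle $T$ of $J$ bounds a disc on the inside of $J$ that contains no vertex of $J$, and this inside disc has the Four Point Property, hence (by Corollary~\ref{co:badSide}, applied inside $D[J]$) is the convex side of $T$. So for a $3$-cycle $T$ of $J$, the convex side is always the one ``towards the inside''.

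For \ref{it:inside1}: if $u$ is inside $J$ and $v\in V(J)$, pick any two other vertices $a,b$ of $J$ with $av,bv\in E(C_J)$ so that $T=vab$ is a $3$-cycle of $J$ with $u$ on its convex (inside) side; but then I can't immediately say $D[uv]\subseteq \Delta_T$ since $v\in T$. Instead I argue directly: $D[uv]$ cannot cross $C_J$, because if its first crossing with $C_J$ were on an edge $cd$, the $K_4$ on $\{u,v,c,d\}$ would have $uv\times cd$ as its unique crossing, and then a $3$-cycle of that $K_4$ has a crossing $K_4$ on its inside-of-$J$ side, contradicting that the inside-of-$J$ disc is convex; make this precise by choosing the $3$-cycle $vcd$ and noting $u$ lies on the side towards the inside of $J$. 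Part \ref{it:inside2} follows the same way, or even more directly from Corollary~\ref{co:determinedSide}: take any $3$-cycle $T$ of $J$; both $u$ and $v$ are inside $J$, hence inside $\Delta_T$ for the appropriate $T$ (choose $T$ to be a facial triangle of $D[J]$ that ``sees'' both, or iterate over triangulating triangles of the inside), and convexity gives $D[uv]$ inside that disc, hence inside $J$. The cleanest route is: the inside of $J$ is a union of discs $\Delta_T$ over a triangulation of $D[J]$; both points lie in the inside; apply convexity of each $\Delta_T$ along a path of triangles between them. Part \ref{it:outside} is exactly Corollary~\ref{co:twoPlanarToTriangle} in spirit: if $u,v$ are outside and planarly joined to $J$, suppose $D[uv]$ crosses $C_J$; then it enters the inside, crossing $C_J$ an even number of times, so twice, on edges $ab$ and $cd$ of $C_J$; pick a triangle $T$ of $J$ separating the crossings and derive a crossing $K_4$ on the convex side, contradiction — or simply quote Corollary~\ref{co:twoPlanarToTriangle} after restricting to a suitable $K_5$ inside $J\cup\{u,v\}$.

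For the crossing parts \ref{it:crossing1}--\ref{it:crossing3}: \ref{it:crossing1} is the same two-point-intersection argument as Fact~1 in the proof of Lemma~\ref{lm:naturalCharacterization} — if $D[uv]$ crossed $C_J$ on $ab$ first, the $K_4$ on $\{u,v,a,b\}$ has that as its only crossing, so $D[uv]$ cannot re-cross $C_J\setminus\{ab\}$ and cannot re-cross $ab$. For \ref{it:crossing2}, suppose $D[uv]$ crosses $e$ and $D[uw]$ crosses $f$ with $v,w$ in different components of $C_J-\{e,f\}$: then the simple closed curve formed by $D[uv]$, the arc of $e$, part of $C_J$, \ldots gives a separation showing $D[vw]$ (an edge of $J$, hence part of $C_J$ or a chord) is forced to cross one of $D[uv]$, $D[uw]$ at an illegal second point, or forces a crossing $K_4$ inside $J$ on a convex side; the honest version: consider the $3$-cycle $uvw$ and the edges of $J$ — the edge $vw$ lies inside $J$ by \ref{it:inside1}/\ref{it:inside2} reasoning relativized, and the positions of $v,w$ on opposite sides force $D[vw]$ to leave the region bounded by $D[uv]\cup D[uw]\cup(\text{arc of }C_J)$, a contradiction. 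For \ref{it:crossing3}: $ua$ and $ub$ cannot cross $C_J$, because if $ua$ crossed $C_J$ it would do so (by \ref{it:crossing1}) once, on some edge $a'b'\neq$ the edges at $a$; but then on the $4$-cycle/triangle built from $u,v,a,b$ one gets either a second intersection of two closed edges or a crossing $K_4$ on the convex side of a triangle of $J$; alternatively, $D[uv]$, the arc of $ab$, and the edge $va$ or the position of $a$ relative to the crossing point pin $ua$ into the outside region.

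The main obstacle I expect is part \ref{it:crossing2} (and, relatedly, making \ref{it:crossing3} airtight): these are purely about the cyclic order in which edges from an outside vertex $u$ stab $C_J$, and the cleanest proof is a careful planarity/Jordan-curve argument rather than a black-box appeal to convexity. I would set it up by letting $\times_v$ and $\times_w$ be the (unique, by \ref{it:crossing1}) crossing points of $D[uv],D[uw]$ with $C_J$, forming the simple closed curve $\gamma$ consisting of the sub-arc of $D[uv]$ from $u$ to $\times_v$, one of the two arcs of $C_J$ from $\times_v$ to $\times_w$, and the sub-arc of $D[uw]$ back from $\times_w$ to $u$; then $\gamma$ separates the sphere, $v$ and $w$ lie on opposite sides precisely when they are in different components of $C_J-\{e,f\}$, and in that case the edge $D[vw]$ — which is inside $J$ except possibly near its ends — is forced to cross $\gamma$, i.e. to cross $D[uv]$ or $D[uw]$ or $C_J$ in a forbidden way, since all three of $uv,uw,vw$ share vertices pairwise and $vw$ already meets $C_J$ only at its endpoints. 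Getting the case analysis of ``which arc of $C_J$, which side, which component'' exactly right is the fiddly bit; everything else is a routine application of the Section~\ref{sec:convex} machinery.
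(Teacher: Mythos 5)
Your plan is sound for parts (1), (3) and (4), but it breaks down at exactly the two places you flag as delicate, and in both cases the failure is structural rather than a matter of fiddly case analysis.

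For part~\ref{it:inside2}, neither of your proposed routes works. There need not be any 3-cycle $T$ of $J$ whose convex side contains both $u$ and $v$: already for a natural $K_4$ with cycle $C_J=(1,2,3,4)$ and crossing $13\times 24$, if $u$ lies in the quadrant at edge $12$ and $v$ in the quadrant at edge $34$, then each of the four 3-cycles of $J$ has as its convex side the union of two adjacent quadrants, and none of these contains both points. A ``path of triangles'' does not rescue this, because convexity of each disc along the path says nothing about an edge whose endpoints lie in different discs. Nor does your part-(1) argument transfer: if $D[uv]$ first crosses $C_J$ on $cd$, the crossing $K_4$ is $\{u,v,c,d\}$, but now none of its 3-cycles is a 3-cycle of $J$, so you no longer know which side of $ucd$ or $vcd$ is convex and Observation~\ref{obs:crossingK4} yields no contradiction. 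The paper's proof uses a genuinely different device: it takes the 3-cycle $uxy$ with $xy\in E(C_J)$ chosen so that $v$ lies in the side of $D[uxy]$ containing no vertex of $J-\{x,y\}$, and shows that side either is convex or is contained in the convex side of a triangle of $J$.

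For part~\ref{it:crossing2}, your intention to replace the ``black-box appeal to convexity'' by a pure Jordan-curve argument cannot succeed: the statement fails for general good drawings, so convexity must enter. Concretely, your curve $\gamma$ meets the closed inside of $J$ only along an arc of $C_J$, while $D[vw]$ is a chord drawn in the closed inside of $J$ meeting $C_J$ only at $v$ and $w$; so $D[vw]$ is never forced to cross $\gamma$ and no contradiction appears. (Moreover, whichever arc of $C_J$ you splice into $\gamma$ contains one of $v$, $w$, so ``$v$ and $w$ on opposite sides of $\gamma$'' is not even well formed.) The paper instead takes $x$ to be the end of $e$ in the component of $C_J-\{e,f\}$ containing $v$ and $y$ the end of $f$ in the component containing $w$, checks that $D[xw]$ crosses $D[uv]$, that $D[yv]$ crosses $D[uw]$, and that $x$ and $y$ lie on opposite sides of the 3-cycle $uvw$; Observation~\ref{obs:crossingK4} then shows neither side of $uvw$ is convex, contradicting convexity of $D$. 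Since your part~\ref{it:crossing3} leans on part~\ref{it:crossing2}, it inherits this gap. Two smaller points: in part~\ref{it:outside} an edge with both ends outside could a priori cross $C_J$ any even number of times, not just twice (though your fallback via Corollary~\ref{co:twoPlanarToTriangle} is exactly what the paper does); and in part~\ref{it:inside1} your worry is unfounded, since Definition~\ref{df:convex} applies to any two vertices drawn in the closed disc, including vertices of $T$ itself --- which is precisely the paper's one-line proof.
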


\begin{cproof}
We start with (\ref{it:inside1}).  If we consider the edges of $J$ incident with $v$, they partition the inside of $J$ into discs bounded by 3-cycles.  As $|V(J)|\ge 4$, the disc containing $u$ is the convex side of its bounding 3-cycle.  Thus, $D[uv]$ is inside this disc and so is inside $J$.

For (\ref{it:inside2}), we present an argument suggested by Kasper Szabo Lyngsie that simplifies our original. There is an edge $xy$ in $C_J$ such that $v$ is in the side $\Delta$ of $D[uxy]$ that has no vertices of $J-\{x,y\}$.  If there is an edge of $J$ incident with either $x$ or $y$ that  crosses the 3-cycle $uxy$, then $v$ is in the crossing side of a natural $K_4$ containing $u$, $x$, and $y$.  In this case, $\Delta$ is the convex side of $uxy$, so $D[uv]$ is inside $\Delta$.

In the other case, let $x'$ and $y'$ be the neighbours of $x$ and $y$, respectively, in $C_J-xy$.  Then $\Delta$ is contained in the convex side $\Delta'$ of the 3-cycle $x'xy$, and again $D[uv]$ is contained in $\Delta'$ and consequently inside $J$.   (We remark, in fact $D[uv]$ is contained inside $\Delta$, but $vx$ or $vy$ might cross $uxy$, so $\Delta$ need not be the convex side of $uxy$.)

Moving on to (\ref{it:outside}), let $x,y,z$ be any three vertices of $J$ and let $L$ be the $K_5$ induced by $u,v,x,y,z$.  Then $D[L]$ is a convex drawing of $K_5$.  Let $T$ be the 3-cycle $(x,y,z)$.  The assumption that $u$ and $v$ are planarly drawn to $T$ in $D$ shows that the side $\Delta_T$ of $T$ that contains $u$ and $v$ satisfies the \fpp\  in $D[L]$.  

\rbrnew{Corollary \ref{co:twoPlanarToTriangle}} implies that $D[uv]$ is contained in $\Delta_T$.  This is true for every three vertices of $J$, so $D[uv]$ is contained in the intersection of all the $\Delta_T$'s; this is precisely the \rbrnew{closure of the} face of $D[J]$ containing $D[u]$ and $D[v]$, as required.

In the proof of (\ref{it:crossing1}), we suppose the first crossing of $uv$ is with the edge $xy$ of $C_J$.  The 3-cycle $xyv$ is inside $J$ and, by the definition of drawing, $uv$ cannot cross $xyv$ a second time.

Turning to (\ref{it:crossing2}), we suppose that $v$ and $w$ are in different components of $C_J-\{e,f\}$ and that $uv$ crosses $e$, while $uw$ crosses $f$.    Let $x$ be the end of $e$ in the component of $C_J-\{e,f\}$ containing $v$ and let $y$ be the end of $f$ in the component of $C_J-\{e,f\}$ containing $w$.  By the definition of drawing, $x\ne v$ and $y\ne w$.  

The edge $xw$ crosses $uv$ and the edge $yv$ crosses $uw$.  Moreover, $x$ and $y$ are on different sides of the 3-cycle $uvw$, so $uvw$ has no convex side, a contradiction.

For (\ref{it:crossing3}), it suffices by symmetry to show $D[ua]$ is outside $J$.  In the alternative, $ua$ crosses $C_J$.  Since it cannot cross $uv$ by goodness, it must cross the $av$-subpath of $C_J-ab$.  But now $ua$ and $uv$ violate (\ref{it:crossing2}).
\end{cproof}

We now turn to the basic ingredient in the structure theorem.  Let $D$ be a convex drawing of $K_n$ and, for some $r\ge 4$, let $J$ be a $K_r$ such that $D[J]$ is natural.   The {\em $J$-induced drawing\/} $\bar J$ consists of the subdrawing induced by $D[J]$ and all vertices inside of $J$.  The following is the main point in the proof of the structure theorem.

\begin{lemma}\label{lm:naturalGrows}
Let $D$ be  a convex drawing of $K_n$ and, for some $r\ge 4$, let $J$ be a $K_r$ such that $D[J]$ is natural.  If there is a vertex $u$ outside $J$ and a vertex $v$ of $J$ such that $D[uv]$ crosses $C_J$, then there is, for some $s\ge 4$, a $K_s$-subgraph $J'$ including $u$ such that $D[J']$ is natural and $\bar J\subset \bar {J'}$.
\end{lemma}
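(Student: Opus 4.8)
I want to show that if some edge from an outside vertex $u$ to a vertex $v$ of $J$ crosses $C_J$, then $u$ can be absorbed into a larger natural $K_s$ whose induced drawing contains $\bar J$. The natural strategy is to identify exactly which vertices of $J$ are joined to $u$ \emph{without} crossing $C_J$, build a cycle through $u$ and those vertices, and verify it bounds a face on the side away from the inside of $J$.

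First I would analyze the set $A=\{a\in V(J): D[ua]\text{ does not cross }C_J\}$ and its complement $B=\{b\in V(J): D[ub]\text{ crosses }C_J\}$. By Lemma~\ref{lm:naturalOtherVertices}(\ref{it:crossing1}) each edge $D[ub]$ with $b\in B$ crosses $C_J$ exactly once, on some edge $e_b$ of $C_J$; and by (\ref{it:crossing3}) both ends of $e_b$ lie in $A$. By (\ref{it:crossing2}), all the $v$'s in $B$ together with their crossed edges $e_b$ lie in a single component of $C_J$ once we delete those edges — i.e.\ the vertices of $B$ occupy a contiguous arc $P$ of $C_J$, and the two ends of $P$ are in $A$ (these ends are the endpoints of the extreme crossed edges). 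So $B$ spans an arc $P=b_0b_1\cdots b_k$ of $C_J$ with neighbours $a^-,a^+\in A$ just outside $P$. The edges $D[ua^-]$ and $D[ua^+]$ lie outside $J$ (by (\ref{it:crossing3}) applied to the extreme crossings, or directly since $a^\pm\in A$).

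Next I would define $J'$ to be the subgraph on $V(J)\cup\{u\}$ and exhibit the facial cycle: replace the arc $P$ of $C_J$ by the path $a^-\,u\,a^+$, i.e.\ $C_{J'}:=(C_J-\{\text{interior edges and vertices of }P\}) \cup \{a^-u,\,ua^+\}$ — wait, more carefully, $C_{J'}$ should be $C_J$ with the sub-path $a^- b_0 b_1\cdots b_k a^+$ replaced by $a^- u a^+$; but that drops the $b_i$ from the cycle, whereas I need a \emph{spanning} cycle of $J'$. The correct object: the facial cycle of the natural $K_s$ need not use all vertices of $J$. Indeed $J'$ will be the $K_s$ on $\{u, a^-, a^+\}\cup(A\setminus\{a^-,a^+\})$-arc — precisely, $J'$ is $K_n$ restricted to $\{u\}\cup(V(J)\setminus\{b_1,\dots,b_{k-1}\})$ if the extreme $b$'s coincide with $a^\pm$... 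Let me restate cleanly: let $Q=b_0\cdots b_k$ be the \emph{shortest} sub-path of $C_J$ containing every vertex of $B$. Let $J'$ have vertex set $(V(J)\setminus\{b_1,\dots,b_{k-1}\})\cup\{u\}$ and define $C_{J'}$ to be $C_J$ with the sub-path $b_0b_1\cdots b_k$ replaced by the path $b_0\,u\,b_k$. I claim $C_{J'}$ bounds a face of $D[J']$, on the side opposite to the inside of $J$: the edges $D[ub_0],D[ub_k]$ are outside $J$ (ends of crossed edges), and one checks using (\ref{it:crossing1})--(\ref{it:crossing3}) that no edge of $J'$ enters the region bounded by $C_{J'}$ on the far side. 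Then $\bar{J'}\supseteq\bar J$ because every vertex inside $J$ is inside $J'$ (the inside of $J$ is contained in the inside of $J'$, since the region got larger by swallowing the $b_i$-triangles), and by (\ref{it:inside1}), (\ref{it:inside2}) all the relevant edges stay inside; the discarded vertices $b_1,\dots,b_{k-1}$ are themselves inside $J'$ by (\ref{it:crossing1}) and convexity. Since $s=|V(J')|\ge 4$ (we keep $b_0,b_k$ and at least one more vertex of $C_J$ when $k<r$; and if $B$ exhausts $C_J$ one argues $u$ plus three outside-joined vertices still give a natural $K_4$), this $J'$ works, and the inclusion is proper because $u\in V(\bar{J'})\setminus V(\bar J)$.

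\textbf{The main obstacle.} The delicate point is proving that the proposed cycle $C_{J'}$ actually bounds a \emph{face} of $D[J']$ — that nothing from $J'$ crosses into the outside region — and correctly handling the degenerate cases ($k=r$, i.e.\ $B$ is all of $C_J$; or $b_0,b_k$ adjacent so the arc is short). For the main case the key is: an edge of $J'$ with an endpoint in $\{b_1,\dots,b_{k-1}\}$ is excluded since those are not vertices of $J'$; an edge $D[xy]$ with $x,y$ both surviving vertices of $C_J$ stays on the inside by (\ref{it:outside})/(\ref{it:inside1}); an edge $D[ux]$ for a surviving $x\ne b_0,b_k$ is in $A$ so outside $J$, and I must check it does not cross the arc $b_0ub_k$, which follows from goodness (it shares $u$) together with (\ref{it:crossing2})-type reasoning on $C_J$. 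I would also need to double-check the orientation claim — that the new face is the one playing the role of "outside" — by tracking which side of $C_{J'}$ contains the inside of $J$; convexity of $D[ub_0b_k]$ (it bounds the swallowed region containing all the $b_i$'s and the old inside) is what pins this down. Once the facial cycle and orientation are established, $\bar J\subset\bar{J'}$ is essentially bookkeeping with parts (\ref{it:inside1})--(\ref{it:inside2}) of Lemma~\ref{lm:naturalOtherVertices}.
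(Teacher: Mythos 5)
Your construction of the new facial cycle is inverted, and this is a genuine gap rather than a presentational slip. You put $b_0$ and $b_k$ --- vertices of $B$, i.e.\ vertices whose edges to $u$ \emph{cross} $C_J$ --- on the proposed cycle $C_{J'}$, joined to $u$ by the edges $ub_0$ and $ub_k$. But each of $D[ub_0]$, $D[ub_k]$ crosses $C_J$ exactly once, on an edge whose two ends lie in $A$ (your own observation via Lemma~\ref{lm:naturalOtherVertices}(\ref{it:crossing3})); since the $A$-vertices are exactly the ones on the arc of $C_J$ that survives into $C_{J'}$, that crossed edge is an edge of $C_{J'}$. Hence $C_{J'}$ contains two of its own edges that cross each other: it is not a simple closed curve and cannot bound a face. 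A concrete instance: $C_J=(1,2,3,4,5,6)$ with $u$ outside near the edge $45$, with $u3,u4,u5$ uncrossed and $u1,u2,u6$ crossing $C_J$ on $34$ or $45$. Then $B=\{6,1,2\}$, your cycle is $(6,u,2,3,4,5)$, and $u6$ crosses $34$ or $45$, both of which you kept. (The fallback you first wrote down --- attaching $u$ to the flanking $A$-vertices $a^-=5$, $a^+=3$ and swallowing all of $B$ --- also fails: in this example the resulting $K_4$ on $\{u,3,4,5\}$ is non-crossing, and $u4$ lies inside the region that would have to be a face.)

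The paper's construction is essentially the mirror image of yours. Let $ab$ be the edge of $C_J$ crossed by $D[uv]$; by Lemma~\ref{lm:naturalOtherVertices}(\ref{it:crossing3}), $ua$ and $ub$ are uncrossed, so on each of the two arcs of $C_J-ab$ from $v$ there is a vertex ($w_a$, resp.\ $w_b$) \emph{nearest $v$} whose edge to $u$ is uncrossed. The new facial cycle is $u$ together with the $w_aw_b$-arc $P$ of $C_J$ \emph{through $v$}: every interior vertex $x$ of $P$ has $ux$ crossing $C_J$, and parts (\ref{it:crossing1}) and (\ref{it:crossing3}) of Lemma~\ref{lm:naturalOtherVertices} force that crossing to lie on the complementary arc, so $ux$ never enters the region cut off in the outer face by $uw_a\cup P\cup w_bu$. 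The vertices that get swallowed are those on the complementary arc (the ones near $a$ and $b$), not the vertices of $B$; and $s=1+|V(P)|\ge 4$ is automatic because $w_a,v,w_b$ are distinct, so none of your degenerate cases arises. In short, the vertices whose $u$-edges cross $C_J$ are precisely the ones that must \emph{stay} on the new facial cycle, while your proof tries to discard them. (Separately, your claim that $B$ is a contiguous arc does not follow directly from Lemma~\ref{lm:naturalOtherVertices}(\ref{it:crossing2}), which is only a pairwise statement about components of $C_J$ minus two edges; the paper's argument never needs such a global statement.)
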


\begin{cproof}
Let $ab$ be the edge of $C_J$ crossed by $uv$.  Lemma \ref{lm:naturalOtherVertices} (\ref{it:crossing3}) implies that $D[ua]$ and $D[ub]$ are contained in the outside of $J$.  It follows that, in the $av$-subpath of $C_J-ab$, there is a vertex $w_a$ nearest $v$ such that $D[uw_a]$ is contained in the outside of $J$.  Likewise, there is a nearest such vertex $w_b$ in the $bv$-subpath.

For any vertex $x$ in the $w_aw_b$ subpath $P$ of $C_J-ab$ other than $w_a$ and $w_b$, $vx$ must cross $C_J$; \rbrnew{Lemma \ref{lm:naturalOtherVertices} (\ref{it:crossing1}) and (\ref{it:crossing3}) imply $vx$ must cross $w_aw_b$.  It follows that $vx$ does not cross $P$.     Thus,} the cycle consisting of $u$, together with $P$, makes the facial cycle for a natural $K_s$ ($s=1+|V(P)|\ge 4$) and all the points of $\bar J$ are in or inside this $K_s$.
\end{cproof}

Our structure theorem is an immediate consequence of Lemma \ref{lm:naturalGrows}.

\begin{theorem}[Structure Theorem]\label{th:structure}  Let $n\ge 5$ and let $D$ be a convex drawing of $K_n$.  Then, for some $r\ge 4$, there is a $K_r$-subgraph $J$ such that $D[J]$ is natural,  every vertex outside of $J$ is planarly joined to $J$, and any two vertices outside $J$ are joined outside $J$. \hfill$\eop$
\end{theorem}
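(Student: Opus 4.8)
The plan is to run an extremal argument: among all natural complete subgraphs of $D$ on at least four vertices, choose one whose induced drawing captures as many vertices as possible, and then show that maximality forces every remaining vertex to attach planarly.

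First I would check that the family of natural $K_r$'s with $r\ge 4$ is nonempty. Since $n\ge 5$, any five vertices induce a drawing of $K_5$, which has a crossing (as $\crn(K_5)=1$); two edges incident with a common vertex do not cross, so the two crossing edges are independent and their four endpoints induce a crossing $K_4$. A crossing $K_4$ is natural: if $wy$ crosses $xz$, the $4$-cycle $wxyz$ is a simple closed curve crossed by no edge of the $K_4$, both diagonals lie in the same one of the two discs it bounds (they cross each other there), and the other disc is therefore a face bounded by that $4$-cycle. Hence natural $K_4$'s exist, and I can pick, among all $K_r$-subgraphs $J$ with $r\ge 4$ and $D[J]$ natural, one for which the $J$-induced drawing $\bar J$ has the maximum possible number of vertices; fix this $J$, with facial cycle $C_J$.

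Next I would show that every vertex $u$ outside $J$ is planarly joined to $J$. Suppose not. Since $D[u]$ lies in the face of $D[J]$ bounded by $C_J$ while every chord of $J$ lies on the other side of $C_J$, an edge $D[uv]$ with $v\in V(J)$ that meets any edge of $J$ must first cross $C_J$; thus failure of the planar-join condition yields a vertex $v\in V(J)$ with $D[uv]$ crossing $C_J$. Now Lemma~\ref{lm:naturalGrows} produces, for some $s\ge 4$, a natural $K_s$-subgraph $J'$ containing $u$ with $\bar J\subset\bar{J'}$. Because $u$ is outside $J$ it is not a vertex of $\bar J$, whereas $u\in V(J')\subseteq V(\bar{J'})$ and $V(\bar J)\subseteq V(\bar{J'})$; so $\bar{J'}$ has strictly more vertices than $\bar J$, contradicting the choice of $J$. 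Therefore every vertex outside $J$ is planarly joined to $J$.

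Finally, for any two vertices $u,w$ outside $J$, both are planarly joined to $J$, so Lemma~\ref{lm:naturalOtherVertices}(\ref{it:outside}) gives that $D[uw]$ is contained in the outside of $J$; this is exactly the remaining assertion of the Structure Theorem (all the quantified statements being vacuous if $\bar J$ already contains all $n$ vertices). I do not expect a genuine obstacle here: the substantive geometry has been packaged into Lemma~\ref{lm:naturalGrows} and Lemma~\ref{lm:naturalOtherVertices}, and the only place that needs care is the reduction used above, from ``$u$ is not planarly joined to $J$'' to ``some $D[uv]$ crosses $C_J$'', which amounts to observing that from the outside face of $D[J]$ an edge cannot reach a chord of $J$ without first crossing the facial cycle $C_J$.
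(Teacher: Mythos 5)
Your proposal is correct and is exactly the argument the paper intends: the paper presents Theorem~\ref{th:structure} as an immediate consequence of Lemma~\ref{lm:naturalGrows}, meaning precisely the extremal choice of a natural $K_r$ maximizing $|V(\bar J)|$, the contradiction via Lemma~\ref{lm:naturalGrows} if some outside vertex fails to be planarly joined, and Lemma~\ref{lm:naturalOtherVertices}(\ref{it:outside}) for the final assertion. Your added details (existence of a natural $K_4$ from a crossing in some $K_5$, and the reduction from ``not planarly joined'' to ``some $D[uv]$ crosses $C_J$'' using that all chords of $J$ lie on the inside of $C_J$) are the right ones and are sound.
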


As a consequence of the Structure Theorem, we have the following observation.  

\begin{theorem}\label{th:emptyK4s}  Let $n\ge 5$ and let $D$ be a convex drawing of $K_n$.  Suppose that, for every subgraph $J$ of $K_n$ that is isomorphic to a $K_4$ and $D[J]$ has a crossing, there are no vertices of $K_n$ inside  $D[J]$.  Then $D[K_n]$ is either:
\begin{enumerate}\item a natural $K_n$;  or \item a natural $K_{n-1}$ with one vertex outside that is planarly joined to the $K_{n-1}$; or \item  the unique drawing of $K_6$ with three crosssings.
\end{enumerate}
\end{theorem}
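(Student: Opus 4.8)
The plan is to run the Structure Theorem (Theorem~\ref{th:structure}) and analyze the two extremal vertex sets it produces. Apply it to obtain a $K_r$-subgraph $J$ with $D[J]$ natural ($r\ge 4$); let $S$ be the set of vertices drawn inside $J$ and $R$ the set drawn outside $J$, so that $V(K_n)=V(J)\cup S\cup R$, every vertex of $R$ is planarly joined to $J$, and (by Lemma~\ref{lm:naturalOtherVertices}(\ref{it:outside})) any two vertices of $R$ are joined outside $J$. First I would prove $S=\varnothing$. If $r=4$ this is immediate, since a natural $K_4$ \emph{is} a crossing $K_4$ and the hypothesis forbids vertices inside it. If $r\ge 5$ and $u\in S$, fix a vertex $v$ of $J$: the edges of $J$ at $v$ cut the inside of $J$ into discs bounded by $3$-cycles, and $D[u]$ lies in one of them, bounded by a $3$-cycle $T=vab$ with $ab\in E(C_J)$. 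Picking a fourth vertex $c$ of $J$ (possible since $r\ge 5$), the vertices $v,a,b,c$ are in convex position on $C_J$, so $D[\{v,a,b,c\}]$ is a crossing $K_4$; inspecting the cyclic order of $v,a,b,c$ on $C_J$ shows that the disc bounded by $T$ containing $D[u]$ lies inside $D[\{v,a,b,c\}]$, so $D[u]$ is inside a crossing $K_4$ --- a contradiction. Hence $S=\varnothing$ and $r+|R|=n$.

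If $|R|=0$ then $D=D[J]$ is a natural $K_n$, which is conclusion~(1). If $|R|=1$ then $r=n-1$ and $D$ is a natural $K_{n-1}$ together with one vertex drawn outside and planarly joined to it, which is conclusion~(2). So it remains to treat $|R|\ge 2$; I claim this forces $r=4$, $|R|=2$, $n=6$, and $D$ the optimal drawing of $K_6$ (conclusion~(3)).

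The main content is a bound on $|R|$. Fix $u_1,u_2\in R$. Since $u_1$ is planarly joined to $J$, its $r$ edges to $V(J)$ split the outside face of $J$ into $r$ triangular regions bounded by the $3$-cycles $u_1v_jv_{j+1}$, and $D[u_2]$ lies in one of them, say the one bounded by $u_1v_kv_{k+1}$. For each $i$, the $K_5$ on $\{u_1,u_2,v_i,v_{i+1},v_{i+2}\}$ is convex and contains a non-crossing $K_4$ (namely $D$ restricted to $u_1$ together with three vertices of $J$, which is non-crossing because $u_1$ is planarly joined), so by Lemma~\ref{lm:badK5's} it is not the natural $K_5$; being convex, it therefore has at most one crossing. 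Comparing these $K_5$'s forces $D[u_2v_{k+3}]$ to take the short route across $u_1$'s fan, crossing exactly $D[u_1v_{k+1}]$ and $D[u_1v_{k+2}]$; and when $r\ge 5$ this makes $D[\{u_1,u_2,v_{k+1},v_{k+3}\}]$ a crossing $K_4$ whose inside contains $v_{k+2}$, which is forbidden. Hence $r=4$, so $J$ is a crossing $K_4$. A parallel analysis --- or an induction on $n$, using that deleting a vertex of $R$ yields a smaller drawing still satisfying the hypotheses --- then rules out $|R|\ge 3$, leaving $|R|=2$ and $n=6$. Finally, with $r=4$ and $|R|=2$ the same short-route constraints pin $D$ down: $u_2$ lies in one region of $u_1$'s fan, and $D[u_2v_{k+2}]$, $D[u_2v_{k+3}]$ each cross exactly one fan edge, so $D$ has exactly three crossings and is the unique optimal drawing of $K_6$.

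The hard part is precisely this last block: controlling $|R|$. The delicate point is that the $R$-vertices nest inside the fans that one another determine, and one must extract from convexity --- through the ``at most one crossing in a $3{+}2$ drawing of $K_5$'' principle --- that each edge from an $R$-vertex to $J$ is routed as economically as possible across the relevant fan. Only once this routing is forced does the forbidden ``crossing $K_4$ with an interior vertex'' materialize, and the bookkeeping of exactly which fan edges each such edge crosses, together with the verification that the resulting crossing $K_4$ genuinely encloses a vertex of $J$ (when $r\ge 5$) or of $R$ (when $r=4$, $|R|\ge3$), is where the care is needed.
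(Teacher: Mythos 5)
Your overall architecture coincides with the paper's (which itself only gives a proof sketch): apply the Structure Theorem, use the hypothesis to empty the inside of the natural $K_r$, show that $r\ge 5$ is incompatible with two outside vertices because some edge from the second outside vertex must cross two consecutive fan edges of the first and thereby trap a vertex of $J$ inside a crossing $K_4$, and identify the surviving case $r=4$, $|R|=2$ with the optimal $K_6$. So the decomposition and the key mechanism are the same as the paper's.

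One step is justified incorrectly, though. You write that the $K_5$ on $\{u_1,u_2,v_i,v_{i+1},v_{i+2}\}$ is convex and contains a non-crossing $K_4$, hence is not the natural $K_5$, hence \emph{``being convex, it therefore has at most one crossing.''} That inference is invalid: convex drawings of $K_5$ have $1$, $3$, or $5$ crossings, and the $3$-crossing convex $K_5$ (four vertices in convex position with one vertex inside the quadrilateral) is not natural and does contain two non-crossing $K_4$'s, so ruling out the natural $K_5$ still leaves $3$ crossings as a live possibility. The statement you want is nevertheless true here, but for a different reason: in the $3$-crossing convex $K_5$ the interior vertex lies inside the crossing $K_4$ spanned by the other four vertices, and that is precisely the configuration forbidden by the hypothesis of the theorem. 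So you must invoke the empty-crossing-$K_4$ hypothesis, not merely convexity plus non-naturality, to force these $K_5$'s down to one crossing. With that repair your ``short route'' argument can proceed; note also that for the $r\ge 5$ contradiction you do not really need to pin down the route of $D[u_2v_{k+3}]$ --- whichever way it travels around $u_1$'s fan it crosses two consecutive fan edges, which already yields a crossing $K_4$ with a vertex of $J$ inside, and this is all the paper's sketch uses. Your treatment of $|R|\ge 3$ is left at the same level of assertion as the paper's one-line claim; if you pursue the induction you should verify that a third outside vertex added to the optimal $K_6$ necessarily lands inside one of its crossing $K_4$'s.
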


\bigskip\noindent{\bf Proof sketch.}
Apply the Structure Theorem \ref{th:structure} to $D$ to get a subgraph $J$ of $K_n$ such that $D[J]$ is a natural $K_r$, with $r\ge 4$ and every other vertex of $K_n$ is either inside $D[J]$ or is outside $J$ and planarly joined to $J$. 

Any vertex inside $D[J]$ is in a face that is incident with a crossing of some crossing $K_4$ involving four vertices in $J$.  Since this is forbidden, there is no vertex inside $D[J]$.

If there are three vertices of $K_n$ outside $D[J]$, then there is a crossing $K_4$ with a vertex inside. 

If there are two vertices $u,v$ of $K_n$ outside $D[J]$ and some edge from $u$ to $J$ crosses two edges from $v$ to $J$, then there is a crossing $K_4$ with a vertex inside.  In particular, if $r\ge 5$, then there is at most one vertex outside $J$.  

The remaining case is $r=4$ and no $uJ$-edge crosses two $vJ$-edges and no $vJ$-edge crosses two $uJ$-edges.  This is the unique drawing of $K_6$ with three crossings. \hfill \eop

\bigskip
In general, if we bound by a non-negative integer $p$ the number of vertices allowed inside any natural $K_4$, there is a theorem in the spirit of Theorem \ref{th:emptyK4s}.  There \rbr{are more special cases with $n$ small}, but if $n$ is large enough (on the order of $3p$),  \rbr{the structure is:  a natural $K_r$, with $r$ at least roughly $p/3$, and at most one of the remaining points is outside} the natural $K_r$.

\section{h-convex drawings}\label{sec:hereditary}

In this section, we investigate h-convex drawings.  Our main results include a characterization of  h-convex drawings and a polynomial time algorithm for determining if a drawing is h-convex.

Consider the drawing $\EkS$.  It is convex, but not h-convex.  To see that it is convex, it suffices to check the six $K_5$'s and observe that none of them is either $\badTkF$ or $\badFkF$.   To see that it is not h-convex, consider the \rbr{dashed} $K_4$ \rbr{(including the thick edge)} highlighted in Figure \ref{fg:examples}.  For this $K_4$, either of the 3-cycles $T$ containing the \rbr{thick} edge has its bounded (in the figure) side \rbrnew{convex}.  A similar statement holds for the unbounded side of a 3-cycle in the \rbr{dotted} $K_4$ that contains the {red} edge.  These 3-cycles show that $D$ is not h-convex. 

\begin{definition}\label{df:invertedK4s}
Let $D$ be a drawing of $K_n$ and let $J$ and $J'$ be distinct $K_4$'s in $D$ such that both $D[J]$ and $D[J']$ are crossing $K_4$'s.  For 3-cycles $T$ and $T'$ in $J$ and $J'$, respectively, let $\Delta_T$ and $\Delta_{T'}$ be the sides of $T$ and $T'$, respectively, not containing the fourth vertex of $J$ and $J'$, respectively.  Then $J$ and $J'$ are {\em inverted $K_4$'s in $D$\/} if there are 3-cycles $T$ in $J$ and $T'$ in $J'$ such that $D[T]\subseteq \Delta_{T'}$ but $\Delta_T\not\subseteq \Delta_{T'}$.  
\end{definition}

\begin{observation}\label{obs:invertedSymmetry}
Let $J$ and $J'$ be inverted $K_4$'s in a drawing $D$ of $K_n$ and let $T$ and $T'$ be 3-cycles in $J$ and $J'$, respectively.  Let $\Delta_T$ and $\Delta_{T'}$ be the side of $T$ and $T'$, respectively, not containing the fourth vertex of $J$ and $J'$, respectively. If $D[T]\subseteq \Delta_{T'}$ but $\Delta_T\not\subseteq \Delta_{T'}$, then $D[T']\subseteq \Delta_{T}$ but $\Delta_{T'}\not\subseteq \Delta_{T}$.
\end{observation}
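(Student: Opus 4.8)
The plan is to prove this as a purely topological fact about two simple closed curves in $\sphere$: the only input beyond the Jordan curve theorem is that $D[T]$ and $D[T']$ are simple closed curves, which holds for $3$-cycles in any good drawing (see the remarks after Definition~\ref{df:convex}). Write $d,d'$ for the fourth vertices of $J,J'$, so that $\Delta_T$ is the closed side of $D[T]$ with $D[d]\notin\Delta_T$; let $\Lambda_T$ be the other closed side, so $\Delta_T\cup\Lambda_T=\sphere$ and $\Delta_T\cap\Lambda_T=D[T]$, and define $\Lambda_{T'}$ analogously for $D[T']$. Throughout I would use the elementary facts that, for a simple closed curve $\gamma$ in $\sphere$ with closed sides $A$ and $B$, the interiors $\mathrm{int}(A),\mathrm{int}(B)$ are exactly the two components of $\sphere\setminus\gamma$, that $\mathrm{int}(A)=\sphere\setminus B$, and that $\overline{\mathrm{int}(A)}=A$.

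The argument would have three steps. First, since $D[T]\subseteq\Delta_{T'}=\sphere\setminus\mathrm{int}(\Lambda_{T'})$, the open disc $\mathrm{int}(\Lambda_{T'})$ is disjoint from $D[T]$; being connected, it therefore lies in a single component of $\sphere\setminus D[T]$, i.e.\ either $\mathrm{int}(\Lambda_{T'})\subseteq\mathrm{int}(\Delta_T)$ or $\mathrm{int}(\Lambda_{T'})\subseteq\mathrm{int}(\Lambda_T)$. Second, I would rule out the second alternative: if $\mathrm{int}(\Lambda_{T'})\subseteq\mathrm{int}(\Lambda_T)$, then complementing in $\sphere$ gives $\Delta_T=\sphere\setminus\mathrm{int}(\Lambda_T)\subseteq\sphere\setminus\mathrm{int}(\Lambda_{T'})=\Delta_{T'}$, contradicting the hypothesis $\Delta_T\not\subseteq\Delta_{T'}$; hence $\mathrm{int}(\Lambda_{T'})\subseteq\mathrm{int}(\Delta_T)$. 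Third, I would read off both conclusions: taking closures gives $\Lambda_{T'}=\overline{\mathrm{int}(\Lambda_{T'})}\subseteq\overline{\mathrm{int}(\Delta_T)}=\Delta_T$, and since $D[T']\subseteq\Lambda_{T'}$ this yields $D[T']\subseteq\Delta_T$; and since $\mathrm{int}(\Delta_T)$ and $\mathrm{int}(\Lambda_T)$ are disjoint, $\mathrm{int}(\Lambda_{T'})\subseteq\mathrm{int}(\Delta_T)$ forces $\mathrm{int}(\Lambda_T)\subseteq\sphere\setminus\mathrm{int}(\Lambda_{T'})=\Delta_{T'}$, so $\Delta_{T'}$ contains the nonempty open set $\mathrm{int}(\Lambda_T)$, which is disjoint from $\Delta_T$; hence $\Delta_{T'}\not\subseteq\Delta_T$.

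I do not expect a genuine obstacle here; the entire content is Jordan-curve bookkeeping, and the symmetry of the statement is really the statement "$\mathrm{int}(\Lambda_{T'})\subseteq\mathrm{int}(\Delta_T)$ implies $\mathrm{int}(\Lambda_T)\subseteq\mathrm{int}(\Delta_{T'})$" dressed up. The one point that might look like it needs care is that $D[T]$ and $D[T']$ can share vertices or edges when $J$ and $J'$ overlap, so these curves need not be disjoint; but intersections of $D[T]$ and $D[T']$ never enter, since the argument only uses $D[T]\cap\mathrm{int}(\Lambda_{T'})=\emptyset$, which is immediate from $D[T]\subseteq\Delta_{T'}$. (If one wanted it, goodness forces $D[T]$ and $D[T']$ to meet only along common vertices and edges, since a transversal crossing of an edge of $T$ with an edge of $T'$ would carry part of $D[T]$ into $\mathrm{int}(\Lambda_{T'})$ — but this refinement is not needed.)
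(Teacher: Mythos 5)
Your argument is correct and is essentially the paper's own proof in more careful form: the paper likewise identifies the side of $D[T]$ contained in $\Delta_{T'}$ (your $\Lambda_T$, after ruling out the alternative via $\Delta_T\not\subseteq\Delta_{T'}$) and reads off both conclusions from Jordan-curve bookkeeping. Your version is, if anything, more precise than the paper's, which briefly conflates a closed side with its interior when asserting $F_T\cap\Delta_T=\varnothing$.
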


\begin{cproof}
Let $F_T$ be the side of $D[T]$ contained in $\Delta_{T'}$.  Evidently, $D[T]$ separates $F_T$ from $D[T']$ and $F_T\ne \Delta_T$.  It follows that $D[T']\subseteq \Delta_T$ and, since $F_T\subseteq \Delta_{T'}$ and $F_T\cap \Delta_T=\varnothing$, $\Delta_{T'}\not\subseteq \Delta_T$.
\end{cproof}

We are ready for our first characterization of h-convex drawings.

\begin{lemma}\label{lm:invertedK4s}
Let $D$ be a convex drawing of $K_n$.  Then $D$ is h-convex if and only if there are no inverted $K_4$'s.
\end{lemma}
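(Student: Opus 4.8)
The plan is to prove the contrapositive in both directions, using the Four Point Property machinery developed in Section \ref{sec:convex} together with the definition of h-convexity. For the forward direction, suppose $D$ is h-convex, so there is a choice $\Delta_T$ of convex sides of every 3-cycle $T$ witnessing heredity. I would argue that this choice must agree, on every 3-cycle that lies inside a crossing $K_4$, with the ``outer'' side used in Definition \ref{df:invertedK4s}: by Observation \ref{obs:crossingK4}, if $J$ is a crossing $K_4$ and $T$ is a 3-cycle in $J$, then the side of $D[T]$ containing the fourth vertex of $J$ is \emph{not} convex, so the convex side of $T$ is forced to be the side $\Delta_T^{\mathrm{def}}$ appearing in Definition \ref{df:invertedK4s}. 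Now if $J$ and $J'$ were inverted $K_4$'s, witnessed by 3-cycles $T \subseteq J$, $T' \subseteq J'$ with $D[T] \subseteq \Delta_{T'}^{\mathrm{def}}$ but $\Delta_T^{\mathrm{def}} \not\subseteq \Delta_{T'}^{\mathrm{def}}$, then since the hereditary family must use $\Delta_{T'} = \Delta_{T'}^{\mathrm{def}}$ and $\Delta_T = \Delta_T^{\mathrm{def}}$, heredity applied to $T' \subseteq \Delta_T$... wait --- more carefully: $D[T] \subseteq \Delta_{T'}$, so heredity forces the side of $D[T]$ contained in $\Delta_{T'}$ to be convex; but that side is $F_T \ne \Delta_T$ (in the notation of Observation \ref{obs:invertedSymmetry}), and $F_T$ is the side of $T$ containing the fourth vertex of $J$, which is non-convex by Observation \ref{obs:crossingK4} --- contradiction. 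So no inverted $K_4$'s exist.

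For the converse, suppose $D$ is convex with no inverted $K_4$'s; I want to produce a hereditary choice of convex sides. The natural candidate: for each 3-cycle $T$, let $\Delta_T$ be the convex side, choosing, when $T$ lies in some crossing $K_4$, the side not containing the fourth vertex (this is well-defined and convex by Observation \ref{obs:crossingK4} and Corollary \ref{co:determinedSide} --- a 3-cycle $T$ can have both sides convex only if no crossing $K_4$ contains $T$, and then the choice is immaterial for heredity since a convex $K_4$ contributes no ``inversion''); when $T$ lies in no crossing $K_4$, pick either convex side. I then must verify heredity: if $T$, $T'$ are 3-cycles with $D[T'] \subseteq \Delta_T$, then the side $\Delta_{T'}$ of $T'$ contained in $\Delta_T$ is convex. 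Using Corollary \ref{co:determinedSide}, $\Delta_{T'}$ fails to be convex only if there is a vertex $w$ with $D[w] \in \Delta_{T'}$ and $D[T'+w]$ a crossing $K_4$. The key point is that such a configuration, combined with the crossing $K_4$ through $T$ that pins down $\Delta_T$ on the ``wrong'' side, yields a pair of inverted $K_4$'s (the $K_4$ giving $\Delta_T$ its orientation, and $T'+w$), contradicting the hypothesis; here I use Observation \ref{obs:invertedSymmetry} to get the symmetric inversion statement and to match the side conventions. The case analysis splits on whether $T$ itself lies in a crossing $K_4$ or not, and on the position of $w$ relative to $\Delta_T$.

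The main obstacle I anticipate is the bookkeeping in the converse: matching the ``side not containing the fourth vertex'' convention of Definition \ref{df:invertedK4s} with the ambient sides $\Delta_T$, $\Delta_{T'}$ obtained from convexity, and handling 3-cycles lying in no crossing $K_4$ (where both sides might be convex but only one is nested correctly). The cleanest route is probably to first record a lemma-free observation: a convex 3-cycle side $\Delta$ is non-convex for the hereditary test relative to an outer $\Delta_T$ precisely when it exhibits an inverted-$K_4$ relationship, so that ``no inverted $K_4$'s'' is exactly ``the canonical choice of convex sides is hereditary.'' Once the side conventions are nailed down, each direction is a short deduction from Observation \ref{obs:crossingK4}, Corollary \ref{co:determinedSide}, and Observation \ref{obs:invertedSymmetry}.
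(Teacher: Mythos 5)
Your forward direction is correct and is essentially the paper's (deliberately brief) argument: on every 3-cycle lying in a crossing $K_4$ the convex side is forced by Observation \ref{obs:crossingK4}, so an inverted pair directly violates heredity. No issue there.

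The converse, however, has a genuine gap, and it sits exactly where the paper does its real work. Heredity is a condition on the \emph{chosen} family: whenever $D[T']\subseteq\Delta_T$, the chosen side $\Delta_{T'}$ must be the side of $T'$ contained in $\Delta_T$ (this is how ``hereditary'' is defined inside the paper's proof and used again in Lemma \ref{lm:heredityFourPoints}(2)). So for a 3-cycle in no crossing $K_4$ the choice is \emph{not} immaterial. Concretely, take the rectilinear $K_5$ with $p,q,r,w$ in convex position and $x$ in the triangular face of the crossing $K_4$ on $\{p,q,r,w\}$ incident with the crossing and with the edge $pq$. The 3-cycle $pqx$ lies in no crossing $K_4$ (both $pqx+r$ and $pqx+w$ are non-crossing since $x$ is interior to the triangles $pqr$ and $pqw$), so both of its sides are convex; yet $D[pqx]$ lies inside the forced convex side of $pqr$, so heredity forces $\Delta_{pqx}$ to be its bounded side. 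Hence ``pick either convex side'' can output a non-hereditary family even in a rectilinear drawing. Your verification step compounds this: it only checks the weaker assertion that the side of $T'$ nested in $\Delta_T$ is convex (which does not by itself yield a single globally consistent choice when different outer triangles could demand different sides of the same $T'$), and even that argument requires $T$ to lie in a crossing $K_4$ in order to manufacture an inverted pair --- when $T$ is free, Definition \ref{df:invertedK4s} gives you nothing, and you leave that case open. What is actually needed, and what the paper supplies, is a construction: start from the forced sides on 3-cycles in crossing $K_4$'s (hereditary precisely because there are no inverted $K_4$'s), then add the remaining 3-cycles one at a time, choosing $\Delta_{T_j}$ inside some already-chosen $\Delta_T$ with $D[T_j]\subseteq\Delta_T$ whenever such a $T$ exists; the inductively maintained heredity of the partial list is what rules out two earlier triangles demanding opposite sides of $T_j$. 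Without some such coordination argument the converse is not proved.
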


\begin{cproof}
It is clear that if $D$ is h-convex, then there are no inverted $K_4$'s.  

For the converse, we shall inductively obtain a list $\mathcal C$ of convex sides, one for each 3-cycle of $K_n$.  Along the way, the list $\mathcal C$ will have convex sides for some, but not all, of the 3-cycles of $K_n$.   Such a partial list is {\em hereditary\/} if, for any 3-cycles $T$ and $T'$ having convex sides $\Delta_T$ and $\Delta_{T'}$, respectively, in $\mathcal C$,  if $D[T]\subseteq \Delta_{T'}$, then $\Delta_T\subseteq \Delta_{T'}$.

Our initial list $\mathcal C_0$ consists of the convex sides for every 3-cycle that is in a crossing $K_4$.  The assumption that there are no inverted $K_4$'s immediately implies $\mathcal C_0$ is hereditary.

Let $T_1,\dots,T_r$ be the 3-cycles in $K_n$ such that, for $i=1,2,\dots,r$, $T_i$ is not in any crossing $K_4$.  For $j\ge 1$, suppose that $\mathcal C_{j-1}$ is a hereditary list of convex sides that includes $\mathcal C_0$ and a convex side for each of $T_1,\dots,T_{j-1}$.  

If there is a convex side $\Delta_T\in \mathcal C_{j-1}$ such that $D[T_j]\subseteq \Delta_T$, then we choose $\Delta_{T_j}$ so that $\Delta_{T_j}\subseteq \Delta_T$.  Otherwise, we choose $\Delta_{T_j}$ arbitrarily from the two sides of $D[T_j]$.  Set $\mathcal C_{j}=\mathcal C_{j-1}\cup \{\Delta_{T_j}\}$.

We show that $\mathcal C_j$ is hereditary.  If not, then, since $\mathcal C_{j-1}$ is hereditary, there is a 3-cycle $T$ with a convex side $\Delta_T\in \mathcal C_{j-1}$ such that either $D[T]\subseteq \Delta_{T_j}$ and $\Delta_T\not\subseteq \Delta_{T_j}$ or $D[T_j]\subseteq \Delta_{T}$ and $\Delta_{T_j}\not\subseteq \Delta_{T}$. 
\rbrnew{The second case implies} that $D[T]\subseteq \Delta_{T_j}$ and $\Delta_T\not\subseteq \Delta_{T_j}$, which is the first case.

Thus, in both cases, we have that $D[T]\subseteq \Delta_{T_j}$ and $\Delta_T\not\subseteq \Delta_{T_j}$.  By the choice of $\Delta_{T_j}$, there is a second already considered triangle $T'$ such that $D[T']$ is contained in the other side $\overline{\Delta}_{T_j}$ of $D[T_j]$ but $\Delta_{T'}\not\subseteq \overline{\Delta}_{T_j}$.  

It is clear that $D[T]\subseteq \Delta_{T'}$ and $\Delta_T\not\subseteq \Delta_{T'}$, yielding the contradiction that $\mathcal C_{j-1}$ is not hereditary.
\end{cproof}

We remark that a similar argument proves the following analogous fact for f-convexity.  This is essentially the characterization of pseudolinearity due to Aichholzer et al \cite{ahpsv}.

\begin{theorem}\label{th:freeFace}
Let $D$ be a drawing of $K_n$.  Then $D$ is f-convex if and only if there is a face $\Gamma$ such that, for every isomorph $J$ of $K_4$ for which $D[J]$ is a crossing $K_4$, $\Gamma$ is contained in the face of $D[J]$ bounded by the 4-cycle.  \hfill\eop
\end{theorem}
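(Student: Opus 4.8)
The plan is to mimic the inductive construction in the proof of Lemma~\ref{lm:invertedK4s}, but now building toward a single face $\Gamma$ rather than merely a hereditary family of convex sides. First I would dispose of the easy direction: if such a face $\Gamma$ exists, then for every $3$-cycle $T$ the side of $D[T]$ disjoint from $\Gamma$ is convex. To see this, note that by Corollary~\ref{co:determinedSide} a side $\Delta$ of $D[T]$ fails to be convex only if there is a vertex $w$ with $D[w]\in\Delta$ and $D[T+w]$ a crossing $K_4$; but then $\Gamma$ lies in the face of that crossing $K_4$ bounded by its $4$-cycle, which is on the side of $D[T]$ containing $D[w]$, i.e.\ inside $\Delta$, not disjoint from it. Hence the side of $D[T]$ disjoint from $\Gamma$ has no such $w$ and is convex; this exhibits $D$ as f-convex directly from the definition.

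For the converse, suppose $D$ is f-convex, so there is a face $\Gamma$ of $D$ such that for every $3$-cycle $T$ the side $\Delta_T$ disjoint from $\Gamma$ is convex. I want to show that for every crossing $K_4$ induced by $J$, the face of $D[J]$ bounded by the $4$-cycle contains $\Gamma$. Fix such a $J$ with crossing edges, say, $ac$ and $bd$, so that $D[J]$ has exactly one quadrilateral face $F_J$ (bounded by the $4$-cycle $abcd$) and four triangular faces. Each triangular face is incident with the crossing and lies on the non-$C_J$ side of a $3$-cycle of $J$; by Observation~\ref{obs:crossingK4} that side is non-convex, hence it must be the side containing $\Gamma$ — wait, that is backwards, so instead I argue: for each of the four $3$-cycles $T$ of $J$, the convex side $\Delta_T$ is the one \emph{not} containing the fourth vertex of $J$ (Observation~\ref{obs:crossingK4}), and by hypothesis $\Delta_T$ is also the side disjoint from $\Gamma$; therefore $\Gamma$ lies on the side of $D[T]$ that \emph{does} contain the fourth vertex, for every $3$-cycle $T$ of $J$. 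The intersection of those four ``fourth-vertex'' sides over all $3$-cycles of $J$ is exactly $F_J$, the quadrilateral face of $D[J]$. Hence $\Gamma\subseteq F_J$, as required.

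The step I expect to be the genuine obstacle — and the place where the argument above is slightly too glib — is justifying that the \emph{same} face $\Gamma$ of the full drawing $D$ can serve simultaneously for all crossing $K_4$'s, i.e.\ that the intersection $\bigcap_T \Delta_T$ over all $3$-cycles $T$ of $K_n$ of the convex sides disjoint from $\Gamma$ really has $\Gamma$ in it and is a single face, not just a nonempty region. The clean way around this is that $\Gamma$ is given to us as a face of $D$ from the outset (that is how f-convexity is defined), so $\Gamma$ is automatically on one definite side of every $3$-cycle $D[T]$; the only content is then the purely local observation, made above, that for a crossing $K_4$ that definite side must be the ``fourth-vertex'' side for each of its four triangles. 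Thus no inductive list-building is actually needed for Theorem~\ref{th:freeFace} — unlike in Lemma~\ref{lm:invertedK4s}, where the free face had to be manufactured — and the proof reduces to the two short arguments above plus the remark that the quadrilateral face of a crossing $K_4$ is the common intersection of the four ``fourth-vertex'' sides. I would present it in exactly that order: easy direction via Corollary~\ref{co:determinedSide}, then the converse via Observation~\ref{obs:crossingK4} and the elementary face structure of a crossing $K_4$.
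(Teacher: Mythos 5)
Your argument is essentially correct, and it is worth noting that the paper does not actually write out a proof of Theorem \ref{th:freeFace}: it only remarks that ``a similar argument'' to the inductive construction in Lemma \ref{lm:invertedK4s} establishes it. Your observation that no list-building is needed here --- because the face $\Gamma$ is given from the outset and automatically selects a definite side of every 3-cycle --- is correct and yields a more direct proof than the one the paper gestures at. The converse direction is exactly right: Observation \ref{obs:crossingK4} forces the convex side of each 3-cycle of a crossing $K_4$ to be the side avoiding the fourth vertex, so $\Gamma$ lies in all four fourth-vertex sides, and their intersection is precisely the quadrilateral face.

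The one step you must repair is the citation in the easy direction. Corollary \ref{co:determinedSide} is stated and proved only for drawings already known to be convex, so invoking its ``not convex implies there is such a $w$'' direction in order to establish convexity is circular. The correct tool is Corollary \ref{co:badSide}: show that for each 3-cycle $T$ the side $\Delta_T$ disjoint from $\Gamma$ has the Four Point Property --- if some vertex $w$ with $D[w]\in\Delta_T$ made $D[T+w]$ a crossing $K_4$, then $\Gamma$ would lie in the 4-cycle face of $D[T+w]$, which is contained in the interior of $\Delta_T$, contradicting disjointness --- and then Corollary \ref{co:badSide}, which carries no convexity hypothesis, upgrades the Four Point Property to convexity of every $\Delta_T$. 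With that substitution your proof is complete.
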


There is a colourful way to understand this theorem.  For each isomorph $J$ of $K_4$ for which $D[J]$ is a crossing $K_4$, let $C_J$ be the 4-cycle in $J$ that bounds a face of $D[J]$.  Paint the side of $D[C_J]$ that contains the crossing of $D[J]$.  If the whole sphere is painted, then $D$ is not f-convex.   Otherwise, with respect to any face $F$ of $D[K_n]$ that is not painted, $F$ witnesses that $D$ is f-convex. 

Our next result gives a surprising characterization of h-convex drawings of $K_n$ by a single forbidden configuration.

\begin{theorem}\label{th:elevenK6}
Let $D$ be a convex drawing of $K_n$.  Then $D$ is h-convex if and only if, for each isomorph $J$ of $K_6$ in $K_n$, $D[J]$ is not isomorphic to $\EkS$.
\end{theorem}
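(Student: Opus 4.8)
The forward direction is essentially free: if $D[J] \cong \EkS$ for some $K_6$-subgraph $J$, then the discussion preceding Definition \ref{df:invertedK4s} exhibits, inside $D[J]$, a pair of inverted $K_4$'s (the ``dashed'' $K_4$ through the thick edge and the ``dotted'' $K_4$ through the red edge), so Lemma \ref{lm:invertedK4s} gives that $D$ is not h-convex. I would spell this out by naming the six vertices of $\EkS$, identifying the two crossing $K_4$'s, and checking that the required 3-cycles $T \subseteq J$ and $T' \subseteq J'$ satisfy $D[T]\subseteq \Delta_{T'}$ while $\Delta_T \not\subseteq \Delta_{T'}$ — this is a finite verification on the fixed drawing in Figure \ref{fg:examples}.

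For the contrapositive of the converse, I assume $D$ is convex but not h-convex; by Lemma \ref{lm:invertedK4s} there exist inverted $K_4$'s $J$ and $J'$, with 3-cycles $T\subseteq J$, $T'\subseteq J'$ and sides $\Delta_T,\Delta_{T'}$ (the non-$K_4$-vertex sides) such that $D[T]\subseteq \Delta_{T'}$ but $\Delta_T\not\subseteq\Delta_{T'}$; Observation \ref{obs:invertedSymmetry} gives the symmetric statement. The goal is to show the (at most six) vertices involved in $J\cup J'$ span a $K_6$ whose induced drawing is $\EkS$. The first task is to control how many vertices $J$ and $J'$ share. If $|V(J)\cap V(J')|\le 2$ then $|V(J\cup J')|\ge 6$ and I pick any six of those vertices (preferably all of $J\cup J'$ when that is exactly six). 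If they share $3$ or $4$ vertices I would argue that the configuration can be completed or that a smaller forbidden object already forces non-convexity — but the cleaner route is probably to first reduce to a minimal non-h-convex convex drawing: choose inverted $K_4$'s $J,J'$ minimizing $|V(J)\cup V(J')|$, and show this forces $|V(J)\cup V(J')|=6$ and $|V(J)\cap V(J')|\le 1$ (if they shared two or more vertices one could use convexity of the relevant $K_5$'s, via Lemma \ref{lm:planarK4convex} / Corollary \ref{co:determinedSide}, to relocate a vertex and shrink the union, contradicting minimality).

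Once I have a $6$-vertex set $X=V(J)\cup V(J')$ with $|X|=6$, the heart of the argument is to pin down $D[K_6]$ on $X$. Here I would use the Structure Theorem \ref{th:structure} applied to the convex drawing $D[K_6]$ on $X$: it is built from a natural $K_r$ ($4\le r\le 6$) with the remaining vertices either inside (in crossing-$K_4$ faces) or planarly joined outside. Combining this with the facts that (i) $D[K_6]$ must be convex (no $\badTkF,\badFkF$, by Lemma \ref{lm:badK5's}) and (ii) $D[K_6]$ contains inverted $K_4$'s (so it is not h-convex), I would enumerate the possible $r$ and placements. For $r=6$ the drawing would be natural, hence f-convex, hence h-convex — excluded. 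For $r=4$ with one interior vertex and one exterior vertex, or $r=5$ with one vertex placed, a short case analysis on which $K_4$'s cross and how the crossing sides nest should show that the inverted-$K_4$ condition singles out exactly the drawing $\EkS$ (all other placements either fail to produce inverted $K_4$'s, i.e. are h-convex, or produce a forbidden $K_5$). The main obstacle is exactly this last enumeration: showing that ``convex, six vertices, contains inverted $K_4$'s, minimal'' has $\EkS$ as its only solution. I expect to handle it by using Corollary \ref{co:determinedSide} (a non-convex side of a 3-cycle is witnessed by a crossing $K_4$ containing it) to translate the inverted-$K_4$ data into a precise statement about which 3-cycles of $D[K_6]$ have which convex sides, and then matching that pattern against the short list of convex drawings of $K_6$ produced by the Structure Theorem.
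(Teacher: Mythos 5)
Your overall skeleton (reduce both directions to Lemma \ref{lm:invertedK4s}) matches the paper's, and the forward direction is fine --- though the paper gets it even more cheaply: h-convexity is inherited by induced subgraphs and $\EkS$ is not h-convex, so no isomorph of it can appear. The converse, however, has a genuine gap at exactly the point you flag as the ``first task.'' An inverted pair $J,J'$ need not fit inside six vertices: since $|V(J)\cup V(J')|=8-|V(J)\cap V(J')|$, the union has size $6$ precisely when the two $K_4$'s share exactly two vertices, so your proposed minimality conclusion ($|V(J)\cup V(J')|=6$ together with $|V(J)\cap V(J')|\le 1$) is arithmetically impossible. Moreover, if the union has $7$ or $8$ vertices, discarding vertices destroys one of the two $K_4$'s, so the restricted $K_6$ need not contain an inverted pair at all, and the sketched ``relocate a vertex to shrink the union'' step is not justified. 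Producing an inverted pair whose triangles share an edge is the real content here, and the paper does it constructively: starting from $D[T_1]\subseteq \Delta_{T_2}$ with $\Delta_{T_1}\not\subseteq\Delta_{T_2}$, it isolates the vertex $z$ of $T_1$ separated from $T_2$ by the edge $wx$ of $J_1$, notes that $T_2+z$ is a non-crossing $K_4$ whose face $\Gamma$ containing $D[T_1]$ is bounded by a 3-cycle $T_3$ sharing an edge with $T_2$, shows $\Gamma$ is not the convex side of $T_3$, and then invokes Corollary \ref{co:determinedSide} to get a witness $v_3\in\Gamma$ with $D[T_3+v_3]$ a crossing $K_4$; the fourth vertex $v_2$ of $J_2$ supplies the other witness. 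The six vertices are those of $(T_2\cup T_3)+\{v_2,v_3\}$.

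The second half of your plan --- apply the Structure Theorem to $D[K_6]$ and enumerate --- is viable in principle, since the theorem for $n=6$ says precisely that $\EkS$ is the unique convex non-h-convex drawing of $K_6$; but that enumeration is the bulk of the proof and you have not carried it out. The paper instead exploits the shared edge of $T_2$ and $T_3$: with $D[v_2]\notin\Delta_{T_2}$ and $v_3\in\Gamma$, convexity forces the routing of every edge of the $K_6$ except $v_2v_3$ and $v_3a$, leaving two labelled cases, and in each case the unique remaining routings are checked to give $\EkS$ by exhibiting the two face-bounding 4-cycles. So as written your proposal is a plan with the two hardest steps missing, and the first of them (getting the inverted pair down to six vertices) is stated in a form that cannot be repaired without essentially reproducing the paper's construction.
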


\begin{cproof}
Since h-convexity is evidently inherited by induced subgraphs, no h-convex drawing of $K_n$ can contain $\EkS$.  Conversely, suppose $D$ is not h-convex; we show $D$ contains $\EkS$.  

By Lemma \ref{lm:invertedK4s}, there exist isomorphs $J_1$ and $J_2$ of $K_4$ that are inverted in $D$.  For $i=1,2$, let $T_i$ be a 3-cycle in $J_i$ with convex side $\Delta_{T_i}$ such that $D[T_1]\subseteq \Delta_{T_2}$ and $\Delta_{T_1}\not\subseteq \Delta_{T_2}$.  

Let $w$ be the vertex of $J_1$ not in $T_1$; $D[w]$ is separated from $D[T_2]$ by $D[T_1]$.  Let $x$ be the vertex of $T_1$ such that $D[wx]$ crosses $D[T_1]$.  Complete $D[wx]$ to a simple closed curve $\gamma$ by adding a segment on the non-convex side of $D[T_1]$ joining $D[w]$ and $D[x]$.  Clearly $\gamma$ separates the two vertices of $T_1-x$\rbrnew{.  Moreover, $D[T_1]$ and, therefore, $D[w]$ as well, are all contained in $\Delta_2$.  Convexity implies $D[J_1]\subseteq \Delta_2$.  Thus, $\gamma$ also separates one of the vertices of $T_1-x$} from $D[T_2]$; let $z$ be the one separated from $T_2$ by $\gamma$ and let $y$ be the other.  

Since $D[T_1]\subseteq \Delta_{T_2}$, $D[T_2+z]$ is a non-crossing $K_4$.  If any of the edges from $z$ to $T_2$ crosses $T_1$, then we have proof that the side $\Delta_{T_1}$ of $D[T_1]$ is not convex, a contradiction.  Therefore, $D[T_1]$ is contained in a face $\Gamma$ of $D[T_2+z]$ that is incident with $z$.  It follows that $w$ is also in $\Gamma$.

  Let $a$ be the vertex of $T_2$ not incident with $\Gamma$. The edge $wx$ has both its ends in $\Gamma$.   Since $\gamma$ separates $z$ from $T_2$, $\gamma$ must cross $za$ and, therefore, is not contained in $\Gamma$.  It follows that $\Gamma$ is not the convex side of the 3-cycle $T_3$ that bounds $\Gamma$.  
  
Evidently, $D[T_3]\subseteq \Delta_{T_2}$ and $\Delta_{T_3}\not\subseteq \Delta_{T_2}$.   Corollary \ref{co:determinedSide} implies that there is a vertex $v_3$ such that $v_3\in \Gamma$ and $D[T_3+v_3]$ is a crossing $K_4$.  Because $T_2$ is in the isomorph $J_2$ of $K_4$, there is a vertex $v_2$ in $J_2$ that is not in $T_2$.  Since $D[J_2]$ is a crossing $K_4$, $D[v_2]\notin \Delta_{T_2}$.  

We now consider the isomorph of $K_6$ consisting of $(T_2\cup T_3)+\{v_2,v_3\}$.    Because $D[(T_2\cup T_3)+v_2]$ is contained in $\Delta_{T_3}$, no edge from $v_3$ to a vertex in $T_2\cup T_3$ can cross $D[T_3]$.   In particular,  (recall that $a$ is the vertex of $T_2$ not in $T_3$) $D[v_2a]$ does not cross $D[T_2\cup T_3]$.   
Let $b$ be the vertex of $T_2$ such that $D[v_2b]$ crosses $D[T_2]$ and let $c$ be the third vertex of $T_2$.

Completely symmetrically, letting $a'$ be the vertex of $T_3$ that is not in $T_2$, $D[v_3a']$ does not cross $D[T_2\cup T_3]$.  As both $D[a']$ and $D[v_2]$ are in $\Delta_{T_3}$, the edge $a'v_2$ cannot cross $T_3$.  Since $D[v_2b]$ crosses $D[ac]$ but not $D[T_3]$, it must also cross $D[aa']$.  It follows that $D[v_2a']$ crosses only $D[ac]$.

  Let $b'$ be the one of $b$ and $c$ such that $D[v_3b']$ crosses $D[T_3]$ and let $c'$ be the other.  
There are two cases to consider:  $b=b'$ and $c=c'$; or $b=c'$ and $c=b'$.  \rbrnew{Note that, in each case, convexity and the definitions of $b$ and $b'$ determine the routings of all the edges except} $v_2v_3$ and $v_3a$.  

Let $T_4$ be the 3-cycle induced by $b$, $v_2$ and $c$.  Since $D[ac]$ crosses $D[v_2b]$, the \redit{convex side} of $D[T_4]$ is the side that contains $v_3$.   Thus, $D[v_2v_3]$ must be contained in this side of $D[T_4]$.   In the case $b=b'$, $D[v_3b]$ crosses $D[ca']$, $D[a'v_2]$, and $D[aa']$.   Thus, the only routing for $D[v_2v_3]$ is across $D[ac]$ and $D[a'c]$.  In the case $b=c'$, the only routing for $D[v_2v_3]$ is across $D[ac]$, $D[aa']$, and \rbrnew{$D[a'b]$}.  In both cases there is only one routing available for $D[v_3a]$.  

To see in each case that these drawings are both $\EkS$, focus on the face-bounding 4-cycles induced by $b,a',v_3,c$ and $b,a,v_2,c$.  \end{cproof}

Our last major result of this section is that heredity is determined by the $K_4$'s.

\begin{lemma}\label{lm:heredityFourPoints}
Let $D$ be a drawing of $K_n$ and, for each 3-cycle $T$ of $K_n$, let $\Delta_T$ be one of the closed discs bounded by $D[T]$.  Let $\mathcal C$ be the set of all these $\Delta_T$.  Then $\mathcal C$ is a set of h-convex sides if and only if both of the following hold:
\begin{enumerate}
\item each $\Delta_T$ has the \fpp; and
\item\label{it:atLeastThreeFaces} for each non-crossing $K_4$, at least three of the four (closed) faces of the non-crossing $K_4$ are in $\mathcal C$. 
\end{enumerate}
\end{lemma}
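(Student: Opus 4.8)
The forward direction is easy: if $\mathcal C$ is a set of h-convex sides, then in particular each $\Delta_T$ is convex, hence has the \fpp; and for a non-crossing $K_4$ with vertices $a,b,c,d$, the four closed faces are the three ``outer'' triangular faces (say the ones bounded by $abc$, $abd$, $acd$ when $d$ is the ``central'' vertex) together with the unbounded face bounded by the outer 3-cycle, but more usefully one observes that three of the four faces are the sides $\Delta_{T}$ that must be selected by heredity: the outer 3-cycle $T_{\mathrm{out}}$ contains the other three 3-cycles on its bounded side, so the chosen $\Delta$'s for those three must be nested inside $\Delta_{T_{\mathrm{out}}}$, forcing three of the four faces of the $K_4$ into $\mathcal C$. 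So the content is the converse.

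For the converse, assume (1) and (2). By Corollary \ref{co:badSide}, (1) alone already gives that every $\Delta_T$ is convex, so $D$ is a convex drawing and each $\Delta_T$ is a convex side. It remains to show the family $\mathcal C$ is \emph{hereditary}: whenever $D[T']\subseteq\Delta_T$, then $\Delta_{T'}\subseteq\Delta_T$. I would argue by contradiction: suppose $D[T']\subseteq\Delta_T$ but $\Delta_{T'}\not\subseteq\Delta_T$; write $w$ for the vertex of $T$ not relevant and instead focus on the six vertices $V(T)\cup V(T')$, noting $|V(T)\cup V(T')|\le 6$. The plan is to reduce to a bounded-size configuration and then invoke a $K_4$-level hypothesis. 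Concretely, since $\Delta_{T'}\not\subseteq\Delta_T$ but $D[T']\subseteq\Delta_T$, the 3-cycle $D[T']$ separates its ``bad side'' $\Delta_{T'}$ (partly) from some vertex or point outside $\Delta_T$; pick a vertex $x$ of $T$ on $D[T]$ — which lies outside the interior of $\Delta_T$ — so that $D[x]\notin\Delta_{T'}$ forces, via convexity of $\Delta_{T'}$ applied inside a suitable $K_5$ on $V(T')\cup\{x,?\}$, a crossing $K_4$ on $T'$ plus a fourth vertex. The cleanest route is: the pair $(T,T')$ being ``inverted'' in the sense of Definition \ref{df:invertedK4s} would, by Lemma \ref{lm:invertedK4s}, already contradict h-convexity if we knew $D$ were h-convex — but that is what we are trying to prove, so instead I push the non-nesting down to a $K_4$.

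Here is the key step I expect to carry the proof. Take $T=abc$ with $\Delta_T$ its chosen side and $T'=def$ with $D[T']\subseteq\Delta_T$ but $\Delta_{T'}\not\subseteq\Delta_T$. Since $\Delta_{T'}\not\subseteq \Delta_T$, the side $\Delta_{T'}$ contains points of the complement of $\Delta_T$, hence contains (in its interior) a point of $D[T]$, i.e.\ $\Delta_{T'}$ meets $D[abc]$; because closed edges meet at most once, $D[T']$ must actually cross an edge of $T$ — say $D[T']$ crosses $D[bc]$ — and then by the parity/crossing count as in Lemma \ref{lm:planarK4convex} it crosses the edges of $T$ an even number of times into $\Delta_T$, yet $D[T']\subseteq\Delta_T$, a contradiction unless one of $d,e,f$ coincides with a vertex of $T$. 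Handling the overlap cases $|V(T)\cap V(T')|\in\{1,2\}$ is exactly where the $K_4$-hypothesis (\ref{it:atLeastThreeFaces}) enters: when $T$ and $T'$ share two vertices, $V(T)\cup V(T')$ is a $K_4$, and (\ref{it:atLeastThreeFaces}) forces, for a non-crossing $K_4$, the three nested faces to be in $\mathcal C$, which is precisely the instance of heredity needed; when $D[J]$ is a crossing $K_4$ there is a unique convex side for each of its 3-cycles (Observation \ref{obs:crossingK4}), and one checks the chosen $\Delta$'s are automatically nested. When $T,T'$ are vertex-disjoint one runs the argument inside each $K_5$ on $V(T)\cup\{d\}$ etc., using convexity (Corollary \ref{co:badSide}) and Corollary \ref{co:twoPlanarToTriangle} to conclude $D[T']$ and $\Delta_{T'}$ both lie inside $\bigcap_{x\in V(T)}(\text{chosen side through }x)=\Delta_T$, contradiction.

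The main obstacle will be the bookkeeping in the mixed case where $T$ and $T'$ share exactly one vertex: there $V(T)\cup V(T')$ spans a $K_5$, not a $K_4$, so hypothesis (\ref{it:atLeastThreeFaces}) does not apply directly to that $K_5$; instead I expect to have to peel off one vertex, apply (\ref{it:atLeastThreeFaces}) to the two $K_4$'s obtained by adding one vertex of $T'$ to $T$ (and symmetrically), and patch the conclusions together — essentially re-running the inductive nesting argument of Lemma \ref{lm:invertedK4s} but one step at a time and reading off each step from the $K_4$ data. I would organize this as: (i) reduce, via Corollary \ref{co:badSide}, to proving heredity; (ii) show heredity for pairs $T,T'$ lying in a common $K_4$ directly from (\ref{it:atLeastThreeFaces}) and Observation \ref{obs:crossingK4}; (iii) bootstrap to arbitrary pairs by the argument that any violation of heredity produces, after restricting to at most five of the six involved vertices and using convexity, a violation at the $K_4$ level. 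Step (iii), and in particular certifying that no new crossings are introduced when we ``complete'' $D[T']$ to test separation, is the delicate part, and the no-double-crossing convention plus Lemma \ref{lm:planarK4convex} are the tools I would lean on.
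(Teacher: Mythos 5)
Your reduction of the converse to proving heredity (via Corollary \ref{co:badSide}) and your instinct that any heredity violation must be pushed down to the $K_4$ level are both right, but the mechanism you propose has a genuine gap. The false step is the claim that if $D[T']\subseteq\Delta_T$ and $\Delta_{T'}\not\subseteq\Delta_T$ then $D[T']$ must cross an edge of $T$ ``unless one of $d,e,f$ coincides with a vertex of $T$''. It does not: two vertex-disjoint 3-cycles can be strictly nested with no crossings at all, and the violation then consists purely in $\Delta_{T'}$ having been chosen as the \emph{outer} side of $D[T']$. This is exactly the configuration the lemma must exclude, and convexity alone does not exclude it --- if $T'$ lies in no crossing $K_4$, both of its sides are convex and the outer choice is consistent with hypothesis (1). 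Consequently your treatment of the vertex-disjoint case, which invokes only convexity and Corollary \ref{co:twoPlanarToTriangle} and never hypothesis (\ref{it:atLeastThreeFaces}), cannot succeed.

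What is missing is the descent the paper runs. Given a violating pair $(T_1,T_2)$ with $D[T_1]\subseteq\Delta_{T_2}$ and $\Delta_{T_1}\not\subseteq\Delta_{T_2}$, pick a vertex $a_1$ of $T_1$ not in $T_2$; since $D[a_1]\in\Delta_{T_2}$ and $\Delta_{T_2}$ has the \fpp, $D[T_2+a_1]$ is a \emph{non-crossing} $K_4$, and $D[T_1]$ lies in one of its faces incident with $a_1$, bounded by a 3-cycle $T_3$ that shares an edge with $T_2$ and the vertex $a_1$ with $T_1$. Either $(T_2,T_3)$ is already a violating pair sharing an edge, or one repeats with a second vertex $b_1$ of $T_1$ to produce $T_4$, obtaining in every case a violating pair sharing an edge. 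For an edge-sharing violating pair $(T_1',T_2')$, the vertex $a_1'$ of $T_1'$ not in $T_2'$ yields a non-crossing $K_4$ on $T_2'+a_1'$ in which the faces bounded by $T_1'$ and by $T_2'$ are both absent from $\mathcal C$, contradicting (\ref{it:atLeastThreeFaces}). Convexity is used precisely to certify that these intermediate $K_4$'s are non-crossing so that (\ref{it:atLeastThreeFaces}) applies; your sketch never manufactures the intermediate triangles $T_3$ and $T_4$, and your case split on $|V(T)\cap V(T')|$ does not substitute for that construction.
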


\begin{cproof}
Corollary \ref{co:badSide} shows that every side in $\mathcal C$ is convex if  every side in $\mathcal C$ satisfies the \fpp.  The converse is trivial from the definition of convex.  Thus, $\mathcal C$ is a set of convex sides if and only if every element of $\mathcal C$ satisfies the \fpp.

If $\mathcal C$ is hereditary, then suppose the face $\Gamma$ of a non-crossing $K_4$ is not in $\mathcal C$ and let $T$ be the 3-cycle bounding $\Gamma$.  Then $\Delta_T\in\mathcal C$ is the side of $\Gamma$ containing the fourth vertex of the non-crossing $K_4$; heredity implies all of the other faces of the $K_4$ are in $\mathcal C$, proving (\ref{it:atLeastThreeFaces}).

Conversely, suppose every $\Delta_T$ in $\mathcal C$ is convex and that (\ref{it:atLeastThreeFaces}) holds.   Suppose by way of contradiction that $T_1$ and $T_2$ are 3-cycles in $K_n$ such that: $\Delta_{T_1},\Delta_{T_2}\in \mathcal C$; $D[T_1]\subseteq \Delta_{T_2}$; and $\Delta_{T_1}\not\subseteq\Delta_{T_2}$.

Our immediate goal is to find 3-cycles $T'_1$ and $T'_2$ such that:  $\Delta_{T'_1},\Delta_{T'_2}\in \mathcal C$; $D[T'_1]\subseteq \Delta_{T'_2}$;  $\Delta_{T'_1}\not\subseteq\Delta_{T'_2}$; and, in addition, $T'_1$ and $T'_2$ have an edge in common.

Let $a_1$ be a vertex of $T_1$ not in $T_2$.  Because $a_1\in \Delta_{T_2}$, convexity implies $D[T_2+a_1]$ is a non-crossing $K_4$.   Because $D[T_2]\subseteq \Delta_{T_1}$,  $D[T_1]$ is contained in one of the three faces of $D[T_2+a_1]$ incident with $a_1$; let $b_2$ and $c_2$ be the vertices of $T_2$ incident with this face and let $T_3$ be the 3-cycle induced by $a_1$, $b_2$, and $c_2$.  
If $\Delta_{T_3}\not\subseteq \Delta_{T_2}$, then $T_2$ and $T_3$ may be used in the roles of $T'_1$ and $T'_2$.  

Therefore, we may assume $\Delta_{T_3}\subseteq\Delta_{T_2}$.  In this case, we note that $D[T_1]\subseteq \Delta_{T_3}$ but $\Delta_{T_1}\not\subseteq \Delta_{T_3}$.   However, \rbrnew{$a_1$} is common to $T_1$ and $T_3$.  If one of $b_2$ and $c_2$ is also in $T_1$, then we have the desired $T'_1$ and $T'_2$.  

Otherwise, let $b_1$ be a second vertex of $T_1$ \rbrnew{not in $T_2$}.  Then $D[b_1]\in \Delta_{T_3}$, so convexity implies $D[T_3+b_1]$ is a non-crossing $K_4$.  Again, $D[T_1]$ is contained in one of the faces of $D[T_3+b_1]$ incident with $D[a_1b_1]$.  Let $T_4$ be the 3-cycle bounding this face.  If \redit{this face is not the interior of $\Delta_{T_4}$}, then $T_3$ and $T_4$ play the roles of $T'_1$ and $T'_2$.  If \redit{the interior of this face is $\Delta_{T_4}$}, then $T_1$ and $T_4$ play the roles of $T'_1$ and $T'_2$.

So let $T'_1$ and $T'_2$ be two 3-cycles  such that:  $\Delta_{T'_1},\Delta_{T'_2}\in \mathcal C$; $D[T'_1]\subseteq \Delta_{T'_2}$;  $\Delta_{T'_1}\not\subseteq\Delta_{T'_2}$; and, in addition, $T'_1$ and $T'_2$ have an edge in common.  Let $a'_1$ be the vertex of $T'_1$ not in $T'_2$.  Then $a'_1\in \Delta_{T'_2}$ implies $D[T'_2+a'_1]$ is a non-crossing $K_4$.  We now have our contradiction:  the faces of this $K_4$ bounded by $T'_1$ and $T'_2$ are both not in $\mathcal C$.
\end{cproof}

Lemma \ref{lm:heredityFourPoints} suggests that h-convexity is determined by considering all sets of four points.  However, this is slightly misleading:  we need to have made the choices along the way for those 3-cycles not in any crossing $K_4$.  It is far from obvious how to make these choices without having checked all the other 3-cycles at each stage.  On the other hand, Theorem \ref{th:elevenK6} makes it clear that there is an O$(n^6)$ algorithm to determine if a drawing of $K_n$ is h-convex.   (It is O$(n^4)$ to check that the drawing is convex.)

\ignore{
\section{Knuth's axioms and drawings of $K_n$}\label{sec:knuth}

In this section, we introduce Knuth's axioms for CC-systems \cite{knuth}.   These turn out to be equivalent to uniform rank-3 oriented matroids and, therefore, to arrangements of pseudolines.  Aichholzer et al prove that, for a drawing $D$ of $K_n$, the existence of a face $F$ that is not contained in a face of any crossing $K_4$ incident with the crossing of the $K_4$ satisfies Knuth's five axioms and, therefore, is pseudolinear \cite{ahpsv}.  
This is closely related to our Theorem \ref{th:freeFace} and the main result of \cite{faceConvex} that face-convex drawings of $K_n$ are pseudolinear.

We begin with listing Knuth's axioms for a CC-system.  

\begin{enumerate}[label={\bf Axiom \arabic*:}, ref=Axiom \arabic*, leftmargin=77pt]
\item\label{AxOne} $pqr\Rightarrow rpq$;
\item\label{AxTwo} $pqr\Rightarrow \neg prq$;
\item\label{AxThree} $pqr \vee prq$;
\item\label{AxFour} $(pqr\wedge rqs\wedge srp)\Rightarrow pqs$; and
\item\label{AxFive} $(pqr\wedge pqs\wedge pqt\wedge prs \wedge pst)\Rightarrow prt$. 
\end{enumerate}

These were motivated by considering points in the plane and using $pqr$ to denote that $r$ is to the left of the oriented line through $p$ and $q$, oriented from $p$ to $q$.  Alternatively, traversing the 3-cycle $pqr$ in this order has the bounded side on the left of each of the three segments.  

We would like to interpret these axioms in terms of sides of 3-cycles in a drawing $D$ of $K_n$.   It turns out there are at least two natural ways to do this.

\medskip\noindent{\bf Interpretation 1.}  If $D$ is a convex drawing, then, for each 3-cycle $T$ of $K_n$, we fix $\Delta_T$ to be a convex side of $D[T]$.  We orient $T$ to agree with walking around the perimeter of $\Delta_T$ with our right hand against $T$.  If $p$, $q$, and $r$ are the vertices of $T$, we have in this way chosen precisely one of the cyclic orientations of $\{p,q,r\}$.  This selection necessarily satisfies \ref{AxOne}, \ref{AxTwo}, and \ref{AxThree}.

\medskip\noindent{\bf Interpretation 2.}  An alternative interpretation is to select a face $F$ of $D[K_n]$.  For each 3-cycle $T$, let $\Delta_T$ be the side of $D[T]$ disjoint from $F$.  Again, we orient $T$ to agree with walking around the perimeter of $\Delta_T$ with our right hand against $T$.  If $p$, $q$, and $r$ are the vertices of $T$, we have in this way chosen precisely one of the cyclic orientations of $\{p,q,r\}$.  This selection necessarily satisfies \ref{AxOne}, \ref{AxTwo}, \ref{AxThree}, and \ref{AxFour}.

\medskip With Interpretation 2, the two drawings of $K_5$ having 5 crossings have the same oriented 3-cycles.  See Figure \ref{fg:K5sSameTriangles}.

\begin{figure}[!ht]
\begin{center}
\scalebox{0.8}{\input{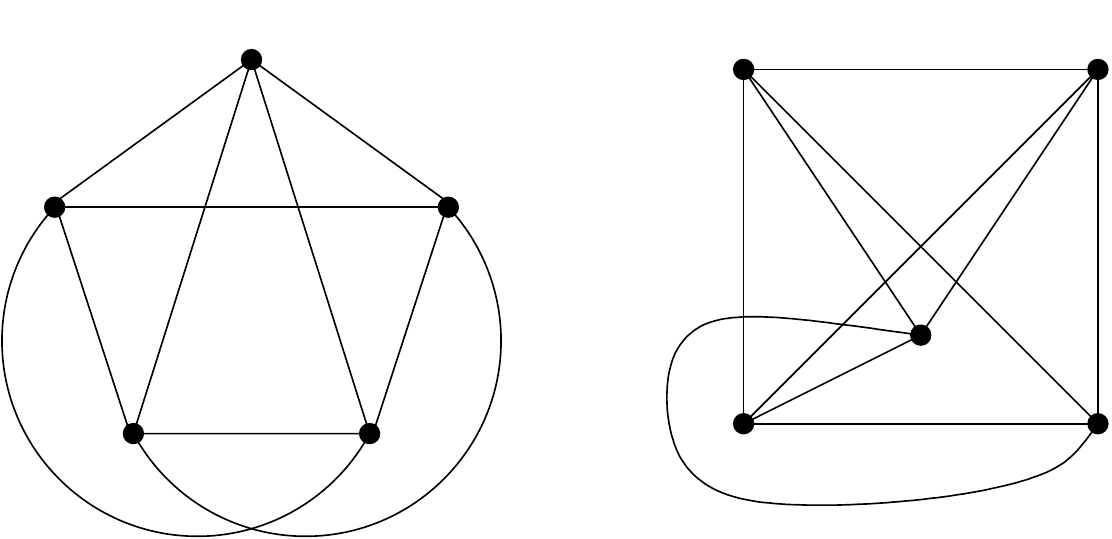_t}}
\caption{The two drawings of $K_5$ having 5 crossings.}\label{fg:K5sSameTriangles}
\end{center}
\end{figure}

\begin{lemma}
Let $D$ be an h-convex drawing.  With Interpretation 1, $D$ satisfies \ref{AxFive}.
\end{lemma}

\begin{cproof}  
Let $p$, $q$, $r$, $s$, and $t$ be any vertices of $K_n$ that satisfy the hypothesis of \ref{AxFive}, namely $(pqr\wedge pqs\wedge pqt\wedge prs \wedge pst)$.   We consider in turn which of  the three convex drawings of $K_5$ is isomorphic to the $K_5$ induced by $p$, $q$, $r$, $s$, and $t$.

In the convex drawings of $K_5$ with either 3 or 5 crossings, every 3-cycle is in a crossing $K_4$, so the convex sides of every 3-cycle are determined.  Moreover, if $pq$ were crossed by the edge $ab$, then we cannot have both $pqa$ and $pqb$, so $pq$ is uncrossed in $D$.  These observations help in determining the validity of \ref{AxFive}.

If the $K_5$ is the 5-crossing convex $K_5$, then $pq$ must be incident with the face bounded by the 5-cycle.  In order to have $prs$ and $pst$, we must have the 5-cycle be, in this cyclic order, $(p,q,r,s,t,p)$, showing that indeed we have $prt$.

If the $K_5$ is the 3-crossing convex $K_5$, then again $pq$ cannot be crossed, and so is incident with the face bounded by the 4-cycle $C$.  Up to symmetry, there are four possibilities.   Let $a$ be the vertex that is not in $C$ and let $J$ be the $K_4$ induced by $C$. 

If $pq$ is incident with the face of $D[J]$ containing $a$, then it is straightforward to see that $a=r$ and that $C$ is the 4-cycle $(p,q,s,t,p)$.   If $p$ is incident with the face of $D[J]$ containing $a$ while $q$ is not, then $s=a$ and $C=(p,q,r,t,p)$. If neither $p$ nor $q$ is incident with the face of $D[J]$ incident with $a$, then $s=a$ and $C=(p,q,r,t,p)$.  Finally, if $p$ is not incident with the face of $D[J]$ that contains $a$ and $q$ is incident with that face, then $r=a$ and $C=(p,q,s,t,p)$.   In all cases, $prt$ holds. 

In the remaining case, the drawing of $K_5$ has only one crossing.  For all but four of the 3-cycles, there are two choices for the convex side.  {\bf\large I don't remember how this argument goes right now, but it is essentially some case analysis.  Here heredity must play a role.}
\end{cproof}

For some time, we thought that satisfying \ref{AxOne}---\ref{AxFive} implies that the drawing $D$ is pseudolinear.  This is, however, inaccurate, since the non-convex drawing of $K_5$ having five crossings has triangles that satisfy all the axioms.  The correct interpretation  is that satisfying the five axioms yield a pseudolinear drawing of $K_5$ with those triangles as the ``right-hand against the wall" convex sides.  As accurately discussed in \cite{ahpsv}, some additional assumption is required to determine the crossings so that Gioan's Theorem \cite{gioan} may be applied, as was done in \cite{ahpsv}.  

{\bf\Large not accurately described in \cite{ahpsv}; be specific about shortcomings of Axiom 5 proof there.}

In any case, it seems unsurprising that h-convex combines with \ref{AxFour} to yield face-convex.

\begin{theorem}\label{th:hereditaryPlusAx4}
Let $D$ be an h-convex drawing of $K_n$ with witnessing set $\mathcal C$ 
of convex sides, one for each 3-cycle of $K_n$.  If $\mathcal C$ (with the right-hand on the wall rule) satisfies \ref{AxFour}, then $\mathcal C$ is a set of convex sides witnessing face-convexity of $D$.
\end{theorem}

\begin{cproof} 
The proof is by induction on $n$, with base $n=4$.  For the base, suppose first that $D[K_4]$ has a crossing.  Then the convex sides are determined, and these are the sides of each 3-cycle that do not contain the face of $D$ bounded by a 4-cycle.  

If $D[K_4]$ is non-crossing, then (as we have indicated earlier) heredity shows that at most one of the four faces is not a convex side.  It follows that we may assume that the three faces incident with the vertex $a$ are all in $\mathcal C$.  Then the right-hand rule tells us that $bca$, $cda$, and $dba$ are all in $\mathcal C$, for an appropriate labelling of $K_4$.  \ref{AxFour} implies $bcd$ is in $\mathcal C$, showing that the face bounded by $bcd$ is not the chosen convex side.  Therefore, $\mathcal C$ consists of the convex sides of a face-convex drawing of $K_4$.

For the inductive step, there are two cases.

\medskip\noindent{\bf Case 1:}  {\em there is a non-crossing $K_4$ in D.} 

\medskip  In this case, as shown in the base case, there is a vertex $a$ of the planar isomorph $J$ of $K_4$ that is incident with three faces of $D[J]$ that are in $\mathcal C$.  \ref{AxFour} implies that the fourth 3-cycle $T$ in $J$ has as its convex side in $\mathcal C$ the side of $D[T]$ that contains $a$.

Delete $a$ from $K_n$ to get $D-a$ and delete each 3-cycle in $\mathcal C$ that contains $a$ to get $\mathcal C-a$. Inductively $D-a$ and $\mathcal C-a$ satisfy the hypotheses.  Thus, there is a face $F$ of $D-a$ such that each element of $\mathcal C-a$ is disjoint from $F$.   No crossing can be incident with $F$, as then $F$ is in the unique convex side of some 3-cycle in the offending crossing $K_4$.  Thus, there is a cycle $C$ $K_n-a$ bounding $F$.   Notice that $a$, being on the side of $D[T]$ that is in $\mathcal C$ must have $D[a]$ on the side of $D[C]$ disjoint from $F$.

Let $b$ be any vertex of $K_n-a$. The edges joining $b$ to all the vertices of $C$  (or $C-b$ if $b$ is in $C$) are all on the non-$F$ side of $D[C]$.  Since they are all incident with $b$, then have no crossings.  Thus, $D[C+b]$ divides up the interior of $C$ into the convex sides of 3-cycles, each containing $b$.  
The vertex $a$ is inside one of these convex sides,  so it is joined to $b$ in the non-$F$ side of $D[C]$.  In particular, $F$ is also a face of $D[K_n]$.

Now let $T'$ be a 3-cycle in $K_n$ containing $a$ with convex side $\Delta_{T'}\in\mathcal C$ and let $u$ be one of the vertices of $T'$ other than $a$.   Joining $u$ to each of the vertices of $C$ puts $D[a]$ into one of the convex sides of the 3-cycles $T_u$ involving $u$ and two consecutive vertices of $C$; let $\Delta_{u}$ be this convex side.   

 Let $v$ be the third vertex of $T'$.  If $D[v]$ is also in $\Delta_{u}$, then convexity puts $D[T']$ inside $\Delta_{u}$ and heredity shows $\Delta_{T'}\subseteq \Delta_{u}$.  Therefore, $\Delta_{T'}$ is also disjoint from $F$.

In the alternative, $D[v]$ is not in $\Delta_{u}$.  Evidently, $D[av]$ crosses $D[T_u]$.   Since we have already shown that $D[av]$ is contained in the side of $D[C]$ disjoint from $F$, $D[av]$ cannot cross any edge of $D[C]$.  In particular, it does not cross the edge in $T_u-u$, so it must cross one of the edges incident with $u$.  But now $T'$ is in a crossing $K_4$ in $D$; let $J'$ be this $K_4$.  

If $F$ is in a face of $D[J']$ incident with a crossing, then the convex side of the 3-cycle $J'-a$ contains $F$, a contradiction.  Therefore, the convex side of $T'$ is disjoint from $F$, as required.

\medskip\noindent{\bf Case 2:}  {\em every $K_4$ is non-planar in $D$.}

\medskip We prove the following fact, which is part of the motivation for the contents of the next section.   

\begin{definition}  A {\em natural drawing\/} $D$ of $K_n$ has a Hamilton cycle $H$ of $K_n$ bounding a face.
\end{definition}

It is easy to see that, in a natural drawing $D$ of $K_n$ with a Hamilton cycle $H$ bounding a face $F$, all edges of $K_n$ not in $H$ are drawn on the other side of $D[H]$ from $F$.  Thus, every $K_4$ has a crossing, so $\crn(D)=\binom n4$.  Moreover, $D$ is face-convex.  This yields one direction in the following.

\begin{lemma}\label{lm:naturalCharacterization}  Let $D$ be a drawing of $K_n$.  Then $D$ is a convex drawing of $K_n$ with $\binom n4$ crossings if and only if $D$ is a natural drawing of $K_n$.
\end{lemma}

\begin{cproof}
From the remarks preceding the lemma, it suffices to assume $D$ is a convex drawing of $K_n$ with $\binom n4$ crossings and show that $D$ is a natural drawing of $K_n$.

We proceed by induction, with the base case $n=4$ being trivial.  Suppose now that $n\ge 4$.  Let $v$ be any vertex of $K_n$ and apply the inductive assumption to $K_n-v$, so $D[K_n-v]$ has a Hamilton cycle $H$ bounding a face $F$ of $D[K_n-v]$ and all other edges of $K_n-v$ are drawn on the other side of $D[H]$ from $F$.   Every 3-cycle in $K_n-v$ has it convex side disjoint from $F$.

Suppose by way of contradiction that $D[v]$ is on the side of $D[H]$ disjoint from $F$.  Then $D[v]$ is in the convex side of some 3-cycle $T$ of $K_n-v$ and convexity implies $D[T+v]$ is planar, a contradiction.

Therefore, $D[v]$ is in $F$.  Since every $K_4$ involving $v$ has a crossing in $D$, there is an edge $vw$ that crosses an edge $ab$, and $ab$ is in $H$.   It will suffice to prove that $D[va]$ and $D[vb]$ are not crossed and every other edge $vx$ crosses $ab$.

We start by noting the following.

\medskip\noindent{\bf Fact 1:}  {\em if $vx$ crosses the edge $cd$ of $H$, then this is the only crossing of $vx$ with $H$.}

\medskip Let $T$ be the 3-cycle induced by $c$, $d$, and $x$.  Then $vx$ cannot cross $D[T]$ a second time.  In particular, $D[vx]$ does not cross $H$ a second time.

\medskip The next fact is the main part of the proof.

\medskip\noindent{\bf Fact 2:}  {\em if $vx$ crosses the edge $cd$ of $H$, then $cd=ab$.}

\medskip In the alternative, $cd\ne ab$.  We may use the symmetry between $a$ and $b$ to suppose that $cd$ is in the $aw$-subpath of $H-ab$.   

Assume first that $x\ne a$.  
Let $J$ be the $K_4$ induced by $v$, $b$, $x$ and $w$ and let $T$ be the 3-cycle $J-b$.  Then, because $a$ and $b$ are on different sides of $D[T]$ and $D[bx]$ crosses $D[vw]$,  $D[a]$ is on the convex side of $D[T]$, showing $D[T+a]$ is a planar $K_4$.  This contradiction shows $x=a$.

Note that $c\ne a$ and $d\ne a$.  We may choose the labelling so that $c$ is nearer to $a$ in the $aw$-subpath of $H-ab$ than $d$ is.  In this case, let $T'$ be the 3-cycle induced by $v$, $a$, and $w$.  The vertex $b$ shows that the convex side of $D[T']$ is the side not containing $D[b]$; this is the side containing $D[c]$.  But now we have the contradiction that $D[T'+c]$ is a planar $K_4$, completing the proof of Fact 2.

\medskip It follows that $ab$ is the only edge of $H$ crossed by edges incident with $v$.  In particular, $va$ and $vb$ do not cross $H$ in $D$.  If $c$ were a third vertex such that $vc$ does not cross $H$ in $D$, then we would have the planar $K_4$ induced by $v$, $a$, $b$, and $c$.  It follows that every edge incident with $v$ other than $va$ and $vb$ cross $ab$ in $D$.  Thus, $H'=(H-ab)+\{va,vb\}$ is the required Hamilton cycle in $K_n$ that shows $D$ is a natural drawing of $K_n$.
\end{cproof}

We now complete the proof of Theorem \ref{th:hereditaryPlusAx4}.  Lemma \ref{lm:naturalCharacterization} shows that, in Case 2, $D$ is a natural drawing of $K_n$.  Every convex side is disjoint from the face bounded by the Hamilton cycle and so this is a face-convex drawing, as required.
\end{cproof}
}

We conclude this section with an observation related to the \redit{Structure} Theorem \ref{th:structure}.

\begin{lemma}\label{lm:barJforHconvexIsFconvex}
Let $D$ be an h-convex drawing of $K_n$ consisting of a natural $K_r$ (with $r\ge 4$) and all other points inside the natural $K_r$.  Then $D$ is f-convex.
\end{lemma}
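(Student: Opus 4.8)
The plan is to show that $D$ is f-convex with the outer face of the natural $K_r$ as free face. Write $J$ for the natural $K_r$, $C_J$ for its facial $r$-cycle, and $\Gamma$ for the face of $D[J]$ bounded by $C_J$. Since by hypothesis every vertex of $K_n$ outside $V(J)$ lies inside $J$, parts~(\ref{it:inside1}) and~(\ref{it:inside2}) of Lemma~\ref{lm:naturalOtherVertices}, together with the naturality of $D[J]$, show that every edge of $K_n$ is drawn either along $C_J$ or strictly inside $J$; in particular $\Gamma$ is a face of $D$. So it suffices to verify the condition of Theorem~\ref{th:freeFace}: for every subgraph $J'$ of $K_n$ with $D[J']$ a crossing $K_4$, the face $\Gamma$ is contained in the face of $D[J']$ bounded by the $4$-cycle $C'$ of $J'$. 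I will also use the routine fact that, for a crossing $K_4$, the face bounded by its $4$-cycle lies, for each vertex $w$ of $J'$, on the side of $D[J'-w]$ that contains $w$.

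First I would handle the case $V(J')\subseteq V(J)$: then $D[J']$ is a subdrawing of the natural (hence f-convex) drawing $D[J]$, which has $\Gamma$ as a free face by the remarks opening Section~\ref{sec:natural}, so $\Gamma$ lies in the $4$-cycle face of $D[J']$ (using the definition of f-convexity and Observation~\ref{obs:crossingK4}). Hence I may assume $J'$ has a vertex inside $J$, and I argue by contradiction: suppose $\Gamma$ lies in one of the four triangular faces $T^*$ of $D[J']$ rather than in the $4$-cycle face, say $T^*$ incident with the crossing point $\times'$ of $D[J']$ and with the edge $e'$ of $C'$. The two edges of $J'$ not in $C'$ cross at $\times'$; no edge of $C_J$ is crossed in $D$ (a $C_J$-edge cannot be crossed by another $C_J$-edge, nor by an edge strictly inside $J$), so these two edges lie strictly inside $J$ and hence $\times'$ lies strictly inside $J$. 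On the other hand $\Gamma\subseteq T^*$ forces $C_J=\partial\Gamma\subseteq\overline{T^*}$, while the only vertices of $K_n$ on $\partial T^*$ are the two endpoints of $e'$; so every vertex of $C_J$ other than (possibly) these two endpoints lies in the interior of the single face $T^*$ of $D[J']$.

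This cramped configuration must be ruled out using h-convexity — which is genuinely needed here, since a natural $K_5$ together with a single interior vertex can be convex (indeed, $\k611$) without being f-convex. The step I expect to be the main obstacle is extracting the contradiction from the previous paragraph. My plan is to choose two vertices $c,c'$ of $C_J$ lying in the interior of $T^*$ and not in $V(J')$ — a short counting argument produces these when $r$ is not too small, using that at most one of the two vertices of $J'$ other than the endpoints of $e'$ can lie on $C_J$ (otherwise we are in the already-settled case) — and then to show that $D[J'\cup\{c,c'\}]$ is a copy of $\k611$, contradicting Theorem~\ref{th:elevenK6}. Verifying this isomorphism is the delicate part: one must determine the routing of every edge among $J'\cup\{c,c'\}$, which is pinned down by convexity (the convex side of each $3$-cycle of a crossing $K_4$ is the side not containing the fourth vertex, by Observation~\ref{obs:crossingK4}), by the fact that $\Gamma$, and hence all of $C_J$, lies on a prescribed side of the relevant $3$-cycles, and by Corollary~\ref{co:determinedSide}. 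A possibly cleaner alternative for this step is to exhibit directly two inverted $K_4$'s in $D$ — one of them $J'$, the other a $K_4$ on four consecutive vertices of $C_J$, which by naturality of $D[J]$ is a crossing $K_4$ whose $4$-cycle face contains $\Gamma$ — thereby contradicting Lemma~\ref{lm:invertedK4s}. Finally, the residual small values of $r$ (notably $r=4$, where the counting for $c,c'$ can fail) reduce to inspecting a drawing of $K_5$ or $K_6$ of the prescribed shape and can be checked directly.
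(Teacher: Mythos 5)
Your setup is fine as far as it goes: $\Gamma$ is indeed a face of $D$, the case $V(J')\subseteq V(J)$ is correctly dispatched, and if $\Gamma$ lies in a triangular face $T^*$ of a crossing $K_4$ $D[J']$ then all of $C_J$ lies in $\overline{T^*}$, so in particular the two vertices $c,d$ of $J'$ off $\partial T^*$ lie strictly inside $J$. The gap is exactly the step you flag as the main obstacle, and the ``cleaner alternative'' you offer for it does not work. Write $e'=ab$ for the edge of the $4$-cycle of $J'$ on $\partial T^*$. The only $3$-cycles $T'$ of $J'$ whose side $\Delta_{T'}$ (away from the fourth vertex) contains $T^*$ are $abc$ and $abd$; each such $\Delta_{T'}$ is the convex side of $T'$, and since all vertices of $K_r$ lie in the closed disc $\Delta_{T'}$, convexity forces $D[K_r]\subseteq\Delta_{T'}$, hence $D[T]\subseteq\Delta_{T'}$ for \emph{every} $3$-cycle $T$ of $K_r$. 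But then, the complement of $\Delta_{T'}$ being a connected open disc disjoint from $D[T]$, the condition $\Delta_T\not\subseteq\Delta_{T'}$ holds precisely when the fourth vertex of $J'$ (i.e.\ $d$ for $T'=abc$) lies in the interior of $\Delta_T$. If $K$ is the $K_4$ on four consecutive vertices $v_1v_2v_3v_4$ of $C_J$, the union of the $\Delta_T$ over the four $3$-cycles $T$ of $K$ is just the non-$\Gamma$ side of the $4$-cycle $v_1v_2v_3v_4$ --- a region hugging that stretch of $C_J$ --- and nothing places $c$ or $d$ there. So $J'$ and this $K$ need not be inverted, and Lemma~\ref{lm:invertedK4s} is not contradicted. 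What you actually need is a $3$-cycle $T_0$ of $K_r$ whose non-$\Gamma$ side contains (the face of $D[K_r]$ housing) $d$; such a $T_0$ is dictated by where $d$ sits, not by consecutiveness on $C_J$, and its existence relies on the further fact --- true, but not established in your sketch --- that every non-$\Gamma$ face of a natural $K_r$ is contained in the non-$\Gamma$ side of some $3$-cycle of $K_r$. The $\k611$ route inherits the same difficulty and more: two extra vertices of $C_J$ do not even give you a crossing $K_4$ inside $K_r$, so there is no reason $D[J'\cup\{c,c'\}]$ is non-h-convex, let alone equal to $\k611$.

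For comparison, the paper's proof is a few lines and bypasses Theorem~\ref{th:freeFace} and crossing $K_4$'s entirely: if the $\Gamma$-side of a $3$-cycle $xyz$ is convex, then, all vertices of $K_r$ being incident with $\Gamma$, convexity pushes all of $D[K_r]$ into that closed side; hence $D[xyz]$ lies in the non-$\Gamma$ side of some $3$-cycle of $K_r$, and heredity then makes the non-$\Gamma$ side of $xyz$ convex as well, so $\Gamma$ witnesses f-convexity. Your ``find the right $T_0$'' repair amounts to the same computation, but the heredity formulation avoids verifying the inverted-$K_4$ conditions by hand; if you want to keep your framework, you should prove the covering fact about the faces of a natural $K_r$ and then select $T_0$ accordingly.
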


\begin{cproof}
Let $F$ be the face of $D$ bounded by the $r$-cycle $C_r$ in $K_r$.  Suppose $xyz$ is some 3-cycle such that the side of $xyz$ containing $F$ is convex.  There is at least one of $x,y,z$ that is not in the $K_r$.  (Since $r\ge 4$, all 3-cycles in $K_r$ are crossing with any fourth vertex of \redit{$K_r$)}.

Being incident with $F$, any vertex of $K_r$ not in $\{x,y,z\}$ is on the same side of the 3-cycle $xyz$ as $F$.  Thus, for any two vertices $u,v$ of $K_r$ (whether in $\{x,y,z\}$ or not), convexity of the $F$-side of $xyz$ shows that $D[uv]$ is contained in the closed disc bounded by $xyz$ and containing $F$.  

It follows that $xyz$ is contained in a 3-cycle consisting only of vertices in the $K_r$.  Now heredity implies that the other side of $xyz$ is also convex.  That is, $F$ witnesses the convexity of every 3-cycle, as required. \end{cproof}

\ignore{
\section{Convex drawings with few points in any crossing $K_4$}\label{sec:drawingsWithFewPtsK4}

{\bf\large I am not sure I want to omit this entirely.  I really like Lemma \ref{lm:naturalCharacterizationReprise}.}

Motivated by Lemma \ref{lm:naturalCharacterization}, we study other forms of local constraints that force a global structure.

We would like to extend this to other limitations on the number of vertices that are on the inside of each crossing $K_4$.   For any integer $p\ge 0$, a (convex) drawing $D$ of  $K_n$ is {\em $p$-constrained\/} if, for every natural $K_4$ in $D$, there are at most $p$ vertices inside the $K_4$.  Lemma \ref{lm:naturalCharacterizationReprise} completely determines the convex drawings in the case $p=0$.

It is not surprising that if $D$ is a $p$-constrained convex drawing of $K_n$, then every natural $K_r$ ($r\ge 4$) has a limited number of points inside.  We set $f(n,p)$ to be the maximum number of vertices a $p$-constrained natural drawing of $K_n$ can have.

\begin{lemma}\label{lm:f(n,p)}  Let $p\ge 0$ and $n\ge 5$ be any integers. Then:
\begin{enumerate}
\item \label{it:f(n,0or1)} $f(n,0)=0$ and $f(n,1)=1$;
\item \label{it:f(n,p)Trivial} $f(n,p)\le f(n+1,p)$ and $f(n,p)\le f(n,p+1)$;
\item\label{it:addTwo}  $f(n+2,p)\le p+f(n,p)$;
\item \label{it:f(n,2)} $f(n,2)=n-2$;
\item \label{it:f(n,ab)}  if $p=ab$, then $f(n,p)\ge a\,f(n,b)$;  
\item \label{it:f(even,even)} if $n$ and $p$ are both even, then $f(n,p) = p(n-2)/2$; and 
\item \label{it:otherCases}  $(n-3)(p-1)/2\le f(n,p)\le (n-1)(p+1)/2$.
\end{enumerate}
\end{lemma}

\begin{cproof}
That $f(n,0)=0$ is obvious.  That $f(n,1)=1$, suppose there are two points inside a natural $K_n$.  For each vertex $v$ of $K_n$, inductively $D[K_n-v]$ has at most one point inside.  Therefore, one of the two points is inside the 3-cycle consisting of $v$ and its two neighbours in the facial $n$-cycle.  Each face of $D$ inside this 3-cycle is in either one or two of these $n$ different 3-cycles.  Since there are two points and at least five 3-cycles, this is impossible.

Item \ref{it:f(n,p)Trivial} is trivial.  For (\ref{it:addTwo}), let $u,v$ be consecutive vertices in the facial cycle $C$ of length $n+2$.  Then $K_{n+2}-\{u,v\}$ is a natural $K_n$, so it has at most $f(n,p)$ vertices inside.  

Let $u'$ and $v'$ be the other neighbours of $u$ and $v$, respectively, in $C$.  Then any vertex inside the $K_{n+2}$ that is not inside $K_{n+2}-\{u,v\}$ is inside the natural $K_4$ induced by $u,v,u',v'$.  Since $D$ is $p$-constrained, there are at most $p$ such vertices.

To prove (\ref{it:f(n,2)}), we note that $f(4,2)=2$ is trivial.  We leave the proof that $f(5,2)=3$ to the reader.  Now induction on $n$ using (\ref{it:addTwo}) shows $f(n,2)\le n-2$.  The reverse inequality is proved by an easy example.

Next assume $p=ab$.  For any $b$-constrained convex drawing $D$ with a natural $K_n$, for each inside face $F$ of $D[K_n]$, if there are $s$ vertices in $F$, replace the $s$ points with $sa$ vertices.  It is a triviality that this new drawing (of a larger complete graph) is $ab$-constrained and (\ref{it:f(n,ab)}) follows.

For (\ref{it:f(even,even)}), the lower bound follows from (\ref{it:f(n,2)}) and (\ref{it:f(n,ab)}).  The upper bound is an easy induction using (\ref{it:addTwo}):  $f(4,p) = p$ and $f(n+2,p)\le p+f(n,p)\le p+\frac12p(n-2) = \frac12pn$.

Finally, for the upper bound in (\ref{it:otherCases}), let $n'$ be the one of $n$, $n+1$ that is even and let $p'$ be the one of $p$ and $p+1$ is even.  Then (\ref{it:f(n,p)Trivial}) and (\ref{it:f(even,even)}) imply that $f(n,p)\le f(n',p')=(n'-2)p'/2 \le (n-1)(p+1)/2$.   The lower bound is similar.  
\end{cproof}

One significant point that is helpful in understanding the structure of $p$-constrained convex drawings is in how the ``outside" points can be joined to the natural $K_r$.

\begin{lemma}\label{lm:pConstrainedLimitsOutside}
Let $p\ge 0$ be an integer.  Let $D$ be a convex drawing consisting of a natural $K_{2p+5}$ plus two points outside that are planarly joined to the natural $K_{2p+5}$.  Then $D$ is not $p$-constrained.
\end{lemma}

\begin{cproof}
Let $u$ and $v$ be the outside points and let $C$ be the facial Hamilton cycle in the natural $K_{2p+5}$.  Among all the facial 3-cycles in $D-v$ that are incident with $u$, let $ux_1x_{-1}$ be the one containing $D[v]$.  Extending the edge $x_1x_{-1}$ in both directions, $C$ consists of the path $(x_{-p-2},x_{-p},\dots,x_{-1},x_1,x_2,\dots,x_{p+2})$, plus one more vertex $z$ adjacent to both $x_{-p-2}$ and $x_{p+2}$.

Since $z$ and $v$ are on different sides of all the cycles $ux_ix_{-i}$, $i=1,2,\dots,p+2$, $vz$ must cross all of them.  Since $vz$ does not cross $x_ix_{-i}$, it crosses each of them exactly once.  We may assume, therefore, that $vz$ crosses all of $ux_1,ux_2,\dots,ux_{p+2}$.  

The $K_4$ consisting of $u,v,x_1,z$ has $ux_1$ crossing $vz$, so it is natural.  Also, all of $x_2,\dots,x_{p+2}$ are inside this natural $K_4$, showing that $D$ is not $p$-constrained.  
\end{cproof}

\begin{theorem}\label{th:pConstrained}
Let $p\ge 0$ be an integer.  Then there is an $N=N(p)$ such that, for $n\ge N$, any $p$-constrained drawing of $K_n$ has, for some $r\ge 4$, a $K_r$-subgraph $J$ such that $D[J]$ is natural and $\bar J$ has all but at most one vertex of $K_n$.
\end{theorem}

\bigskip

\begin{cproof}
Set 
$r=(2p+5)+(p+2)(p+1)$
and, with $M$ as in  Theorem \ref{th:bigKr}, set $N(p)=M(r)$.  For $n\ge N(p)$, any convex drawing $D$ of $K_n$ contains a natural $K_r$.  In particular, in the Structure Theorem \ref{th:structure}, the natural complete subgraph $J$ for which $|V(\bar J)|$ is as large as possible satisfies $|V(\bar J)|\ge r$.  

The point of $r$ is to combine with Lemma \ref{lm:f(n,p)} (\ref{it:otherCases}) to ensure that $J$ has at least $2p+5$ vertices.
Lemma \ref{lm:pConstrainedLimitsOutside} shows that there cannot be two vertices outside $J$.  \end{cproof}

{\bf\Large I would really hope that we can now prove that a $p$-constrained convex drawing of $K_n$, with $n\ge N(p)$, has at least $H(n)$ crossings.}

For h-convex drawings, we have the following further implication.

\begin{corollary}\label{co:pConstrainedHconvex} For each integer $p\ge 0$, there is an integer $L(p)$ such that, for $n\ge L(p)$, if $D$ is a $p$-constrained, h-convex drawing of $K_n$ and $n\ge N(p)$, then $\crn(D)\ge \tilde\crn(K_{n-1})\ge H(n)$. \end{corollary}

\begin{cproof}
The main point here is that the natural part of an h-convex drawing with points inside is f-convex.  See Lemma \ref{lm:barJforHconvexIsFconvex}.
\end{cproof}

We have the following complete characterization for $p=0$.
In particular, $N(0)=7$.  We remark that this is a relatively straightforward extension of the proof of Theorem \ref{th:pConstrained}.
The main point is to prove that if there are 3 points outside a natural $K_4$ and planarly joined to the natural $K_4$, then the drawing is not 0-constrained.

\begin{lemma}\label{lm:naturalCharacterizationReprise}
Let $n\ge 5$ and let $D$ be a 0-constrained convex drawing of $K_n$.   Then $D$ is one of:
\begin{enumerate}\item a natural drawing of $K_n$;  \item a natural drawing of $K_{n-1}$ with one additional point planarly joined to the $K_{n-1}$; and 
\item the unique optimal drawing of $K_6$. \hfill\eop
\end{enumerate}
\end{lemma}

 In the 1-constrained case, the fact that $f(n,1)=1$ helps in making the situation easier.

\begin{lemma}\label{th:oneVertexInCrossingK4}
Let $D$ be a convex drawing such that, for each isomorph $J$ of $K_4$ such that $D[J]$ is crossing, there is at most one vertex $u$ of $K_n$ such that $D[u]$ is on the crossing side of the facial 4-cycle in $D[J]$. If $D$ contains a natural $K_7$ and $n\ge 9$, then $D$ is either:
\begin{enumerate}
\item a natural drawing of $K_n$;
\item a natural drawing of $K_{n-1}$ with one point inside; 
\item a natural drawing of $K_{n-1}$ with one point outside that is planarly joined to the $K_{n-1}$; and
\item a natural drawing of $K_{n-2}$ with one point inside and one point outside that is planarly joined to the $K_{n-2}$.
 \hfill \eop
\end{enumerate}
\end{lemma}

}

\section{Suboptimal drawings of $K_n$ having either $\badTkF$ or $\badFkF$}\label{sec:K9}

In this section, we prove that a broad class of ``locally determined" drawings of $K_n$ are suboptimal.    This is the first theorem of its type.  The theorem requires the presence of either $\badTkF$ or $\badFkF$ in the drawing, but, for at least one such $K_5$, the occurrence is restricted.  This might be a first step towards showing that all optimal drawings of $K_n$ are convex.  

{This line of research was stimulated by Tilo Wiedera's computation (personal communication) showing that any drawing of $K_9$ that contains a $\badFkF$ has at least 40 crossings.  This is in line with Aichholzer's later computations (see the remark following the statement of Theorem \ref{th:oneBadK5} below).}

\oct{We also rethink the approach in \cite{mr} that $\crn(K_9)=36$.   This was done before convexity became known to us.  Using the fact that $\crn(K_7)=9$, it is easy to see that $\crn(K_9)\ge 34$.  At the end of this section, we show easily by hand that there is no non-convex drawing $D$ of $K_9$ such that $\crn(D)=34$.  Thus, to prove that $\crn(K_9)=36$, it suffices to consider convex drawings of $K_9$.}

\oct{We start with a drawing $D$ of $K_n$ that has either a $\badTkF$ or a $\badFkF$ that has only at most two vertices in any other $\badTkF$ or $\badFkF$.  We show that $\crn(D)>\crn(K_n)$.}

\begin{theorem}\label{th:oneBadK5}  
Let $D$ be a drawing of $K_n$ such that there is an isomorph $J$ of $K_5$ with $D[J]$ either $\badTkF$ or $\badFkF$.  Suppose, for every isomorph $H$ of $K_7$ in $K_n$ containing $J$, $D[J]$ is the only non-convex $K_5$ in $D[H]$.  
\begin{enumerate}
\item If $J$ is $\badTkF$, then there is a drawing $D'$ of $K_n$ such that $\crn(D')\le \crn(D)-2$.  
\item If $J$ is $\badFkF$,  there is a drawing $D'$ of $K_n$ such that $\crn(D')\le \crn(D)-4$.  If, in addition, $n$ is even, then $\crn(D')\le \crn(D)-5$.
\end{enumerate}
\end{theorem}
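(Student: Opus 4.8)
The plan is to exploit the Structure Theorem and the local hypothesis to understand exactly how $J$ sits inside $D$, and then perform a surgery that removes crossings. The starting point is the hypothesis: every $K_7$ through $J$ has $D[J]$ as its only non-convex $K_5$. Fix the vertices of $J$ and consider, for each pair $\{x,y\}$ of vertices of $K_n-V(J)$, the $K_7$ induced by $V(J)\cup\{x,y\}$; by hypothesis, $D[V(J)\cup\{x,y\}]$ is convex except for $J$. First I would use this to establish that the subdrawing $D[K_n-e]$, where $e$ is a suitably chosen ``bad'' edge of $J$ (one of the edges whose 3-cycles have no convex side, as identified in the proof of Lemma~\ref{lm:badK5's}), is convex --- or more precisely, that $D$ becomes convex after rerouting $e$. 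The point is that all the non-convexity is concentrated on the two 3-cycles of $J$ flagged in Lemma~\ref{lm:badK5's}, and these share the edge $e$; once $e$ is rerouted, Corollary~\ref{co:twoBadSides} and Lemma~\ref{lm:oneBadConfiguration} should certify convexity of the modified drawing.

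Next I would locate $e$ optimally. In $\badTkF$, the edge $e$ participating in the non-convex 3-cycles is crossed at least twice within $D[J]$ itself (in the three-crossing $K_5$ the ``long horizontal edge'' meets two of the curved edges); in $\badFkF$, Observation~\ref{obs:3crossedTriang} tells us some 3-cycle of $J$ is crossed three times by a single edge, so the relevant edge is crossed three times inside $D[J]$. The surgery is: delete $e$ from $D$, obtaining a drawing $D_0$ of $K_n-e$, and then re-insert $e$ along a curve that stays close to a path in $D_0$ between the endpoints of $e$ realizing few crossings. Here I would invoke the local hypothesis again: since every $K_7$ through $J$ has only $J$ non-convex, the Structure Theorem applied to these $K_7$'s pins down how the vertices of $K_n-V(J)$ attach to $J\setminus e$, and in particular forces a face structure in $D[K_n-e]$ that allows $e$ to be redrawn crossing strictly fewer edges than it did in $D$. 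For the $\badTkF$ case, saving the $2$ in-$J$ crossings on $e$ (and, via the hypothesis, not gaining any new crossings outside $J$) gives $\crn(D')\le\crn(D)-2$. For $\badFkF$, the analogous argument with the triple-crossed edge gives a saving of $3$ crossings inside $J$; to push this to $4$ (and to $5$ when $n$ is even), I would additionally count crossings of $e$ with edges to the outside vertices: a parity/counting argument --- of the type that each outside vertex contributes a controlled number of crossings with $e$ versus with the rerouted $e'$ --- should yield one extra saved crossing, and the evenness of $n$ a further one because the outside vertices can then be split into two equal halves on the two sides of the rerouted curve.

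The key auxiliary step, and the one I would carry out most carefully, is the claim that rerouting $e$ does not create new crossings --- equivalently, that in $D[K_n-e]$ there is a curve from one endpoint of $e$ to the other avoiding all edges that $e$ previously avoided and meeting a subset of the edges $e$ previously met. This is where the ``$J$ is the only non-convex $K_5$ in every $K_7$'' hypothesis does real work: it says that for any two outside vertices $x,y$, the $7$-point subdrawing is convex away from $J$, so by the Structure Theorem it is built from a natural $K_r$ plus planarly-joined or inside vertices, and in that rigid picture one can read off that the endpoints of $e$ lie on a common face boundary of $D[(K_n-e)]$ restricted to that subdrawing. Patching these local statements into a single global face of $D[K_n-e]$ containing both endpoints of $e$ is the main obstacle; I expect it to require a careful induction on the number of outside vertices, adding them one at a time and checking that each new vertex's edges to $V(J)$ respect the already-established face structure, using Lemma~\ref{lm:naturalOtherVertices} at each step.

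**The hard part** will be that last global gluing: going from ``for every pair $\{x,y\}$ the $7$-vertex picture is nice'' to ``the whole drawing $D[K_n-e]$ has a single face witnessing where $e$ can go'' is not a formal consequence, since the nice local pictures could in principle be glued inconsistently. I would handle this by showing the bad edge $e$ and its two endpoints behave like a ``free'' configuration in the sense of Theorem~\ref{th:freeFace}: once $e$ is removed, the obstruction to convexity vanishes, and the resulting convex drawing's Structure Theorem gives the needed face. If that direct route stalls, the fallback is to argue more crudely that at least the $2$ (resp.\ $3$) crossings internal to $D[J]$ on $e$ are saved and that, by a direct parity count of how $D[e]$ meets the stars of the outside vertices, no more than that many new crossings can appear --- weaker, but enough for the stated inequalities with the explicit numerical slack ($-2$, $-4$, $-5$) built in.
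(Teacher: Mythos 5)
Your overall strategy --- reroute an edge of $J$ and balance the crossings saved against those gained, using the local convexity hypothesis to control the routings of edges incident with outside vertices --- is the same as the paper's, but several of your concrete steps would fail. First, you repeatedly invoke the Structure Theorem~\ref{th:structure} applied to the $K_7$'s containing $J$; that theorem requires a \emph{convex} drawing, and these $K_7$'s are by hypothesis not convex (they contain $J$). What the paper actually extracts from the hypothesis is the convexity of the individual $K_5$'s of the form $(J-a)+x$ with $a\in V(J)$ and $x\notin V(J)$; combined with Observation~\ref{obs:crossingK4}, this forces the routing of almost every edge from an outside vertex to $J$ and confines the outside vertices to two regions $R_1$ and $R_2$. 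That explicit routing analysis is the real content of the proof and cannot be replaced by an appeal to Theorem~\ref{th:structure}. Second, your claim that rerouting the bad edge makes the drawing convex is both false and unnecessary: in the $\badFkF$ case the paper reroutes $tv$ and the resulting drawing still contains a non-convex $K_5$ (now a $\badTkF$); the savings beyond $(r_2-r_1)+2$ come from verifying that the new drawing again satisfies the hypotheses of the theorem and applying part (1) to it, giving $2+2=4$. Your accounting (save the three internal crossings of a triple-crossed edge, then find one more) does not match how the $-4$ arises, and the triple-crossed edge cannot in general be redrawn with three fewer crossings without creating new ones.

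Third, your fallback --- save the $2$ (resp.\ $3$) crossings internal to $D[J]$ and show that no more than that many new crossings appear --- proves only $\crn(D')\le\crn(D)$, not a strict decrease; the paper's strict gain comes from the term $r_2-r_1\ge 0$ (the imbalance between the two regions), obtained by choosing which of two symmetric reroutings to perform according to which region holds more vertices, and in one of the two cases the saving comes \emph{entirely} from this imbalance rather than from internal crossings of $J$. Relatedly, your parity argument for even $n$ is backwards: since $n=5+r_1+r_2$, evenness of $n$ forces $r_1\ne r_2$ and hence $r_2-r_1\ge1$, one extra saving; it is not that the outside vertices split into equal halves. Finally, the global gluing you flag as the hard part is handled in the paper not by exhibiting a single face of $D[K_n-e]$ into which $e$ can be redrawn (such a face need not exist), but by an edge-by-edge case analysis showing that every other edge has at most as many crossings after the reroute as before, using the determined routings; your proposed route through Theorem~\ref{th:freeFace} would not apply, since that theorem characterizes f-convexity, a far stronger property than anything available here.
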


\oct{We remark that the \rbr{lower bounds} 2, 4, and \redit{5} for $\crn(D)-\crn(D')$ exhibited in Theorem \ref{th:oneBadK5} are precisely the smallest differences found by Aichholzer \redit{(private communication)} between any drawing, for $n\le 12$, of $K_n$ that has either a $\badTkF$ or a $\badFkF$ and an optimal drawing of $K_n$.}

\oct{Before we prove Theorem \ref{th:oneBadK5}, we have the following simple arithmetic fact.  This will be used twice, once in the proof of Theorem \ref{th:oneBadK5} and in showing that a non-convex drawing of $K_9$ has at least 36 crossings.}

\begin{lemma}\label{lm:arith}
\oct{Let $n$ be an integer, $n\ge 4$, and let $D$ be a drawing of $K_n$.
Then $(n-4)\crn(D) = \sum_{v\in V(K_n)}\crn(D-v)$.
\ignore{\item\label{it:bootstrap} Suppose $n\ge 7$.
 Let $D'$ be another drawing of $K_n$ such that (a) for every vertex $v$ of $K_n$, $\crn(D'-v)\le \crn(D-v)$, and (b) for at least $n-5$ vertices $v$ of $K_n$, $\crn(D'-v)\le \crn(D-v)-2$.  Then $\crn(D')\le \crn(D)-2$.
\end{enumerate}}
}
\end{lemma}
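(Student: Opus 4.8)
\textbf{Proof proposal for Lemma \ref{lm:arith}.}
The plan is a straightforward double count of incidences between crossings of $D$ and vertices of $K_n$. First I would fix notation: let $\mathcal X$ be the set of crossings of $D$, so $|\mathcal X| = \crn(D)$. For a crossing $\chi \in \mathcal X$, say $\chi$ \emph{survives} the deletion of $v$ if $\chi$ is still a crossing of $D-v$; then $\crn(D-v)$ is exactly the number of crossings surviving the deletion of $v$, and the quantity I want to evaluate is
\[
\sum_{v \in V(K_n)} \crn(D-v) = \sum_{v \in V(K_n)} \bigl|\{\chi \in \mathcal X : \chi \text{ survives } v\}\bigr| = \sum_{\chi \in \mathcal X} \bigl|\{v \in V(K_n): \chi \text{ survives } v\}\bigr|.
\]

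The key step is to show that each inner count on the right equals $n-4$. A crossing $\chi$ is the intersection point of two edges $e$ and $f$ of $K_n$. By the standing convention that edges incident with a common vertex do not cross, $e$ and $f$ have no endpoint in common, so together they span a set $S_\chi$ of exactly four distinct vertices. The crossing $\chi$ survives the deletion of $v$ precisely when both $e$ and $f$ remain in $D-v$, i.e.\ precisely when $v \notin S_\chi$. Hence $\chi$ survives the deletion of exactly $n - |S_\chi| = n-4$ vertices. Substituting this into the displayed identity gives
\[
\sum_{v \in V(K_n)} \crn(D-v) = \sum_{\chi \in \mathcal X} (n-4) = (n-4)\,\crn(D),
\]
as required. (For $n=4$ both sides are $0$, consistent with the convention that $D-v$ is a drawing of $K_3$, which has no crossings.)

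There is essentially no obstacle here; the only point requiring care is the observation that every crossing involves four distinct vertices, which is exactly where the ``good drawing'' hypothesis (no crossing between adjacent edges) is used. Everything else is bookkeeping, and the identity holds for any drawing of $K_n$ with $n\ge4$.
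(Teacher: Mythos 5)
Your proof is correct and is exactly the paper's argument: the paper disposes of this lemma in one line by noting that every crossing lies in $n-4$ of the drawings $D-v$, which is precisely the double-counting observation you spell out. Your added care about adjacent edges not crossing (so each crossing spans four distinct vertices) is the right justification for that one line.
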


\begin{proof}
\oct{This follows from the fact that every crossing of $D$ is in $n-4$ of the drawings $D-v$.}
\ignore{
\oct{For the second claim, label the vertices of $K_n$ as $v_1,\dots,v_n$ such that, for $i=1,2,\dots,n-5$, $\crn(D'-v_i)\le \crn(D-v_i)-2$, while for $i=n-4,\dots,n$, $\crn(D'-v_i)\le \crn(D-v_i)$.  Then 
\begin{eqnarray*}
(n-4)\crn(D') &=& \sum_{i=1}^n \crn(D'-v)\\
&\le & \left(\sum_{i=1}^{n-5}\crn(D-v_i)-2\right) +\left(\sum_{i=n-4}^n\crn(D-vi)\right)\\
&=& (n-4)\crn(D) - 2(n-5)\,.
\end{eqnarray*}
As long as $n\ge 7$, this implies that $\crn(D')\le \crn(D)-2$. }
}
\end{proof}

\begin{cproofof}{Theorem \ref{th:oneBadK5}}
We use the labelling of $J$ as shown in Figure \ref{fg:labelledBad}.  We first deal with the case $J=\badTkF$.

\bigskip
\noindent\redit{{\sc (I) $J=\badTkF$.}}
\bigskip


\bigskip

\begin{figure}
\begin{center}
\scalebox{0.8}{\input{twok5s.pdf_t}}
\caption{Labelled $\badTkF$ and $\badFkF$ for the proof of Theorem \ref{th:oneBadK5}.}\label{fg:labelledBad}
\end{center}
\end{figure}

\begin{claim}\label{cl:emptyRegions}  There is no vertex of $D[K_n]$ in the side of any of the 3-cycles $D[stw]$, $D[suv]$, and $D[tuv]$ that has no vertex of $D[J]$.
\end{claim}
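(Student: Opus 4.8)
The claim says that in a drawing $D$ of $K_n$ containing $J\cong\badTkF$ (with the labelling of Figure~\ref{fg:labelledBad}), the three ``outer'' $3$-cycles $D[stw]$, $D[suv]$, $D[tuv]$ each have an empty side, namely the side not meeting $D[J]$, and that no vertex of the whole drawing lies in those empty sides. The strategy is to argue by contradiction: suppose some vertex $x$ of $K_n$ lies in the forbidden side of one of these three cycles, say in the side $\Delta$ of $D[stw]$ disjoint from $D[J]$. Then look at the $K_5$ induced by $\{s,t,w,u,x\}$ (or the appropriate choice of three vertices of $J$ together with $x$ and one more vertex of $J$), and more importantly at a $K_7$ $H$ containing $J$ and $x$. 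By hypothesis, $D[J]$ is the only non-convex $K_5$ inside $D[H]$, so every other $K_5$ in $D[H]$ — in particular every $K_5$ containing $x$ and four vertices of $J$ — is convex. The first step is therefore to understand, from Figure~\ref{fg:labelledBad}, exactly which $3$-cycles of $\badTkF$ are non-convex and which sides are non-convex; in $\badTkF$ there is essentially a single ``bad'' triangle configuration (the one produced in the proof of Lemma~\ref{lm:badK5's}: an edge that bends together with the long edge), and the three triangles $stw$, $suv$, $tuv$ are the ones bounding the three ``empty'' faces of $\badTkF$ — these sides are convex in $D[J]$.

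**Key steps.** First, fix the labelling and identify in $\badTkF$ the faces bounded by $D[stw]$, $D[suv]$, $D[tuv]$ on their $J$-free sides; call the corresponding closed discs $\Delta_{stw},\Delta_{suv},\Delta_{tuv}$. Observe that in $D[J]$ these are the convex sides of their respective triangles (this is immediate from inspecting the drawing and from Lemma~\ref{lm:badK5's}, since the only non-convex triangles of $\badTkF$ are the ones witnessed in that lemma's proof). Second, suppose for contradiction $D[x]\in\Delta_{stw}$ for some vertex $x\notin V(J)$ (the other two cases are symmetric under the automorphisms of $\badTkF$, or handled identically). Pick two more vertices of $J$ — I expect $u$ and $v$ work — and form the $K_7$ $H$ on $\{s,t,w,u,v,x\}$ plus, if $n$ forces it, we already have $7$ vertices: $V(H)=V(J)\cup\{x\}$ has exactly $6$ vertices, so we need a seventh vertex $y$; pick any $y\in V(K_n)\setminus(V(J)\cup\{x\})$ (which exists since $n\ge 7$; for $n<7$ the hypothesis on $K_7$'s is vacuous and one argues directly inside $D[J+x]$ using convexity of $D[J+x]$, since $D[J+x]$ being a $K_6$ every $K_5$ in it other than $D[J]$ is convex, in particular $D[stwu x]$ and $D[stwvx]$). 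Third, apply the hypothesis: $D[stwux]$ is convex. Since $D[x]$ lies in the convex side $\Delta_{stw}$ of $D[stw]$ inside $D[J]$, and $D[u]$ lies on the other side, Observation~\ref{obs:crossingK4} (or Corollary~\ref{co:determinedSide}) forces $D[stw+u]$ to be non-crossing — but in $\badTkF$, reading off Figure~\ref{fg:labelledBad}, the $K_4$ on $\{s,t,w,u\}$ is crossing, contradiction. (If instead $\{s,t,w,u\}$ is non-crossing in $\badTkF$, replace $u$ by whichever fourth vertex of $J$ gives a crossing $K_4$ on $\{s,t,w\}$ together with $x$ on the wrong side; such a vertex exists because $stw$ is a triangle of $\badTkF$ bounding an empty face, hence its non-empty side contains a vertex spanning a crossing $K_4$ — otherwise that side of $D[stw]$ would satisfy the \fpp\ and $D[stw]$ would have two convex sides, making $x\in\Delta_{stw}$ innocuous and in fact no contradiction is needed there.) The same argument run with $x$ in $\Delta_{suv}$ or $\Delta_{tuv}$ gives the other two cases.

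**Main obstacle.** The real work is purely a matter of reading Figure~\ref{fg:labelledBad} correctly and checking, for each of the three triangles $stw$, $suv$, $tuv$, which vertex of $J$ supplies a crossing $K_4$ on the non-empty side; I expect this to come down to: in $\badTkF$ one of $u,v,w$ is the ``apex'' whose removal leaves a crossing $K_4$, and the three triangles in the claim are precisely those containing two of $\{u,v,w\}$ and one of the base, so for each there is a canonical fourth vertex of $J$ witnessing non-convexity of the non-empty side. The subtlety is making sure the chosen fourth vertex $b$ of $J$ genuinely lies on the opposite side of $D[T]$ from $\Delta$ — this is where Observation~\ref{obs:3crossedTriang} and the goodness of the drawing (no two closed edges meeting twice) are needed to rule out degenerate routings. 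Once the configuration is pinned down, the contradiction is one line via Observation~\ref{obs:crossingK4}: a convex $K_5$ cannot have a vertex on the non-convex (crossing-$K_4$) side of one of its triangles, yet the hypothesis on $K_7$'s forces every such $K_5$ to be convex.
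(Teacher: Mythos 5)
Your overall setup --- assume a vertex $x$ lies in the $J$-empty side of $D[stw]$ and use the hypothesis to make every $K_5$ on $x$ and four vertices of $J$ convex --- matches the paper's. But the step where you extract the contradiction does not work. You argue that, since $D[stwux]$ is convex, $D[x]$ lies in the convex side of $D[stw]$, and $D[u]$ lies on the other side, Observation~\ref{obs:crossingK4} ``forces $D[stw+u]$ to be non-crossing,'' contradicting the fact that $\{s,t,w,u\}$ is a crossing $K_4$ in $\badTkF$. That is not what Observation~\ref{obs:crossingK4} gives. It says only that the side of $D[stw]$ containing the fourth vertex of a crossing $K_4$ is not convex; applied here it tells you the $u$-side of $D[stw]$ is not convex, so the convex side of $D[stw]$ in the convex $K_5$ on $\{s,t,w,u,x\}$ must be the $x$-side, and hence $D[stw+x]$ is a \emph{non-crossing} $K_4$. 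There is no contradiction in that: a convex $K_5$ is perfectly happy to have $u$ on the non-convex side of one of its triangles. To contradict convexity of a single $K_5$ via Corollary~\ref{co:twoBadSides} you would need a crossing $K_4$ on \emph{each} side of $D[stw]$, i.e.\ you would need $D[stw+x]$ to be crossing --- and nothing in your argument supplies that (a priori $x$ could be planarly joined to $stw$ inside the empty region). Your parenthetical fallback addresses a different failure mode (the case $\{s,t,w,u\}$ non-crossing), not this one.

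The paper closes the gap by a genuinely different device: it tracks the routing of the single edge $D[xu]$ using \emph{two} convex $K_5$'s. Convexity of $(J-w)+x$ together with Observation~\ref{obs:crossingK4} forces $D[xu]$ not to cross the 4-cycle $D[stuv]$; convexity of $(J-v)+x$ likewise forces $D[xu]$ not to cross the 4-cycle $D[wtus]$; but $D[x]$ and $D[u]$ lie in different faces of $D[stuv]\cup D[wtus]$, so $D[xu]$ must cross one of the two. That two-$K_5$, edge-routing argument is the missing idea; the one-triangle, one-$K_5$ argument you propose cannot be repaired into a proof.
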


\begin{proof}We start with $D[stw]$.  Similar arguments apply to $D[suv]$.  Finally, symmetry shows that $D[tuv]$ also does not have a vertex on the side empty in $D[J]$.

Suppose to the contrary that there is a vertex $x$ of $K_n$ such that $D[x]$ is in the side of $D[stw]$ that is empty in $D[J]$.   By hypothesis, the $K_5$ consisting of $J-w$ plus $x$ is convex \rbr{in $D$}.  Since $D[x]$ is incident with a face of $D[J-w]$ that is incident with the crossing of $D[J-w]$, Observation \ref{obs:crossingK4} and convexity imply  $D[xu]$ does not cross the 4-cycle $D[stuv]$.  

Likewise, the $K_5$ consisting of $J-v$ together with $x$ is convex \rbr{in $D$}.  Again, $D[x]$ is in a face of $D[J-v]$ incident with a crossing, so $D[xu]$ does not cross the 4-cycle $D[wtus]$.  However, $D[x]$ and $D[u]$ are in different faces of  $D[stuv]\cup D[wtus]$, so $D[xu]$ must cross at least one of the two 4-cycles.

The same deletions show that any vertex in the empty side of $D[suv]$ cannot connect to $t$.  \end{proof}

There are two remaining regions of interest.  Let $\times$ be the crossing of $su$ with $tv$. Let  $R_1$ be the region bounded by $D[wt${}$\times${}$sw]$ that does not contain $D[u]$ and $D[v]$;  $R_2$ is the region bounded by $D[stuv]$ that does not contain $D[w]$.

\begin{claim}\label{cl:routings} If $D[x]\in R_1$ and $D[y]\in R_2$, then: 
\begin{enumerate}  
\item\label{it:xu-v} $D[xu]$ crosses $D[J]$ only on $D[tv]$ and $D[xv]$ crosses $D[J]$ only on $D[su]$;
\item\label{it:xs-t} $D[xs]$ and $D[xt]$ do not cross $D[J]$;
\item\label{it:xw} $D[xw]$ either does not cross $D[J]$, or crosses $D[st]$ and at least one of $D[su]$ and $D[tv]$;
\item\label{it:ys-t-u-v} $D[ys]$, $D[yt]$, $D[yu]$, and $D[yv]$ cross $D[J]$ at most in either $D[uw]$ or $D[vw]$ \julythirtyone{(or both)};
\item\label{it:yw} $D[yw]$  crosses only $D[st]$.
\end{enumerate}

\julythirtyone{Moreover, i}f $zz'$ is an edge of $G$ with neither $z$ nor $z'$ in $J$ and $T$ is one of the 3-cycles $stw$, $suv$, and $tuv$, then either $D[zz']$ does not cross $D[T]$ or it crosses the one of $D[st]$, $D[su]$, and $D[tv]$ that is in $D[T]$.
\end{claim}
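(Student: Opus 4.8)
The plan is to exploit, over and over, the single hypothesis that every $K_7$ containing $J$ has $D[J]$ as its unique non-convex $K_5$, so that every $K_5$ of the form $(J-p)+x$ or $(J-\{p,q\})+\{x,y\}$ is convex, together with Observation \ref{obs:crossingK4} (crossing $K_4$'s force the third vertex onto the non-convex side) and Claim \ref{cl:emptyRegions} (the ``empty'' sides of $D[stw]$, $D[suv]$, $D[tuv]$ stay empty). The geometry of $\badTkF$ is rigid: the three 3-cycles $stw$, $suv$, $tuv$ each have an empty side, and the remaining faces are exactly $R_1$, $R_2$, plus the face(s) of $D[J]$ containing $D[u]$ and $D[v]$. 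So ``which edges of $D[J]$ can an edge from a vertex in $R_1$ (resp.\ $R_2$) cross'' is pinned down face by face.

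First I would establish (\ref{it:xs-t}) and (\ref{it:ys-t-u-v}) as the base cases. For $D[x]\in R_1$: $x$ lies on the side of $D[stw]$ that is \emph{not} the empty side, and on the side of $D[suv]$, $D[tuv]$ that \emph{is} the $J$-empty side; by Claim \ref{cl:emptyRegions} no vertex is strictly inside those, and a curve from $D[x]$ to $D[s]$ or $D[t]$ can be routed inside $R_1$. To see it \emph{must} avoid $D[J]$: $R_1$ is bounded by arcs of $D[wt]$, $D[su]$, $D[sw]$ together with the crossing $\times$; an edge $D[xs]$ crossing out of $R_1$ would have to cross one of these, and in each case I produce a $K_4$ among $\{x,s,t,w\}$ or $\{x,s,u,w\}$ (or similar) that becomes a crossing $K_4$ with a vertex on the wrong side, contradicting convexity of the corresponding five-vertex subdrawing. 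The argument for $D[xt]$ is symmetric. For $D[y]\in R_2$, which lies on the non-empty side of $D[suv]$ and $D[tuv]$ and on the empty side of $D[stw]$: an edge from $y$ to any of $s,t,u,v$ can leave $R_2$ only by crossing the 4-cycle $D[stuv]$, and the only edges of that 4-cycle that border a face reachable without re-entering an empty region are $D[uw]$-side and $D[vw]$-side; crossing $D[st]$, $D[su]$, or $D[tv]$ would push $y$ into the non-convex side of a crossing $K_4$ inside $J$ (use Observation \ref{obs:crossingK4} on $\{s,t,u,v\}$ and its 3-cycles). This gives (\ref{it:ys-t-u-v}).

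Next, (\ref{it:xu-v}), (\ref{it:yw}), (\ref{it:xw}): for $D[xu]$, note $D[x]$ and $D[u]$ are separated by the simple closed curve $D[wt]\cup D[ts]\cup(\text{part of }D[su])$ or by $D[tv]$ itself; parity of crossings with $D[J]$ forces $D[xu]$ to cross $D[J]$, and by (\ref{it:xs-t}) it cannot cross $D[st]$, $D[sw]$, $D[tw]$, so among $D[su]$, $D[tv]$, $D[uv]$ the goodness condition (an edge crosses a 3-cycle at most once, two closed edges meet at most once) plus the location of $R_1$ forces it to cross exactly $D[tv]$; since $xu$ and $tu$ share $u$ it cannot cross $D[uv]$ or $D[su]$ at a point forced by being incident, which kills the remaining options. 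Symmetrically $D[xv]$ crosses only $D[su]$. For $D[yw]$: $y$ is in the empty side of $D[stw]$, so $D[yw]$ must exit; by (\ref{it:ys-t-u-v})-style reasoning on $\{y,s,t,w\}$ it can only cross $D[st]$, and by Claim \ref{cl:emptyRegions} it cannot enter the empty sides, forcing exactly one crossing on $D[st]$. For $D[xw]$, run the case analysis: either it stays in $R_1$ (no crossing), or it exits through $D[st]$, and then to reach $D[w]$ it must re-cross back into the region containing $D[w]$, which it can only do through $D[su]$ or $D[tv]$; convexity of $\{x,s,u,w\}$ and $\{x,t,v,w\}$ rules out anything else.

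Finally, the ``moreover'' clause: given an edge $D[zz']$ with $z,z'\notin J$ and $T\in\{stw,suv,tuv\}$, if $D[zz']$ meets $D[T]$ at all then, because each of $z,z'$ plus $J$ lies in a convex $K_6$ and each of $\{z,z'\}\cup(T)$ spans a $K_5$ that is convex, $D[T]$ has a convex side; if $D[zz']$ crossed one of the two edges of $D[T]$ bounding the empty side, Claim \ref{cl:emptyRegions} would be violated (an arc of $D[zz']$, possibly together with a short segment, would enter the empty side, and following it gives a vertex there) — more carefully, one of $z,z'$ would be separated from $D[J]$ by the empty side, contradicting Claim \ref{cl:emptyRegions}; hence $D[zz']$ meets $D[T]$ only in the one edge of $D[T]$ lying among $\{st,su,tv\}$.

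\textbf{Main obstacle.} The genuine work is the ``must cross exactly this edge'' direction in parts (\ref{it:xu-v}) and (\ref{it:xw}): ruling out a crossing with $D[uv]$ (or a second crossing with $D[su]$, $D[tv]$) requires carefully setting up the right five- or six-vertex subgraph and invoking convexity via Observation \ref{obs:crossingK4} — a crossing $K_4$ inside $J$ together with the outside vertex, showing the outside vertex would have to lie on the non-convex side. Keeping straight \emph{which} faces of $D[J]$ are adjacent across \emph{which} edge, so that the ``only available routing'' statements are actually forced rather than merely consistent, is the bookkeeping that has to be done with the labelled figure in hand; everything else is a mechanical application of Claim \ref{cl:emptyRegions}, Observation \ref{obs:crossingK4}, and goodness of the drawing.
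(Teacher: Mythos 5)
You have assembled the right toolkit---convexity of the five-vertex subdrawings $(J-p)+x$ guaranteed by the hypothesis, Observation \ref{obs:crossingK4}, Claim \ref{cl:emptyRegions}, and parity/goodness---and this is indeed the engine of the paper's proof. But the face-by-face bookkeeping that you yourself identify as the real work is wrong at several load-bearing points, so the plan as written would not execute. Concretely: for $D[x]\in R_1$, $x$ is \emph{not} on the $J$-empty side of $D[suv]$ or of $D[tuv]$ (if it were, Claim \ref{cl:emptyRegions} would already forbid $x$ from being there at all); $R_1$ lies on the side of each of those 3-cycles containing the other two vertices of $J$. Likewise $R_2$ lies on the non-empty side of $D[stw]$ (the empty side is the small region between $D[w]$ and $D[st]$ containing the crossings of $D[st]$ with $D[wu]$ and $D[wv]$), so your premise for item (\ref{it:yw}), ``$y$ is in the empty side of $D[stw]$, so $D[yw]$ must exit,'' is false. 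The boundary of $R_1$ also includes an arc of $D[tv]$, which you omit, and an edge leaving $R_1$ cannot ``exit through $D[st]$'' as your treatment of item (\ref{it:xw}) assumes, since $D[st]$ does not lie on the boundary of $R_1$; the first crossing must be with $D[su]$ or $D[tv]$.

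The second gap is logical rather than geometric: in item (\ref{it:xu-v}) you rule out $D[xu]$ crossing $D[st]$, $D[sw]$, $D[tw]$ ``by (\ref{it:xs-t})'', but item (\ref{it:xs-t}) is a statement about the edges $xs$ and $xt$ and says nothing about which edges of $J$ the edge $xu$ may cross. The step you are missing is the paper's workhorse: since $(J-w)+x$ is convex and $D[x]$ lies in a face of the crossing $K_4$ $D[J-w]$ incident with its crossing, Observation \ref{obs:crossingK4} forces \emph{every} edge from $x$ to $\{s,t,u,v\}$ to avoid the 4-cycle $D[stuv]$; this one observation disposes of $D[st]$ (and $D[tu]$, $D[uv]$, $D[vs]$) for all of $xs,xt,xu,xv$ at once, after which shared endpoints and one short convexity argument finish items (\ref{it:xu-v}) and (\ref{it:xs-t}). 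For item (\ref{it:xw}) the paper uses a device you will need some substitute for: $D[xw]$ cannot cross the planar drawing of the $K_{2,4}$ with sides $\{x,w\}$ and $\{s,t,u,v\}$, hence lies in one of its four faces, and the result is read off face by face. Finally, in the ``moreover'' clause, Claim \ref{cl:emptyRegions} excludes only \emph{vertices} from the empty sides, not arcs of edges, so ``following the arc gives a vertex there'' is not a valid inference; the correct argument is that $D[z_1w]$ crosses $D[stw]$ only on $D[st]$, so an edge $D[z_1z_2]$ entering and leaving the empty side through $D[ws]$ and $D[wt]$ would have to cross $D[z_1w]$, which is impossible.
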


\begin{proof} We take each possibility for $x$ in turn.  \julythirtyone{Note that the $K_5$ consisting of $x$ and $J-w$ is convex \rbr{in $D$} by hypothesis and that $D[x]$ is in a face of $D[J-w]$ incident with the crossing.  Observation \ref{obs:crossingK4} shows that no edge from $D[x]$ to $D[J-w]$ crosses the 4-cycle $D[stuv]$.}
\begin{enumerate}[label={\em {\em(}x\hskip 1pt\alph*{\em)}}, start=21]

\item \julythirtyone{By the note just above,} $D[xu]$ does not cross the 4-cycle $D[stuv]$.  In particular, if $D[xu]$ crosses any edge incident with $w$, then it crosses all of them.  Because \julythirtyone{both $xu$ and $wu$} are incident with $u$, $D[xu]$ and $D[wu]$ do not cross.  Thus, $D[xu]$ crosses $D[J]$ only on $D[tv]$.

\item This case is symmetric to the preceding one: $D[xv]$ crosses $D[J]$ only on $D[su]$.

\end{enumerate}

\begin{enumerate}[label={\em {\em(}x\hskip 1pt\alph*{\em)}}, start=19]
\item \julythirtyone{Again by the note above,} \rbrnew{$D[xs]$} does not cross the 4-cycle $D[stuv]$ in $D[J-w]$.  If $D[xs]$ crosses any edge incident with $w$, then it crosses all of $D[wt]$, $D[wu]$, and $D[wv]$.  But now the 3-cycle \julythirtyone{$svx$} has no convex side \rbr{in the drawing of} the $K_5$ consisting of $J-t$ together with $x$\julythirtyone{, a contradiction to the convexity of this $K_5$ \rbr{in $D$}}.

\item This case is symmetric to the preceding one.

\end{enumerate}
\begin{enumerate}[label={\em {\em(}x\hskip 1pt\alph*{\em)}}, start=23]

\item \rbr{The following argument is due to Matthew Sullivan, simplifying our original.  Consider the isomorph $L$ of $K_{2,4}$ with $x$ and $w$ on one side and $s,t,u,v$ on the other side.  Then $D[xw]$ does not cross (the planar drawing) $D[L]$ and so is contained in one of the four faces of  $D[L]$.   The face of $D[L]$ bounded by $swtx$ is disjoint from $D[J]$.  In each of the other three faces, $D[xw]$ must cross $D[st]$.  In two of these three faces, it also crosses exactly one of $D[su]$ and $D[tv]$.  In the third, it crosses both $D[su]$ and $D[tv]$.}
\end{enumerate}

Now we take each case for $y$ in turn.
\begin{enumerate}[label={\em {\em(}y\hskip 1pt\alph*{\em)}}, start=19]
\item The  convexity \rbr{in $D$} of each of the $K_5$'s obtained from $J-u$ and $J-v$ by adding $y$ combines with Observation \ref{obs:crossingK4} \redit{to} show that $D[ys]$ crosses \rbrnew{at most $D[uw]$ and} $D[vw]$.
\item This case is symmetric to the preceding one.
\item The convexity \rbr{in $D$} of the $K_5$ obtained from $J-v$ by adding $y$ shows that $D[yu]$ does not cross the 3-cycle $D[stu]$.  Subject to this, there are two possible routings of $D[yu]$ relative to $D[v]$.  

If $D[y]$ is in the subregion incident with $D[t]$, $D[u]$, and part of $D[uw]$, then convexity shows a unique drawing of $D[yu]$.  For the remaining portion of $R_2$, if $D[tuy]$ separates $D[v]$ from $D[w]$, then the edges $D[uw]$ and $D[tv]$ show that $D[tuy]$ has no convex side in the $K_5$ on these five vertices.  This is a contradiction to the hypothesis and therefore $D[yu]$ is also contained inside $D[stuv]$.
\item This case is symmetric to the preceding one.
\item This case uses the same deletions as for $ys$ to show that $D[yw]$ crosses $D[J]$ only on $D[st]$.
\end{enumerate}

\rbr{Finally, we consider the remaining three types of edges $z_1z_2$:  $D[z_1]$ and $D[z_2]$ can both be in $R_1$; both in $R_2$; or one in each.  In all three cases for $z_1,z_2$ and all three cases for the three-cycle $T$, $D[z_1]$ and $D[z_2]$ are on the same side of $D[T]$.   In the event that $D[z_1]$ and $D[z_2]$ are both planarly joined to $D[T]$, Corollary~\ref{co:twoPlanarToTriangle} applies to show $D[z_1z_2]$ does not cross the 3-cycle.  }

\rbr{In the remaining cases, we assume that $D[z_1]$ is not planarly joined to  $D[T]$.  If $T=stw$, then the only possible crossing with $D[T]$ is $D[z_1w]$ crossing $D[st]$.  As $D[z_1z_2]$ has either 0 or 2 crossings with $D[stw]$, but does not cross $D[z_1w]$, the two crossings of $D[z_1z_2]$ and $D[T]$ cannot be on $D[ws]$ and $D[wt]$.  For $T=suv$ and $T=tuv$, the edges $z_1u$ and $z_1v$, respectively, produce analogous results.}
 \end{proof}

We are now prepared for the final part of the proof.  For $i=1,2$, let $r_i$ be the number of vertices of $D[K_n]$ that are in (the interior of) $R_i$.  We distinguish two cases.

\medskip\noindent{\bf Case 1:} {\em $r_1\le r_2$.}

\medskip In this case, let $D'$ be the drawing of $K_n$ obtained from $D$ by rerouting $st$ alongside the path $D[swt]$, so as to not cross $D[wu]$ and $D[wv]$.   There are at least $2+r_2$ crossing pairs of edges in $D$ that do not cross in $D'$:  two from $D[st]$ crossing $D[wv]$ and $D[wu]$, plus all the crossings of $D[st]$ from those edges incident with $D[w]$ that cross $D[st]$.  
For these latter crossings, there are at least $r_2$, as, for every vertex $z$ such that $D[z]$ is in $R_2$ has $D[zw]$ crossing $D[st]$.  

On the other hand, there is a set of at most $r_1$ crossing pairs in $D'$ that do not cross in $D$.  These arise from the the edges joining a vertex drawn in $R_1$ to $D[w]$; these might not intersect $D[J]$.  Those that do intersect $D[J]$ cross $D[st]$ and, therefore, yield further savings.

We show that every other edge $z_1z_2$ has no more crossings in $D'$ than it has in $D$.  

\medskip\noindent{\bf Subcase 1.1:}  {\em  $z_1$, say, is in $J$.}

\medskip In this case, we use Claim \ref{cl:routings}.  Items \redit{\ref{it:xu-v}, \ref{it:xs-t}}, \ref{it:ys-t-u-v}, and \ref{it:yw} show that no such edge has more crossings in $D'$ than in $D$, except possibly $xw$.  

If $D[xw]$ does not cross $D[J]$, then $D'[xw]$ also does not cross \rbrnew{$D'[J-st]$}, as required.  If $D[xw]$ crosses $D[J]$, then Claim \ref{cl:routings} (\ref{it:xw}) implies that $D[xw]$ crosses $D[st]$.  Thus, $D[xw]$ \blueit{crosses both $D[su]$ and one of $D[sv]$ \rbr{and $D[tu]$}};  in this case, the same is true of $D'[xw]$ in $D'$, as required.

\medskip\noindent{\bf Subcase 1.2:}  {\em neither $z_1$ nor $z_2$ is in $J$.}  

\medskip If $D[z_1z_2]$ crosses the 3-cycle \rbrnew{$D[stw]$}, then \julythirtyone{the moreover part of} Claim \ref{cl:routings} shows it crosses $D[st]$.  Therefore, it crosses exactly one of $D[sw]$ and $D[wt]$, showing that $D'[z_1z_2]$ crosses $D'[st]$ and the same one of $D'[sw]$ and $D'[wt]$.  That is, $z_1z_2$ crosses the same two edges in both drawings, and we are done.

The net result is that $\crn(D')\le \crn(D)-(2+(r_2-r_1))\le \crn(D)-2$.

\medskip\noindent{\bf Case 2:}  {\em $r_1\ge r_2+1$.}

\medskip This case is virtually identical to Case 1, except we aim to shift the edge $su$ alongside the path \julythirtyone{$D[svu]$ so as to cross $D[vw]$.  The crossing of $D[su]$ with $D[tv]$ is replaced by a crossing of $D'[su]$ with $D'[vw]$.}

In addition,  $r_2$ \julythirtyone{edges incident with $u$} do not cross $D[su]$, but  cross $D'[su]$, while $r_1$ \julythirtyone{edges incident with $u$} cross $D[su]$, but do not cross $D'[su]$.  Since $r_1>r_2$, this adds at least one saving.  \oct{If $n$ is odd, then $r_1\equiv r_2$ (mod 2), so $r_1-r_2\ge 2$, yielding at least two savings in crossings.}

The only additional remark special to this case is the observation that, for $z$ in $R_1$, Claim \ref{cl:routings} implies that if $D[zw]$ crosses the 3-cycle $D[suv]$, then $zw$ crosses $su$.  This shows that $D'[zw]$ also crosses $D'[suv]$ twice\oct{, so there are no other ``new" crossings.}

\oct{If $n$ is even, then this result for $n-1$ (which is odd) shows that, for each vertex $r$ of $K_n$ that is not in $V(J)$, $\crn(D'-r)\le \crn(D-r)-2$.  If $r\in \{s,u,v\}$, then  $D'-r$ and $D-r$ are isomorphic, so $\crn(D'-r)=\crn(D-r)$.  }

\oct{For $r\in \{w,t\}$, the crossings of edges incident with $r$ are the same in $D'$ and $D$, except that $wv$ crosses $su$ in $D'$ but not in $D$, while $tv$ crosses $su$ in $D$, but not in $D'$.  Since $\crn(D')\le \crn(D)-1$, we conclude that $\crn(D'-w) \le \crn(D-w)$.  Similarly, $\crn(D'-t)\le \crn(D-t)-2$.  
}

\oct{It follows from Lemma \ref{lm:arith} that
\begin{eqnarray*}
(n-4)\crn(D') &=& \sum_{v\in K_n}\crn(D'-v)\\
&=& \left(\sum_{v\notin \{s,u,v,w\}}\crn(D'-v)\right) +\left(\sum_{v\in \{s,u,v,w\}}\crn(D'-v)\right)\\
&\le& \left(\sum_{v\notin \{s,u,v,w\}}(\crn(D-v)-2)\right) +\left(\sum_{v\in \{s,u,v,w\}}\crn(D-v)\right)\\
&=& \left (\sum_{v\in K_n}\crn(D-v)\right)-2(n-4)\\
 &=& (n-4)\crn(D) - 2(n-4)\,.
 \end{eqnarray*}
Since $n\ge 5$, this implies that $\crn(D')\le \crn(D) - 2$, as required.}

\bigskip
\noindent\redit{{\sc (II) $J=\badFkF$.}}
\bigskip 

\redit{In this case} there is a homeomorphism $\Theta$ of the sphere to itself that is an involution that restricts to $J$ as, using the labelling in Figure \ref{fg:labelledBad}: $s\leftrightarrow w$; $t\leftrightarrow v$; and $u$ is fixed.  This will be helpful at several points in the following discussion.  The outline of the argument is the same as for $\badTkF$, but there are some interesting differences.  

Let $R_1$ be the face of $D[J]$ incident \rbr{with all three points in} $D[\{s,t,u\}]$ (the infinite face in the diagram) and let $R_2$ be the face of $D[J]$ incident \rbr{with all three points in} $D[\{u,v,w\}]$ (note that $R_2=\Theta(R_1)$).

\begin{claim}
\rbr{If $z$ is a vertex of $K_n$ not in $J$, then  $D[z]\in R_1\cup R_2$.}
\end{claim}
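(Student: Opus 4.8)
The plan is to argue exactly as in the proof of Claim~\ref{cl:emptyRegions}: we show that no vertex $z$ of $K_n$ with $z\notin V(J)$ can be drawn in any face of $D[J]$ other than $R_1$ or $R_2$, because such a vertex would create a non-convex $K_5$ on four vertices of $J$ together with $z$, contradicting the hypothesis that $D[J]$ is the only non-convex $K_5$ in any $K_7$ of $D$ containing $J$ (when $n\le 6$ there is nothing to prove).

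First I would record the local structure of $\badFkF$. Each of the five $K_4$-subgraphs $J-x$ ($x\in V(J)$) is a crossing $K_4$ with a single crossing $\times_x$, so $D[J-x]$ has one face $\Gamma_x$ bounded by a $4$-cycle $C_x$ together with four triangular faces incident with $\times_x$. By Euler's formula $D[J]$ has twelve faces, and, reading them off from Figure~\ref{fg:labelledBad}, one checks the finite fact that $R_1$ and $R_2$ are precisely the two faces of $D[J]$ contained in $\Gamma_x$ for every $x\in V(J)$; equivalently, every face $F$ of $D[J]$ with $F\notin\{R_1,R_2\}$ is contained, for some $x=x(F)\in V(J)$, in one of the four triangular faces of $D[J-x]$. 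Since the involution $\Theta$ fixes $u$, interchanges $s\leftrightarrow w$ and $t\leftrightarrow v$, and satisfies $\Theta(R_1)=R_2$, it permutes the faces of $D[J]$, so it suffices to run both this verification and the argument below on one representative of each $\langle\Theta\rangle$-orbit.

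The crux is then to show: if $z\notin V(J)$ and $D[z]$ lies in a triangular face of $D[J-x]$, then $D[(V(J)\setminus\{x\})\cup\{z\}]$ is not convex. Writing $J-x=\{p,q,r,\ell\}$ with $\times_x$ the crossing of $D[pq]$ and $D[r\ell]$ and $C_x=p\,r\,q\,\ell$, and supposing $D[z]$ is in the triangular face $F_a$ of $D[J-x]$ incident with $\times_x$ and a vertex $a$, I would analyse, in the spirit of the routing arguments in the proof of Claim~\ref{cl:routings}, how the three edges joining $z$ to $V(J-x)\setminus\{a\}$ leave $F_a$: goodness prevents such an edge from meeting $C_x$ twice or from crossing the two edges of $C_x$ at its own endpoint, so each is either disjoint from both diagonals of $C_x$ or crosses exactly one of $D[pq]$, $D[r\ell]$. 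Following the few resulting possibilities and invoking Observation~\ref{obs:crossingK4} together with Corollary~\ref{co:twoBadSides}, exactly as in Claim~\ref{cl:emptyRegions}, one exhibits a $3$-cycle $T$ on three of the vertices $\{p,q,r,\ell,z\}$ both of whose sides fail to be convex; as this $K_5$ uses four vertices of $J$, it is a non-convex $K_5$ distinct from $D[J]$ inside a $K_7$ containing $J$, the desired contradiction. For those faces where a single deletion does not place $z$ and the fourth vertex of the crossing $K_4$ on opposite sides of the available $3$-cycle, one plays off two different choices of the deleted vertex, just as the edge $D[xu]$ is played off against deletion of $w$ and of $v$ in the proof of Claim~\ref{cl:emptyRegions}.

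I expect the main obstacle to be this last step: executing the finite but somewhat lengthy case analysis over the (at most five, up to $\Theta$) faces of $D[\badFkF]$ that are neither $R_1$ nor $R_2$, and in each case constraining the routings of $z$'s edges tightly enough --- using only goodness and convexity of the $K_5$'s containing four vertices of $J$ --- to force a second non-convex $K_5$. The genuinely delicate point is to choose $x(F)$ (and, where needed, a second deletion) so that $z$ lands on the side of the chosen $3$-cycle opposite to the fourth vertex, since only then does Corollary~\ref{co:twoBadSides} apply; the bookkeeping is entirely analogous to that of Claim~\ref{cl:emptyRegions}, only longer because $\badFkF$ has more faces than $\badTkF$.
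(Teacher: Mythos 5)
Your overall plan---a finite, face-by-face analysis of where $D[z]$ could sit, driven by the convexity of the $K_5$'s $(J-x)+z$ and Observation~\ref{obs:crossingK4}---is the right kind of argument, but the backbone you propose fails at two points. First, the ``finite fact'' is false: $R_1$ and $R_2$ are \emph{not} the faces of $D[J]$ contained in $\Gamma_x$ for every $x\in V(J)$. For a vertex $z$ with $D[z]\in R_1$, the drawing $D[(J-s)+z]$ is the convex three-crossing $K_5$ with $z$ as its ``interior'' vertex: its crossings are $tv\times uw$, $zv\times uw$, and $zw\times tv$ (compare Claim~\ref{cl:R1routings}), so $D[z]$ lies \emph{inside} the $4$-cycle bounding $\Gamma_s$, in the triangular face of $D[J-s]$ incident with $t$, $u$, and the crossing. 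Second, and consequently, your ``crux'' is false as stated: lying in a triangular face of a crossing $K_4$ does not force the resulting $K_5$ to be non-convex, since the rectilinear three-crossing $K_5$ is exactly a crossing $K_4$ with the fifth point in a face incident with the crossing. So the property ``$D[z]$ lies in a triangular face of some $D[J-x]$'' neither separates the bad faces from $R_1\cup R_2$ nor produces the second non-convex $K_5$ you are aiming for; indeed no single deletion ever can, because the position of $z$ alone does not determine the routings of its edges.

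The mechanism that does work---and is what the paper uses---is the one you relegate to a fallback. For $D[z]$ outside $R_1\cup R_2$ one fixes a target vertex of $J$ ($u$ when $D[z]$ is in the region bounded by the $4$-cycle $wtsv$, and $w$ when $D[z]$ is in the face incident with $ut$ distinct from $R_1$) together with \emph{two} deleted vertices; Observation~\ref{obs:crossingK4} applied to the two convex $K_5$'s shows that the edge from $z$ to the target crosses neither of the two face-bounding $4$-cycles, while $D[z]$ lies in no face of the union of those two $4$-cycles incident with the target. The contradiction is that an edge cannot be drawn at all, not that a second non-convex $K_5$ appears. As written, your argument would stall at the very first face it examines, because the dichotomy it rests on does not hold.
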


\begin{proof} \rbrnew{Suppose $x$ is a vertex of $K_n-V(J)$ such that $D[x]$ is not in $R_1\cup R_2$. Suppose first that $D[x]$ is in the region bounded by the 4-cycle $D[wtsv]$.} 

\rbrnew{The convexity of $D[(J-s)+x]$ and of $D[(J-w)+x]$ 
 imply that $D[xu]$ does not cross the $4$-cycles $D[twvu]$ and $D[stuv]$, respectively. However, $D[x]$  is not in a face of $D[twvu]\cup D[stuv]$ incident with $D[u]$, a contradiction.} 

\rbrnew{The remaining possibility is that $D[x]$ is in the face $F$ that is both distinct from $R_1$ and incident with $D[ut]$. The convexity of $D[(J-t)+x]$ and $D[(J-v)+x]$ show that $D[xw]$ does not cross  the $4$-cycles $D[swvu]$ and $D[swut]$, respectively. However, $D[x]$ is not in a face of $D[swvu]\cup D[swut]$ incident with $D[w]$, a contradiction.}
%
%
%
%
\end{proof}

We next move to the routings of the edges from a vertex $D[x]$ in $R_1\cup R_2$ to $D[J]$.  

\begin{claim}\label{cl:R1routings}
If $D[x]\in R_1$, then:
\begin{enumerate}
\item $D[xu]$ and $D[xs]$ do not cross $D[J]$;
\item $D[xv]$ crosses $D[J]$ only on $D[uw]$, and \rbr{\rbrnew{$D[xw]$} crosses $D[J]$ only on $D[sv]$ and $D[tv]$}; and 
\item $D[xt]$ either does not cross $D[J]$ or \rbr{it crosses $D[J]$ precisely on} $D[sv]$, $D[sw]$, and $D[su]$.
\end{enumerate}
Furthermore, if $D[x],D[x']\in R_1$, then $D[xx']\subseteq R_1$.
\end{claim}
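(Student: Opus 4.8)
The plan is to run the same machine used in part~(I) and in the proof of Claim~\ref{cl:routings}: use the hypothesis to promote five-vertex subgraphs of $J\cup\{x\}$ to convex drawings of $K_5$, let Observation~\ref{obs:crossingK4} and convexity restrict which edges from $D[x]$ to $V(J)$ may cross $D[J]$, and dispose of the last sentence with Corollary~\ref{co:twoPlanarToTriangle}.

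Fix $x$ with $D[x]\in R_1$. For each $c\in\{s,t,u,v,w\}$ for which $D[J-c]$ is a crossing $K_4$, observe that $\{x\}\cup V(J)$, together with any further vertex, induces a $K_7$ containing $J$ (this is where $n\ge 7$ is used; the finitely many smaller instances are covered by the computations cited at the start of the section), so by hypothesis the $K_5$ $D[(J-c)+x]\ne D[J]$ is convex. Reading off the labelled picture of $\badFkF$, $R_1$ lies on the crossing side of the bounding $4$-cycle of $D[J-c]$, so $D[x]$ sits in a determined face of $D[J-c]$ incident with its crossing. By Observation~\ref{obs:crossingK4}, for each $3$-cycle $T$ of $J-c$ the side of $D[T]$ through the omitted fourth vertex is not convex, hence the other side is; whenever $D[x]$ lies on that convex side, convexity of $D[(J-c)+x]$ forces the three edges from $D[x]$ to $V(T)$ to stay on it, i.e.\ not to cross $D[T]$. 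Running through the admissible $c$ and collating these exclusions is exactly the bookkeeping of part~(I): it forces $D[xu]$ and $D[xs]$ to cross nothing, $D[xv]$ to cross only $D[uw]$, and $D[xw]$ to cross only $D[sv]$ and $D[tv]$. The single two-valued case is $D[xt]$: although $R_1$ is incident with $t$, the edge $xt$ (unlike $xs,xu$) may wind around the vertex $s$, and the remaining constraints leave exactly the two routings in the statement, both of which occur, since each is compatible with convexity of every $K_5$ that constrains it.

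For the furthermore clause, let $D[x],D[x']\in R_1$. Parts~1--3, applied to $x$ and to $x'$, show that both are planarly joined to each of the $3$-cycles $suv$, $suw$, $tuv$, $tuw$ of $J$ (none of the possibly-crossing edges $xv,xw,xt$ meets those triangles, in either of the two cases for $xt$), and plainly $D[x],D[x']$ lie on the $R_1$-side of each of them. For each such triangle $T$ the $K_5$ on $V(T)\cup\{x,x'\}$ is convex (again it sits inside a $K_7$ containing $J$), so Corollary~\ref{co:twoPlanarToTriangle} puts $D[xx']$ on the $R_1$-side of $D[T]$. Intersecting the four sides --- and, in the sub-case that $xt$ crosses nothing, also the $R_1$-side of $D[stu]$ --- gives $D[xx']\subseteq\overline{R_1}$, once one checks, again from the picture of $\badFkF$, that this intersection is exactly $\overline{R_1}$; since $D[xx']$ passes through no vertex it meets no edge of $D[J]$, so $D[xx']\subseteq R_1$. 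This is the intersection device already used for Lemma~\ref{lm:naturalOtherVertices}\,(\ref{it:outside}).

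I expect the only real work --- and the only place an error could hide --- to be the combinatorics behind the phrase ``reading off from the picture of $\badFkF$'': identifying, for each crossing $K_4$ $D[J-c]$, which face contains $R_1$ and which side of each of its four $3$-cycles is convex, then verifying that the union of all the resulting exclusions leaves exactly the five routings listed (with the single ambiguity for $xt$), and that the intersection of sides in the previous paragraph collapses to $\overline{R_1}$. Once those finitely many positions are fixed, every individual deduction is an immediate appeal to Observation~\ref{obs:crossingK4}, convexity, and the no-double-crossing rule, exactly as in part~(I).
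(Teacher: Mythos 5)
Your treatment of items 1--3 follows the paper's own route: promote each $D[(J-c)+x]$ to a convex $K_5$ via the hypothesis, and let Observation~\ref{obs:crossingK4} plus convexity pin down the routings. You do not actually carry out the collation of exclusions (the paper at least names, for each edge $xc'$, the two deleted vertices whose $K_5$'s determine it), but the method is the right one and I will not press that point.

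The genuine gap is in the furthermore clause. You assert that the intersection of the closed $R_1$-sides of the four $3$-cycles $suv$, $suw$, $tuv$, $tuw$ ``is exactly $\overline{R_1}$,'' and that this is a check one reads off the picture. That check fails. The edge $st$ is uncrossed in $\badFkF$ and is not an edge of any of those four $3$-cycles; consequently a point just across $D[st]$ from $R_1$ lies on the same side of each of the four closed curves $D[suv]$, $D[suw]$, $D[tuv]$, $D[tuw]$ as $R_1$ does, so the face of $D[J]$ on the far side of $D[st]$ is contained in your intersection as well. Hence Corollary~\ref{co:twoPlanarToTriangle} applied to those triangles does \emph{not} confine $D[xx']$ to $\overline{R_1}$: it leaves open the possibility that $D[xx']$ crosses $D[st]$ (and, similarly, it does not by itself dispose of $D[vw]$). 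This is precisely the residual case the paper must handle, and it does so with a separate parity argument: letting $\times$ be the crossing of $D[su]$ with $D[tv]$, the simple closed curve formed by $D[st]$ together with the arcs of $D[su]$ and $D[tv]$ from $s$ and $t$ to $\times$ must be crossed by $D[xx']$ an even number of times; the triangle arguments forbid crossings on the two arcs, and a good drawing allows at most one crossing with $D[st]$, so there are none. Your fallback of adding the triangle $stu$ only applies in the sub-case where both $xt$ and $x't$ are uncrossed, so it cannot close the gap in general. You need either the paper's parity step or some equivalent argument that explicitly kills a crossing of $D[xx']$ with $D[st]$.
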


\begin{proof}
\rbrnew{The convexity of $D[(J-t)+x]$ and $D[(J-u)+x]$ shows $D[xs]$ does not cross the 4-cycles $D[suvw]$ and  $D[stwv]$, respectively.  Likewise, the routing of $D[xu]$ is determined by the convexity of $D[(J-t)+x]$ and $D[(J-s)+x]$, together with the fact that $D[xu]$ does not cross $D[uw]$.}

Similarly, the convexity of $D[(J-s)+x]$ and $D[(J-t)+x]$ determine the routings of $D[xv]$ and $D[xw]$.  

The convexity of $D[(J-s)+x]$ determines the routing of $D[xt]$, except with respect to $D[s]$, leaving the two options described.

For the furthermore conclusion, $D[x]$ and $D[x']$ are planarly joined to the 3-cycle \rbrnew{$D[svu]$}.  Corollary \ref{co:twoPlanarToTriangle} shows that $D[xx']$ is disjoint from \rbrnew{$D[svu]$}.  In the same way, $D[xx']$ is disjoint from $D[swu]$, and $D[tvu]$.  Thus, $D[xx']$ can only cross $D[J]$ on $D[st]$.  However, letting $\times$ denote the crossing of $D[su]$ with $D[tv]$, $D[xx']$ must cross the 3-cycle $D[st\times]$ an even number of times and it can only cross it on $D[st]$, which is impossible.
\end{proof}

The homeomorphism $\Theta$ implies a completely symmetric statement when $x\in R_2$.   We provide it here for ease of reference.

\begin{claim}\label{cl:R2routings}
If $D[x]\in R_2$, then, in $D[J+x]$:
\begin{enumerate}
\item $D[xu]$ and $D[xw]$ do not cross $D[J]$;
\item $D[xt]$ \rbr{crosses $D[J]$ only $D[us]$, and $D[xs]$ crosses $D[J]$ only on $D[tw]$ and \rbrnew{$D[tv]$}}; and
\item $D[xv]$ either does not cross $D[J]$ or \rbr{it crosses $D[J]$ precisely on $D[wt]$, $D[ws]$, and $D[wu]$}.
\end{enumerate}
Furthermore, if $D[x],D[x']\in R_2$, then $D[xx']\subseteq R_2$.
\end{claim}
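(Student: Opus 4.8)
The plan is to obtain Claim \ref{cl:R2routings} as a formal consequence of Claim \ref{cl:R1routings} via the symmetry $\Theta$, so that no new geometric work is needed. The first thing I would do is note that the proof of Claim \ref{cl:R1routings} uses only the following features of $D$: that $D[x]\in R_1$; that $D[(J-a)+x]$ is convex for each $a\in V(J)$ (these convexities come from the hypothesis of Theorem \ref{th:oneBadK5}); that $D[xu]$ does not cross $D[uw]$, which is just goodness; and, for the ``furthermore'' clause, Corollary \ref{co:twoPlanarToTriangle} together with a parity count of the crossings of $D[xx']$ with $D[st\times]$. In other words, Claim \ref{cl:R1routings} is really a statement about an arbitrary good drawing of $K_6$ (and, for the last clause, $K_7$) whose restriction to five vertices labelled $s,t,u,v,w$ is $\badFkF$, whose remaining vertex (or vertices) lies in the face $R_1$, and all of whose $K_5$-subdrawings other than $\badFkF$ are convex.

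Next I would invoke the involution $\Theta$ of the sphere that fixes $D[J]$ setwise, induces on $V(J)$ the permutation $\sigma$ with $s\leftrightarrow w$, $t\leftrightarrow v$, $u\mapsto u$, and satisfies $\Theta(R_1)=R_2$ (hence $\Theta(R_2)=R_1$). Given a vertex $x$ with $D[x]\in R_2$, applying $\Theta$ to the subdrawing $D[J+x]$ produces a good drawing of $K_6$ whose restriction to $V(J)$ is again $\badFkF$ (now with $a$ playing the role of $\sigma(a)$), whose sixth vertex $\Theta(D[x])$ lies in $R_1$, and whose non-$\badFkF$ $K_5$'s are convex because $\Theta$ preserves convexity while the corresponding $K_5$'s of $D$ are convex by hypothesis; the same works for two vertices $x,x'\in R_2$ using $K_7$. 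Thus Claim \ref{cl:R1routings} applies to this $\Theta$-image, and transporting its conclusions back through $\Theta$ — equivalently, applying $\sigma$ to every vertex label and interchanging $R_1\leftrightarrow R_2$ — yields exactly the three routing statements and the ``furthermore'' clause of Claim \ref{cl:R2routings}. For instance ``$D[xs]$ does not cross $D[J]$'' becomes ``$D[xw]$ does not cross $D[J]$''; ``$D[xw]$ crosses $D[J]$ only on $D[sv]$ and $D[tv]$'' becomes ``$D[xs]$ crosses $D[J]$ only on $D[tw]$ and $D[tv]$''; ``$D[xv]$ crosses $D[J]$ only on $D[uw]$'' becomes ``$D[xt]$ crosses $D[J]$ only on $D[us]$''; and the disjunctive statement about $D[xt]$ becomes the one about $D[xv]$ with $D[sv],D[sw],D[su]$ replaced by $D[wt],D[ws],D[wu]$.

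The only point requiring genuine care — the ``hard part'', such as it is — is justifying the appeal to $\Theta$ rather than taking it on faith: one must confirm from the labelled picture in Figure \ref{fg:labelledBad} that $\badFkF$ really does admit an involutory self-homeomorphism inducing $\sigma$ and swapping the two pentagonal faces $R_1$ and $R_2$, and double-check that every hypothesis used in the proof of Claim \ref{cl:R1routings} is $\Theta$-invariant (convexity, goodness, and face-membership all are). With that verified the claim is immediate; failing that, one can always fall back on literally re-running the proof of Claim \ref{cl:R1routings} with the substitution $s\mapsto w$, $t\mapsto v$, $R_1\mapsto R_2$ performed throughout, but the symmetry route is shorter and keeps the bookkeeping honest.
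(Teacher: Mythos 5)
Your proposal is correct and is exactly the paper's argument: the paper gives no separate proof of Claim \ref{cl:R2routings}, deriving it from Claim \ref{cl:R1routings} via the involution $\Theta$ (with $s\leftrightarrow w$, $t\leftrightarrow v$, $u$ fixed, $R_2=\Theta(R_1)$) and stating it ``for ease of reference.'' Your extra care in checking that the hypotheses used in the proof of Claim \ref{cl:R1routings} are $\Theta$-invariant and that the label substitution produces precisely the stated conclusions is a sound, if more explicit, rendering of the same symmetry argument.
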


Using the homeomorphism $\Theta$, we may choose the labelling of $J$ so that the number $r_1$ of vertices of $D[K_n]$ drawn in $R_1$ is at most the number $r_2$ drawn in $R_2$.

Our next claim was somewhat surprising \rbr{to us} in the strength of its conclusion.  

\begin{claim}\label{cl:baDxt}
If there is a \rbr{vertex $x$ of $K_n-V(J)$ such that $D[x]\in R_1$ and} $D[xt]$ crosses $D[sv]$, $D[sw]$, and $D[su]$, then there is a drawing $D'$ of $K_n$ such that $\crn(D')\le \crn(D)-4$ and, if $n$ is even, $\crn(D')\le \crn(D)-5$.  

\rbrnew{Symmetrically, if there is a vertex $x$ of $K_n-V(J)$ such that $D[x]\in R_2$ and $D[xv]$ crosses $D[wt]$, $D[sw]$, and $D[wu]$, then there is a drawing $D'$ of $K_n$ such that $\crn(D')\le \crn(D)-4$ and, if $n$ is even, $\crn(D')\le \crn(D)-5$.}
\end{claim}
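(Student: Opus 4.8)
The plan is to build $D'$ from $D$ by rerouting two edges, exploiting the rigid local picture forced by Claims~\ref{cl:R1routings} and~\ref{cl:R2routings}. By applying the homeomorphism $\Theta$ we may assume we are in the first case, so $D[x]\in R_1$ and $D[xt]$ crosses exactly $D[sv]$, $D[sw]$, $D[su]$ among the edges of $D[J]$; recall also that every vertex of $K_n-V(J)$ lies in $R_1\cup R_2$, that an edge between two vertices of $R_1$ (resp.\ of $R_2$) stays inside $R_1$ (resp.\ $R_2$), and that the labelling has been chosen so that $r_1\le r_2$. The structural fact I want to use is that, by the routing claims, an edge incident with a vertex $y$ of $R_2$ meets the closure of $R_1$ only in a short piece near one of $s,t,u$; in particular, for each $y\in R_2$ the edge $D[yt]$ crosses $D[su]$ exactly once and then runs to $t$, and these $r_2$ edges account for ``most'' of the crossings on $D[su]$, while $D[su]$ meets just $D[vw]$ among the edges of $D[J]$.

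For the construction: since $D[x]\in R_1$ and $t$ lies on the boundary of $R_1$, pick an arc $\alpha$ from $D[x]$ to $D[t]$ with interior in $R_1$, hence meeting no edge of $D[J]$, and chosen to cross as few other edges as possible; let $D'[xt]=\alpha$. Also reroute $D[su]$ inside a thin corridor following $D[xs]\cup D[xu]$ (both of which meet no edge of $D[J]$ by Claim~\ref{cl:R1routings}), turning at $x$ on the side away from $\alpha$. Neither new arc meets an edge of $D[J]$, so we immediately destroy four crossings: the three crossings of $D[xt]$ with $D[su],D[sv],D[sw]$, and the crossing of $D[su]$ with $D[vw]$. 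Only crossings on $\alpha$ and on the rerouted $D[su]$ can change, and here the routing claims take over: $\alpha$ can be drawn to avoid every edge incident with $s,t,u$, or $x$, so that it is forced to cross only the $R_1$-portions of edges $D[y'v]$ and $D[y'w]$ ($y'\in R_1$) and edges joining $x$ to other vertices of $R_1$ --- and, crucially, no more of them than the old $D[xt]$ already crossed; likewise the rerouted $D[su]$ meets only edges that cross $D[xs]\cup D[xu]$, which (again by the claims) lie on the $R_1$-side, against the $r_2$ edges $D[yt]$, $y\in R_2$, that it no longer crosses. Carrying out this tally, the net change should be at least $-\bigl(4+(r_2-r_1)\bigr)$, so $\crn(D')\le\crn(D)-4$.

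When $n$ is even, $r_1+r_2=n-5$ is odd, so $r_1\ne r_2$, and with $r_1\le r_2$ this forces $r_2-r_1\ge 1$, giving $\crn(D')\le\crn(D)-5$. The symmetric statement, for $D[x]\in R_2$ with $D[xv]$ crossing $D[wt],D[sw],D[wu]$, is precisely the image of this configuration under $\Theta$ and so follows immediately. I expect the real obstacle to be the crossing audit in the second paragraph: verifying that $\alpha$ and the rerouted $D[su]$ can genuinely be drawn meeting only the listed edges, and with multiplicities no larger than those they replace. This is where Claims~\ref{cl:R1routings} and~\ref{cl:R2routings} must be pushed hardest, and it may be necessary to invoke the hypothesis on $K_7$-subdrawings once more --- applied to $J$ together with $x$ and a vertex of $R_2$ --- in order to control how $D[xt]$, which winds around $s$, meets the edges at $s$, and thereby to guarantee that $D[xt]$ had at least as many crossings with the edges entering $R_1$ as $\alpha$ does.
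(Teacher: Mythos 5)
Your construction diverges from the paper's at the crucial point, and the step you yourself flag as ``the real obstacle'' is a genuine gap, not a routine verification. The paper reroutes only the single edge $xt$, pushing it just past $D[s]$ alongside the path $D[xst]$, and the whole argument rests on an extremal choice you do not make: among all eligible $x$, pick one whose crossing of $D[xt]$ with $D[sv]$ is closest to $D[s]$, and then show (via convexity of $(J-\{u,w\})+\{x,y\}$) that the disc bounded by the 3-cycle $D[sxt]$ on the side away from $u,v,w$ contains no vertex of $K_n$. That emptiness is exactly what bounds the new crossings of the rerouted $xt$ by the $r_1-1$ edges from $R_1$-vertices to $s$, while the $r_2$ edges from $R_2$-vertices to $s$ and the three crossings with $D[J]$ are all saved; the remaining edges are paired off by showing that anything crossing $D[st]$ must be an $R_1$--$R_2$ edge that also crosses $D[xt]$ in both drawings. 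Without an analogous emptiness statement, your arc $\alpha$ from $D[x]$ to $D[t]$ through $R_1$ has no controlled crossing count: $R_1$ is subdivided by the edges from the other $r_1-1$ vertices of $R_1$ to $s$, $t$, $u$ and by the $R_1$--$R_1$ edges, any of which can separate $D[x]$ from $D[t]$ inside $R_1$ and force crossings of $\alpha$ that the original $D[xt]$ never had. The same objection applies, even more strongly, to your second rerouting of $su$ along $D[xs]\cup D[xu]$: the new $su$ inherits every crossing of $D[xs]$ and $D[xu]$, and nothing in Claims~\ref{cl:R1routings} or~\ref{cl:R2routings} bounds how many there are relative to the $r_2$ crossings you save.

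Two smaller but real errors compound this. First, your bookkeeping of the crossings of $su$ with $D[J]$ is wrong: the paper explicitly uses the crossing of $D[su]$ with $D[tv]$ (it is the point $\times$ in the proof of Claim~\ref{cl:R1routings}), so ``$D[su]$ meets just $D[vw]$ among the edges of $D[J]$'' is false, and your tally of ``four destroyed crossings'' is not the right four. Second, a two-edge rerouting leaves you having to re-verify goodness of the drawing and the interaction between the two new arcs, none of which is addressed. The parity argument for even $n$ and the use of $\Theta$ for the symmetric case are fine, but the core of the claim --- that the rerouting strictly saves at least $4+(r_2-r_1)$ crossings --- is not established by your proposal.
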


\begin{proof}
Choose \rbr{such an $x$ so} that $D[xt]$ crosses $D[sv]$, $D[sw]$, and $D[su]$ and such that, among all such $x$, the crossing of $D[xt]$ with $D[sv]$ is as close to $D[s]$ on $D[sv]$ as possible.  Let $\Delta$ be the closed disc bounded \rbr{by the 3-cycle} $D[sxt]$ that does not contain the vertices $D[\{v,u,w\}]$.

If there is a vertex $y$ of $K_n$ \rbr{such that $D[y]$ is} in the interior of $\Delta$, then $D[y]$ is in the face of $D[J+x]$ contained in $\Delta$ and incident with \rbrnew{$D[sx]$}.  However, the convexity \rbr{in $D$} of $(J-\{u,w\})+\{x,y\}$ implies $D[yt]$ crosses $D[sv]$ closer to $s$ in $D[sv]$ than $D[xt]$ does, contradicting the choice of $x$.  Therefore, no vertex of $D[K_n]$ is in $\Delta$.

The drawing $D'$ is obtained from $D$ by rerouting $xt$ to go alongside the path $D[xst]$, on the side not in $\Delta$.  (That is, $D[xt]$ is pushed to the other side of $D[s]$.)

The hardest part of the analysis of the crossings of $D'$ compared to $D$ is determining what happens to an edge of $D[K_n]$ that crosses $D[st]$.  No edge of $D[J]$ crosses $D[st]$. Claims \ref{cl:R1routings} and \ref{cl:R2routings} imply that:  no edge from a vertex in $R_1\cup R_2$ to a vertex in $D[J]$ crosses $D[st]$; and no edge with both incident vertices in the same one of $R_1$ or $R_2$ crosses 
$D[st]$.  Thus, the only possible crossing of \rbr{$D[st]$} 
is by an edge $D[yz]$, with $D[y]\in R_1$ and $D[z]\in R_2$.  

Because of the routing of $D[sz]$, $D[yz]$ cannot also cross $D[xs]$.  Therefore, $D[yz]$ also crosses $D[xt]$.  It follows that such an edge has the same number of crossings of $xt$ in both $D$ and $D'$. \rbr{Therefore,} any edge that crosses $D[xs]$ crosses $D[xt]$ and so has the same number of crossings with $D[xt]$ \rbrnew{and $D'[xt]$}.

The only changes then are in the number of crossings of $D[xt]$ with edges incident with $D[s]$ and the number of crossings with $D[J]$.  There are 3 fewer of the latter.  From $R_1$ to $D[s]$, there are at most $r_1-1$ crossings of $D'[xt]$.  From $R_2$ to $D[s]$, we have lost $r_2$ crossings of $D[xt]$.  Thus, $D'$ has at least $(r_2-(r_1-1))+3 = (r_2-r_1)+4$ fewer crossings than $D$.  This proves the first conclusion.

Since $n=5+r_1+r_2$, if $n$ is even, then $r_1\ne r_2$ and, therefore, $r_2-r_1\ge 1$.  In this case $D'$ has at least 5 fewer crossings, as claimed.
\end{proof}

It follows from Claim \ref{cl:baDxt} that we may assume \rbrnew{that, for $D[x]\in R_1$, $D[xt]$ is} disjoint from $D[J]$.  \rbrnew{Symmetrically, for $D[x]\in R_2$, $D[xv]$ is disjoint from $D[J]$.}   Let $D'$ be obtained from $D$ by rerouting $D[tv]$ on the other side of the path $D[tsv]$.   Combining this with the other information from Claim \ref{cl:R1routings}, we have the following:

\medskip\noindent\rbrnew{{\bf $R_1$ Assumption:}  If $D[x]\in R_1$, then $D[x]$ is planarly joined to $D[J-w]$.}

\medskip
There are two claims that complete the proof of Theorem \ref{th:oneBadK5}.  The first, similar to Claim \ref{cl:baDxt}, shows that there are at least 2 fewer crossings in $D'$ (3 if $n$ is even).  The second shows that $D'$ satisfies the hypotheses of Theorem \ref{th:oneBadK5}.  Therefore, there is a third drawing $D''$ with at least two fewer crossings than $D'$, as required.

\begin{claim}
$\crn(D')\le \crn(D)-((r_2-r_1)-2).$
\end{claim}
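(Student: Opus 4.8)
The plan is to treat the claim as a single‑edge rerouting estimate. Because $D'$ is obtained from $D$ only by rerouting $tv$ alongside $D[tsv]$, and no edge incident with $t$ or $v$ can cross $tv$ in either drawing, $\crn(D)-\crn(D')$ equals the number of crossings carried by $D[tv]$ in $D$ minus the number carried by $D'[tv]$ in $D'$. So the whole argument reduces to identifying and matching up these two collections of crossings, in three steps: enumerate what meets $D[tv]$, specify $D'[tv]$ precisely, and compare.

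First I would enumerate the crossings on $D[tv]$. Inside $J=\badFkF$: Observation~\ref{obs:3crossedTriang} and the labelling of Figure~\ref{fg:labelledBad} identify $tv$ as the edge meeting the $3$-cycle $D[suw]$ three times, so $D[tv]$ crosses exactly $D[su]$, $D[sw]$, and $D[uw]$ among the edges of $D[J]$. For edges with exactly one end in $J$, Claims~\ref{cl:R1routings} and~\ref{cl:R2routings} are decisive: the only edge from $D[x]\in R_1$ that meets $D[tv]$ is $D[xw]$ (once), and the only one from $D[x]\in R_2$ is $D[xs]$ (once), contributing $r_1+r_2$ crossings. For edges with no end in $J$: if both ends lie in $R_1$, or both in $R_2$, the edge is confined to $R_1$ or to $R_2$ (Claims~\ref{cl:R1routings},~\ref{cl:R2routings}) and misses $D[tv]$; the remaining type is an edge $yz$ with $D[y]\in R_1$ and $D[z]\in R_2$, and for these I would prove that the crossing count with $tv$ is unchanged by the rerouting. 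This last point is the crux. The available tools are: from the proof of Claim~\ref{cl:baDxt}, among all edges of $K_n$ only edges of this $yz$-form can cross $D[st]$, and such a $yz$ crosses the $3$-cycle $D[st\times]$ an even number of times; combining this with Corollary~\ref{co:twoPlanarToTriangle} applied to the $K_5$'s spanned by $y$, $z$ and three vertices of $J$ should pin down the behaviour of $D[yz]$ near $s$ tightly enough that its crossings with $tv$ are simply transported to $D'[tv]$, with no net change.

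Finally I would specify $D'[tv]$: route it in a thin neighbourhood of $D[tsv]$, on the side chosen (reading off Figure~\ref{fg:labelledBad}) so that the short turn about $s$ crosses no edge incident with $s$; along the $D[sv]$-portion it then meets each $D[xw]$ ($D[x]\in R_1$) exactly once, and meets $D[uw]$ exactly when $D[sv]$ does in $D$. Comparing against the enumeration: $D'[tv]$ drops the crossings with $D[su]$ and $D[sw]$ (a guaranteed loss of $2$ in $D[J]$, and possibly also the crossing with $D[uw]$), drops all $r_2$ crossings with the edges $D[xs]$, $D[x]\in R_2$, keeps the $r_1$ crossings with the edges $D[xw]$ and whatever $D[uw]$-crossing persists, and is unchanged on the $R_1$--$R_2$ edges by the previous step. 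Hence $\crn(D')\le\crn(D)-(r_2+2)$, which since $r_1\ge0$ already yields the asserted $\crn(D')\le\crn(D)-((r_2-r_1)-2)$; and if $n$ is even then $r_1+r_2=n-5$ is odd, so $r_2\ge r_1+1\ge1$ and the same estimate gives $\crn(D')\le\crn(D)-3$. The main obstacle throughout is the transport argument for the $R_1$--$R_2$ edges; the rest is a direct reading of the earlier claims.
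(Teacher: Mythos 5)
Your overall strategy is the paper's: only the edge $tv$ changes, so it suffices to compare the crossings on $D[tv]$ with those on $D'[tv]$, and your enumeration of the crossings on $D[tv]$ (the three crossings with $D[su]$, $D[sw]$, $D[uw]$ inside $J$; the $r_1+r_2$ crossings with the edges $D[xw]$ for $D[x]\in R_1$ and $D[xs]$ for $D[x]\in R_2$; and the mixed $R_1$--$R_2$ edges) is correct. However, there is a genuine error in your specification of $D'[tv]$: you assert that the detour can be drawn so that ``the short turn about $s$ crosses no edge incident with $s$''. This is impossible whenever $r_1>0$. The edges $st$ and $sv$ split the rotation at $s$ into two sectors; the $r_1$ edges $D[sx]$ with $D[x]\in R_1$ lie entirely inside the face $R_1$ and so leave $s$ in one sector, while $D[su]$, $D[sw]$ and the $r_2$ edges $D[sx]$ with $D[x]\in R_2$ (all of which cross $D[tv]$) leave $s$ in the other. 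Whichever sector the detour passes through, it crosses every edge departing $s$ into that sector, so the best one can do is trade the $2+r_2$ old crossings at $s$ for $r_1$ new ones. This is exactly why the saving in the claim involves $r_2-r_1$ and not $r_2$; your stronger bound $\crn(D')\le\crn(D)-(r_2+2)$ is unjustified (and false in general), although the corrected count $(r_2-r_1)+2$ still yields the stated inequality.

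The step you yourself flag as the crux --- showing that an edge $yz$ with $D[y]\in R_1$ and $D[z]\in R_2$ contributes equally to $D[tv]$ and to $D'[tv]$ --- is also left unproved, and it does need an argument. What is required is (i) that no edge crosses both $D[ts]$ and $D[sv]$ (the paper gets this from the known routing of $D[xs]$ for $D[x]\in R_2$, which would have to cross such an edge, together with the confinement of edges with both ends in $R_1$ and the determined routings from $R_1$ to $J$), and (ii) that $D[y]$ and $D[z]$ lie on the same side of the simple closed curve $D[tv]\cup D[ts]\cup D[sv]$ (both $R_1$ and $R_2$ are faces of $D[J]$ incident with $u$, hence both lie in the disc bounded by that curve that contains $D[u]$). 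Given (i) and (ii), the parity of intersections with that closed curve forces $\crn(yz,tv)=\crn(yz,ts)+\crn(yz,sv)$, which is the exact transport you want; without (ii) the same parity argument could just as well create a new crossing on the rerouted edge. With these two repairs your accounting closes and agrees with the paper's.
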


\begin{proof}
The proof is very similar to that of Claim \ref{cl:baDxt}.  The main point is to see that no edge $e$ \rbr{can have $D[e]$} cross both $D[ts]$ and \rbrnew{$D[sv]$}.  For $D[x]\in R_2$, the routing of $D[xs]$ is known; it would necessarily cross such a $D[e]$, whence $e$ is not incident with $x$.  Any edge with both ends in $R_1$ is contained in $R_1$, and so is not $D[e]$.  The only possibility is an edge from a vertex in $R_1$ to a vertex of $D[J]$, and these routings are all determined by Claims \ref{cl:R1routings} and \ref{cl:baDxt}.  Therefore, there is no such $e$.

It is now easy to see that there are $(r_2-r_1)+2$ fewer crossings of $D'[tv]$ with edges incident with $s$ than there are of $D[tv]$.   All other crossings of $D'[tv]$ pair off with crossings of $D[tv]$.
\end{proof}

Finally, we show that the drawing $D'$ satisfies the \redit{hypotheses} of Theorem \ref{th:oneBadK5}.  It is routine to verify that $D'[J]$ is $\badTkF$.    Now let $N$ be a $K_5$ in $K_n$ such that $N\cap J$ has 3 or 4 vertices.

If any of $s,t,v$ is not in $N$, then $D'[N]$ is homeomorphic to $D[N]$ and so is convex.  Thus, we may assume $s,t,v$ are all in $N$.  

\medskip\noindent{\bf Case 1:}  {\em $N\cap J$ has four vertices.}

\medskip In this case, \redit{there is a vertex $x$ not in $J$ such that} $N$ is either \rbrnew{$(J-w)+x$ or $(J-u)+x$}.  If $D[x]$ is in $R_1$, then the routings are determined and we can see by inspection that \rbrnew{$D'[N]$} is, respectively, the $K_5$ with 1 crossing \rbrnew{or} the convex $K_5$ with 3 crossings.

If $D[x]\in R_2$, then \rbrnew{again the routings are determined.  In this case, $D'[(J-u)+x]$ and $D'[(J-w)+x]$ are both the $K_5$ with  1 crossing.}

\medskip\noindent{\bf Case 2:}  {\em $N\cap J$ has 3 vertices.}

\medskip  \rbrnew{In this case $N= (J-\{u,w\})+\{x,y\}$.  Since $D[x],D[y]\in R_1\cup R_2$, they are both on the same side of $D[stv]$.   The routings from either to $D[J]$ are determined by Claims \ref{cl:R1routings} and \ref{cl:R2routings} and the assumption following the proof of Claim \ref{cl:baDxt}.   Only when $D[x]$ and $D[y]$ are in different ones of $R_1$ and $R_2$ is it possible that $D[xy]$ crosses $D[stv]$.}

\rbrnew{We consider the three possibilities for $D[x]$ and $D[y]$.}  

\medskip\noindent{\bf Subcase 2.1:}  {\em $D[x]\in R_1$ and $D[y]\in R_2$.}

\medskip \rbrnew{All routings in $D'[N]$ are determined except for $D[xy]$.  The 4-cycle $D[xvyt]$ is uncrossed in $D[N-xy]$.  As $D$ is a drawing,  $D[xy]$ does not cross $D[xvyt]$.  Therefore, either $D[xvyt]$ or $D[xvy]$ is a face of $D'[N]$, showing $D'[N]$ is convex.}

\medskip\noindent{\bf Subcase 2.2:}  {\em $D[x]$ and $D[y]$ are both in $R_2$.}

\medskip
\rbrnew{Since $D[x]$ and $D[y]$ are both planarly joined to $D'[stv]$ and $D[xy]$ does not cross $D'[stv]$, $D'[stv]$ bounds a face of $D'[N]$.  Thus, $D'[N]$ is  convex.}

\medskip\noindent{\bf Subcase 2.3:}  {\em $D[x],D[y]$ are both in $R_1$.}

\medskip Suppose $D'[N]$ is not convex.  Then Corollary \ref{co:twoBadSides} implies there is a 3-cycle $T$ in $N$ such that the two vertices $z,z'$ of $N$ not in $T$ are in different faces of $D'[T]$ and both $D'[T+z]$ and $D'[T+z']$ are crossing $K_4$'s.

Since both $x$ and $y$ are in the same face of $D'[stv]$, $T\ne stv$.  If $a\in \{s,t,v\}$, then the routings the edges from $x$ and $y$ to $stv$ show that the two vertices in $\{s,t,v\}\setminus \{a\}$ are on the same side of $D'[xya]$, so $xya\ne T$.  The only remaining possibility is that $T$ has $x$, say, and two of $s,t,v$.

\begin{claim}\label{cl:tvx}  The 3-cycle $D'[tvx]$ has no convex side.
\end{claim}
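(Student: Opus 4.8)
The plan is to work inside the $K_5$ on $\{s,t,v,x,y\}$ and exploit the $R_1$ Assumption (which we have in force after the proof of Claim \ref{cl:baDxt}), together with Claim \ref{cl:R1routings}'s ``furthermore'' conclusion, that both $D[x]$ and $D[y]$ lie in $R_1$, are planarly joined to $D[J-w]$, and $D[xy]\subseteq R_1$. First I would pin down the drawing $D'[N]$ combinatorially: in $D'$ the edge $tv$ runs alongside the path $D[tsv]$, so in $D'[N]$ none of the edges incident with $x$ or $y$ crosses $D'[J\cap N]$ except possibly $D'[xy]$, and in fact since $D[x],D[y]\in R_1$ with $D[xy]\subseteq R_1$, and $R_1$ lies on one fixed side of $D[stv]$, the whole subdrawing $D'[\{s,t,v,x,y\}]$ places both $x$ and $y$ (and the arc $D'[xy]$) in the face of $D'[stv]$ that was $R_1$. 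The key geometric point to extract from the routings in Claim \ref{cl:R1routings} is \emph{how} $x$ and $y$ sit relative to each other inside that face: the edges $D'[xs],D'[xt],D'[xv]$ partition the old $R_1$ into three subfaces, and $D[y]$ lies in exactly one of them; because $D'[yt]$ (like $D'[xt]$) is disjoint from $D'[J]$ and $D'[yv]$ meets $D'[J]$ only on the same edge pattern, the only consistent placement is the subface of $D'[stx]$ incident with the edge $D'[tx]$ — equivalently, $D[xy]$ crosses $D[sv]$ once. This is precisely the configuration in which the 4-point property fails.

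The second step is to identify $D'[tvx]$ explicitly. Label the edge $sv$ so that the crossing of $D'[xy]$ with $D'[sv]$ exists; then in the $K_4$ on $\{t,v,x,y\}$ the edge $D'[xy]$ crosses $D'[tv]$ (because $D'[tv]$ hugs $D'[tsv]$ and $D'[xy]$ has crossed $D'[sv]$ between $s$ and the relevant endpoint), so $D'[\{t,v,x,y\}]$ is a crossing $K_4$ with $D'[xy]$ and $D'[tv]$ the crossing pair. By Observation \ref{obs:crossingK4}, the side of the 3-cycle $D'[tvx]$ that contains $D'[y]$ is not convex. For the other side, I would bring in a fifth vertex: the vertex $s$. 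One checks from the routings that $D'[s]$ lies on the opposite side of $D'[tvx]$ from $D'[y]$ (since $D'[xy]$ crosses $D'[sv]$, separating $s$ from $y$ across the arc of $D'[tvx]$ formed by $D'[xt]\cup$ part of $D'[tv]\cup D'[vx]$), and that some edge among $D'[st],D'[sv],D'[sx]$ crosses $D'[tvx]$ — indeed $D'[sx]$ crosses $D'[tv]$, because after the reroute $D'[tv]$ runs next to $D'[s]$ so $D'[sx]$, which in $D$ went straight from $s$ into $R_1$, now separates from $D'[tv]$ by exactly one crossing. Hence the $s$-side of $D'[tvx]$ is also non-convex, and the claim follows.

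The main obstacle I expect is the bookkeeping in that last step: confirming, purely from the routing data in Claims \ref{cl:R1routings}, \ref{cl:R2routings} and \ref{cl:baDxt}, exactly which of the edges $D'[st]$, $D'[sv]$, $D'[sx]$ acquires a crossing with $D'[tvx]$ after $tv$ is pushed past $s$, and verifying that $D'[s]$ genuinely lands on the far side of $D'[tvx]$ from $D'[y]$. This is a finite case analysis — there are only a couple of sub-positions for $D[y]$ within the three subfaces of $R_1$ cut out by the $x$-edges — but it must be done carefully because a single misread crossing flips a ``non-convex'' to ``convex''. Once the separation $D'[s]\mid D'[y]$ across $D'[tvx]$ is nailed down and one crossing edge is exhibited on each side, Observation \ref{obs:crossingK4} (applied twice, once with $y$ and once with $s$) finishes it, and this contradicts the hypothesis that $D'[N]$ has a 3-cycle $T=tvx$ with $z,z'$ split and $D'[T+z],D'[T+z']$ both crossing $K_4$'s in the way Corollary \ref{co:twoBadSides} would require — completing the reduction and hence Subcase 2.3.
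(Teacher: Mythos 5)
Your argument breaks down at its central step: the assertion that $D[xy]$ crosses $D[sv]$ (and hence that $D'[xy]$ crosses the rerouted $D'[tv]$). This directly contradicts the ``furthermore'' conclusion of Claim \ref{cl:R1routings}, which you yourself invoke: since $D[x]$ and $D[y]$ are both in $R_1$, the arc $D[xy]$ is contained in $R_1$, which is a face of $D[J]$, so $D[xy]$ is disjoint from every edge of $D[J]$, in particular from $D[sv]$ and $D[st]$. It follows that $D'[xy]=D[xy]$ cannot cross $D'[tv]$ either: $D'[tv]$ runs in a thin strip alongside the path $D[tsv]$, and an arc disjoint from that path that entered the strip would have to cross $D'[tv]$ twice, which is forbidden. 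So the $K_4$ on $\{t,v,x,y\}$ is not shown to be a crossing $K_4$ with $xy$ and $tv$ as the crossing pair, and your proof that the $y$-side of $D'[tvx]$ is non-convex collapses. (The preparatory step is also unjustified: nothing forces $D[y]$ into the particular subface of $R_1$ you select, and the ``equivalently, $D[xy]$ crosses $D[sv]$ once'' is where the false statement enters.)

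There is also a structural problem. The claim is being proved inside the supposition that $D'[N]$ is not convex, which (after the narrowing in the paragraph before the claim) supplies a 3-cycle $T\in\{stx,svx,tvx\}$ both of whose sides fail to be convex. Your proof never uses this; it purports to show unconditionally that $D'[tvx]$ has no convex side. But the paragraphs following the claim deduce from exactly that statement that the $K_5$ induced by $t,v,w,x,y$ is non-convex in $D$, contradicting the hypothesis of Theorem \ref{th:oneBadK5}; an unconditional proof of the claim would therefore show that the hypothesis can never hold in the Subcase 2.3 configuration, which is not the case. The paper's proof is instead a conditional reduction: it takes $T=stx$ (the case $T=svx$ being symmetric and $T=tvx$ immediate), notes that $D'[tv]$ crossing $D[sx]$ already kills the $s$-side of $D'[tvx]$, and then, after using the $R_1$ Assumption to rule out $xy$ as the edge crossing $D'[stx]$, splits on which edge at $y$ witnesses the non-convexity of the $y$-side of $D'[stx]$ --- showing in each case ($yt$ crossing $xs$, or $ys$ crossing $xt$) that an edge at $y$ also crosses $D'[tvx]$. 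You would need to restructure your argument along these lines.
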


\begin{proof}
In the alternative, $T$ is either $stx$ or $svx$.   These two situations are very similar, so we treat only $stx$, leaving the completely analogous argument for $svx$ to the reader. \redit{Our strategy is to show that assuming that $stx$ has no convex side in $D'$ implies that $tvx$ has no convex side in $D'$ either.} 

The vertices $v$ and $y$ are on different sides of $D'[stx]$ and $D[vt]$ crosses $D[sx]$, showing that the side of $D'[stx]$ containing $D[v]$ is not convex.  The edge $D[sx]$ also shows that the side of $D'[tvx]$ containing $D[s]$ is not convex. 

Likewise, there is an edge $	e$ incident with $y$ to one of $s$, $t$, and $x$ such that $D[e]$ crosses $D'[stx]$.  Notice that $D[xy]$ does not cross $D[xs]$ and $D[xt]$ by definition of drawing and $D[xy]$ does not cross $D[st]$ by the $R_1$ Assumption.  Therefore, $D[xy]$ does not cross $D[st]$ and we conclude that $D[xy]$ does not cross $D[stx]$.

\rbrnew{Next suppose that that $D[yt]$ crosses $D[xs]$.  The $R_1$ Assumption shows that $D[yt]$ does not cross $D[stv]$ and so $D[yt]$ crosses $D[vx]$.  Therefore, this side of $D'[tvx]$ is also not convex.  \rbr{Combined with the second paragraph of this proof, $D'[tvx]$ is not convex.}}

\rbrnew{In the final case, $D[ys]$ crosses $D[xt]$.    As we traverse $D[ys]$ from $D[y]$, there is the crossing with $D[xt]$.  A point of $D[ys]$ just beyond this crossing is on the other side of $D[tvx]$ from both $y$ and $s$. }

\rbrnew{The edge $D[yv]$ is contained on the same side of the 3-cycle $D[sty]$ as $D[v]$.  Therefore, $D[yv]$ must also cross $D[xt]$, showing that the $D[y]$-side of $D[tvx]$ is also not convex, as required.}
\end{proof}

Notice that $D[y]$ is in one side of $D'[tvx]$ and $D[s]$ \rbrnew{is on the other}.  Since $s\notin \{t,v,x,y\}$, $D[\{t,v,x,y\}]$ and $D'[\{t,v,x,y\}]$ are homeomorphic.  Thus, the side of $D[tvx]$ that contains $D[y]$ is not convex in $D$.

On the other hand, we know that, in $D$, $D[w]$ is on the other side of $D[tvx$ from $D[y]$.  However, $D[wx]$ crosses $D[tv]$.  This shows that the side of $D[tvx]$ containing $D[w]$ is not convex.  Combined with the preceding paragraph, the $K_5$ induced by $t,v,w,x,y$ is not convex in $D$, contradicting the hypothesis of the theorem.  \rbrnew{This completes the proof of Subcase 2.3 and the theorem.}
\end{cproofof}

\lateoct{The condition in Theorem \ref{th:oneBadK5} that any $K_7$ containing $J$ has no other $K_5$ isomorphic to either $\badTkF$ or $\badFkF$ is a strong one. It would be significant progress to prove some analogue of Theorem \ref{th:oneBadK5} with a weaker hypothesis on extensions $J$.} 

\lateoct{Indeed, one might expect that no hypothesis beyond the existence of $J$ is required, as is easily verified for $n=7$ (and fully proved in \cite[Lemma 7.5, p.~417]{mr}). For $n=8$, one can prove easily that the weaker hypothesis of non-convexity suffices. To see why, note that each $K_5$ in a $K_8$ is in three $K_7$'s in the $K_8$. With a non-convex $K_5$ in the $K_8$, its three extensions to $K_7$'s in the $K_8$ would each have at least 11 crossings.  Lemma \ref{lm:arith}  guarantees that the $K_8$ has at least 20 crossings. (Aichholzer's computations extend the sufficiency of the weaker condition up to $K_{12}$). }

\lateoct{   A similar argument shows that a non-convex $K_9$ cannot have 34 crossings. Let $J$ be any non-convex $K_5$ in a $K_9$ having 34 crossings.  Then $J$ is contained in four $K_8$'s in the $K_9$. The previous paragraph shows each of these $K_8$'s has at least 20 crossings. Lemma \ref{lm:arith} and the assumption that the $K_9$ has only 34 crossings shows that the five remaining $K_8$'s would have to be optimal and hence convex. Thus, $J$ is the only non-convex $K_5$ in the $K_9$ and so the hypothesis of Theorem \ref{th:oneBadK5} trivially holds. }

\lateoct{   For  this argument to work, it suffices to assume a stronger version of the hypothesis of Theorem \ref{th:oneBadK5}: there is only one non-convex $K_5$ in the entire $K_n$. In fact, Theorem \ref{th:oneBadK5} evolved from this stronger hypothesis.
   }

\sept{\rbr{We close this section by providing an example of a drawing of $K_8$ that contains an isomorph $J$ of $\badTkF$ satisfying the hypothesis of Theorem 5.1 and also contains another isomorph of $\badTkF$.  The drawing $D$, illustrated in Figure 5.4, is obtained from $TC_8$ by rerouting}
 two edges (13 around 2 and $BG$ around $P$ in Figure \ref{fg:twoBadK5}), one from each of the natural $K_4$'s on the top and bottom of the cylinder.  $1,2,3,B,R$ and \redit{$0,1,B,P,G$} are resulting $\badTkF$'s.}

\begin{figure}
\begin{center}
\scalebox{0.8}{\input{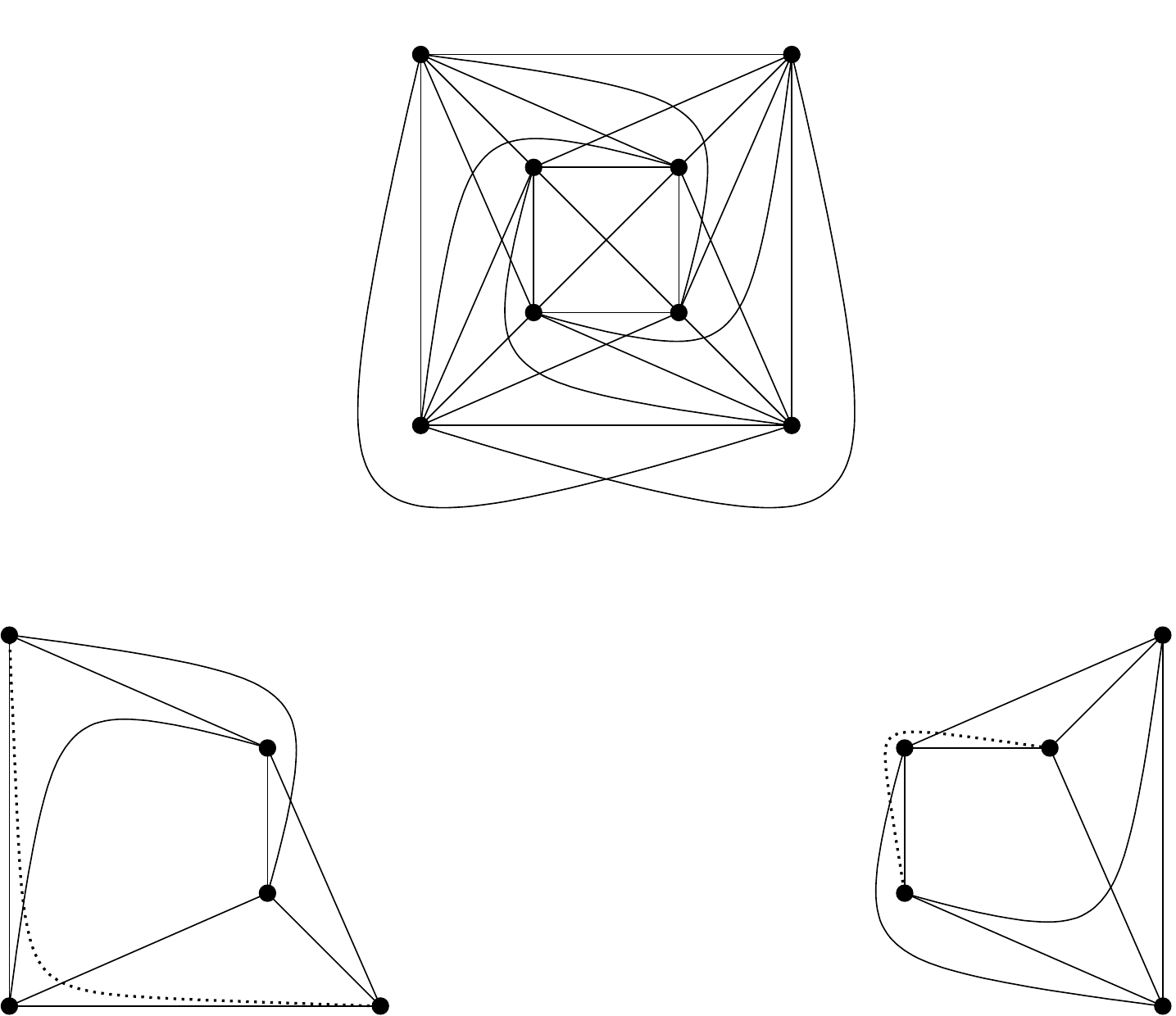_t}}
\caption{\sept{Adjusting $TC_8$ to get two $K_5$'s satisfying the hypothesis of Theorem \ref{th:oneBadK5}}.}\label{fg:twoBadK5}
\end{center}
\end{figure}

\sept{The $K_5$'s induced by $\{1,2,3,B,R\}$ and $\{0,1,B,P,G\}$ are both isomorphic to $\badTkF$.  Evidently, any $\badTkF$ or $\badFkF$ in the $K_8$ after the reroutings must contain either all of $1,2,3$ or all of $B,P,G$.    Thus, it suffices to show that no such $K_5$ (other than the two above) exists.
}

\sept{The involution determined by $0\leftrightarrow R$, $1\leftrightarrow B$, $2\leftrightarrow P$, and $3\leftrightarrow G$ is an automorphism of the drawing.  Therefore, it suffices to consider the $K_5$'s containing $1,2,3$.    Any $K_5$ consisting of all of $0,1,2,3$ and one of $B,P,G,R$ has a face bounded by the 3-cycle $012$.  Since every face in both $\badTkF$ and $\badFkF$ is incident with a crossing, these four $K_5$'s are neither $\badTkF$ nor $\badFkF$.
}

\sept{There are six $K_5$'s left to check; these have 1,2,3 and two of $B,R,G,P$.}
  
  \bigskip\noindent
\oct{\begin{tabular}{l  l}
$BP/BG$:  &$123B$ is a 4-cycle bounding a face.\\
$BR$: &This is the $\badTkF$ created by the rerouting of $13$.\\
$PG$: &This is the natural $K_5$.\\
$PR/GR$: &$123R$ is a 4-cycle bounding a face.
\end{tabular}
}



\section{Questions and Conjectures}\label{sec:future}

We conclude with a few questions and conjectures.


\begin{enumerate}

\item In Section \ref{sec:intro} we presented a table with the convexity hierarchy.  One obvious omission is a forbidden drawing characterization of when an h-convex drawing is f-convex.  We pointed out that $TC_8$ is one example of h-convex that is not f-convex.  Rerouting some of the edges between the central and outer crossing $K_4$'s produces a few more examples.

\begin{conjecture} Let $D$ be an h-convex drawing of $K_n$.  Then $D$ is f-convex if and only if, for every isomorph $J$ of $K_8$, $D[J]$ is f-convex.
\end{conjecture}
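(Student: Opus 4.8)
First I would dispose of the trivial implication. If $D$ is f-convex with a face $\Gamma$ witnessing this (Theorem \ref{th:freeFace}) and $J$ is any subgraph of $K_n$, let $\Gamma_J$ be the face of $D[J]$ containing $\Gamma$; for every $3$-cycle $T$ of $J$ the side of $D[T]$ disjoint from $\Gamma$ is the same as the side disjoint from $\Gamma_J$, and it remains convex after passing to the fewer vertices of $D[J]$. So $\Gamma_J$ witnesses f-convexity of $D[J]$; in particular every induced $D[J]$ is f-convex.

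\textbf{The reformulation.} For the substantive direction, suppose $D$ is h-convex but not f-convex; I will produce eight vertices whose induced subdrawing is not f-convex. The device is the painting version of Theorem \ref{th:freeFace}: for each crossing $K_4$ $J'$ of $D$, paint the side of the bounding $4$-cycle $D[C_{J'}]$ that contains the crossing. Since $D$ is not f-convex, the painted set is the whole sphere. The elementary observation I would record is: \emph{if $W\subseteq V(K_n)$ is such that the crossing sides of the crossing $K_4$'s contained in $K_n[W]$ already cover the sphere, then $D[W]$ is not f-convex.} Indeed, each face of $D[W]$ is disjoint from every edge of every such $K_4$, hence lies entirely on one side of each bounding $4$-cycle; being covered by the crossing sides, it lies inside one of them, so it is painted. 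Thus it suffices to find such a $W$ with $|W|=8$.

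\textbf{Localizing the obstruction.} Apply the Structure Theorem \ref{th:structure}, choosing a natural $K_r$-subgraph $J$ for which $\bar J$ is as large as possible; by Lemma \ref{lm:naturalGrows} every remaining vertex is then outside $J$, planarly joined to $J$, and any two such vertices are joined outside $J$. If there is no vertex outside $J$, then $D=\bar J$ is f-convex by Lemma \ref{lm:barJforHconvexIsFconvex}, a contradiction; so there is at least one. Lemma \ref{lm:barJforHconvexIsFconvex} also gives that $D[\bar J]$ is f-convex with the face $\Gamma$ bounded by $C_J$ as witness, so in $D$ no crossing $K_4$ whose vertices all lie in $\bar J$ has its crossing side meeting $\Gamma$. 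Since every point of $\Gamma$ is painted in $D$, each is painted by a crossing $K_4$ using at least one vertex outside $J$, and the outside vertices together with their incident edges to $C_J$ and to one another lie in $\overline\Gamma$. Using the routing facts of Lemma \ref{lm:naturalOtherVertices} and Corollary \ref{co:twoPlanarToTriangle}, the crossing analyses in Section \ref{sec:K9}, and heredity, one then argues that the crossing $K_4$'s whose crossing sides meet $\Gamma$ are governed by a local configuration attached along a single boundary pair $v_iv_{i+1}$ of $C_J$ (exactly as in the tin-can drawings), so that at most two outside vertices, a bounded number of vertices of $C_J$, and at most one interior vertex already suffice to produce a subdrawing whose crossing sides cover the sphere.

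\textbf{The main obstacle.} The crux is to show the bound is exactly $8$, not merely some constant. For this I would take a minimal counterexample --- an h-convex drawing that is not f-convex but all of whose proper induced subdrawings are f-convex --- and, via the structure above, show it has at most eight vertices. This requires a redundancy argument: when the outside vertices are numerous, all but the two ``extremal'' ones around the attaching pair $v_iv_{i+1}$ should be deletable without unpainting any point of the sphere (their crossing sides being nested inside, or adjacent to, those of the extremal ones), and likewise all but a few vertices of $C_J$ and at most one interior vertex should be deletable; the extremal data then forms a $TC_8$. Making the nesting/adjacency claim precise --- in just such a way that it \emph{fails} for $TC_8$ itself, which is genuinely minimal --- is the delicate point, and is where h-convexity (as opposed to mere convexity) is essential, since it is exactly heredity of the convex sides that constrains which $K_4$'s can be crossing and how their crossing sides sit relative to one another. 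I expect this last step to be the bulk of the work.
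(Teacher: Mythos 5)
This statement is not a theorem in the paper at all: it is stated as an open conjecture in Section~\ref{sec:future}, and the authors explicitly say in Section~\ref{sec:hereditary} that they do not know a forbidden-substructure characterization separating f-convex from h-convex --- indeed they write that it is not clear whether $TC_8$ is the only minimal distinguishing example ``or, indeed, if there are only finitely many minimal distinguishing examples.'' So there is no proof in the paper to compare against, and any purported proof must be judged entirely on its own merits.

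On those merits, your proposal is not a proof. The easy direction is fine, and the reformulation via the painting version of Theorem~\ref{th:freeFace} (an induced $D[W]$ fails to be f-convex once the crossing sides of the crossing $K_4$'s inside $W$ cover the sphere) is a sensible and correct reduction. But the two steps that carry all the content are only announced, not carried out. First, the ``localizing'' claim --- that the crossing $K_4$'s whose crossing sides meet $\Gamma$ are governed by a configuration attached along a single edge $v_iv_{i+1}$ of $C_J$ --- is asserted with a wave at Lemma~\ref{lm:naturalOtherVertices}, Corollary~\ref{co:twoPlanarToTriangle}, and ``the crossing analyses in Section~\ref{sec:K9}''; none of those results says anything of the sort, and the claim is not obviously true (in $TC_8$ itself the outer four vertices form a crossing $K_4$ whose crossing side is an entire outer region, not something pinned to one edge of the inner $4$-cycle). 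Second, you explicitly defer the bound $|W|\le 8$ for a minimal counterexample to a ``redundancy argument'' that you acknowledge is ``the delicate point'' and ``the bulk of the work.'' That deferred step \emph{is} the conjecture: bounding the size of a minimal h-convex, non-f-convex drawing is precisely the finiteness question the authors state they cannot answer. Until that argument is actually supplied, the proposal is a plausible plan of attack, not a proof.
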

\item The {\em deficiency\/} $\delta(D)$ of a drawing $D$ of $K_n$ is the number $\crn(D)-H(n)$.  The drawing $D$ has the
{\em natural deficiency property\/} if, for every vertex $v$ of $K_n$, $\delta(D-v)\le 2\delta(D)$.  If the Hill Conjecture is true for $n=2k-1$, $2k$, and $2k+1$, then every drawing of $K_{2k}$ has the natural deficiency property.  

\begin{conjecture} For every $k\ge 2$, every (convex) drawing of $K_{2k}$ has the natural deficiency property.  

\end{conjecture}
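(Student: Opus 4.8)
The plan is to let the even-order arithmetic do the bookkeeping and then use the Structure Theorem to rule out a ``catastrophic'' vertex. Lemma~\ref{lm:arith} gives $(n-4)\crn(D)=\sum_v\crn(D-v)$, and the explicit formula for $H$ yields $2k\,H(2k-1)=(2k-4)\,H(2k)$; writing $n=2k$ and subtracting, these combine into the identity
\[
\sum_{v\in V(K_{2k})}\delta(D-v)=(2k-4)\,\delta(D).
\]
Granting $\crn(K_{2k-1})=H(2k-1)$ so that every term is $\ge 0$ (for a purely convex statement one would instead want the conjectural bound $\crn(K_{2k-1})\ge H(2k-1)$, which the Structure Theorem is meant to supply), the identity already disposes of $k\le 3$: then $2k-4\le 2$, so $\delta(D-v)\le(2k-4)\,\delta(D)\le 2\,\delta(D)$. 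For $k\ge 4$ the identity shows the property is equivalent to saying no vertex falls too far below the average ``responsibility'': with $c_v:=\crn(D)-\crn(D-v)$ one has $\sum_v c_v=4\crn(D)$ and $\delta(D-v)=\delta(D)+\tfrac{2}{k}H(2k)-c_v$, so the property becomes $c_v\ge \overline c-\tfrac{k+2}{k}\,\delta(D)$ for every $v$, where $\overline c=\tfrac{2}{k}\crn(D)$. In particular it is automatic when $\delta(D)=0$ (the sum of nonnegatives being $0$ forces every $\delta(D-v)=0$), so the content is a ``spreading'' lower bound on $\min_v c_v$ when the drawing is not optimal.

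The next step is to feed in the Structure Theorem~\ref{th:structure}: a convex $D$ of $K_{2k}$ is a natural $K_r$ ($r\ge 4$) with facial $r$-cycle $C$, a set $I$ of interior vertices, and a set $O$ of exterior vertices that are pairwise joined outside $C$ and planarly joined to the $K_r$. The crossings split into the $\binom{r}{4}$ crossings inside the $K_r$, the crossings produced by the $I$-vertices (controlled by their placement among the inner faces, in the spirit of Theorem~\ref{th:emptyK4s} and the remark after it), and those produced by the $O$-vertices (whose edges meet only $C$ and one another, in the limited patterns of Lemma~\ref{lm:naturalOtherVertices}). I would then bound $\delta(D-v)$, equivalently $c_v$, by cases on the location of the vertex $v$ realizing the largest $\delta(D-v)$. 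If $v\in O$ the skeleton $K_r$ survives intact and $c_v$ is a controllable sum of exterior crossings and crossings of $C$; if $v\in I$, deleting $v$ removes exactly the crossings on its $r+|I|+|O|-1$ edges; in both cases the decrease should be at least the generic amount $\tfrac2k H(2k)$, which is what the inequality needs. The delicate case is $v\in V(K_r)$, where Lemma~\ref{lm:naturalGrows} and the case analysis in the proof of Lemma~\ref{lm:naturalCharacterization} describe how the natural $K_{r-1}$ is inherited or regrows, and one must rule out its deficiency collapsing below $\overline c-\tfrac{k+2}{k}\delta(D)$.

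I expect the case $v\in V(K_r)$ with $r$ far below $2k$ to be the main obstacle: this is a nearly optimal convex drawing carrying many interior points, exactly the regime where the deficiency of a convex drawing is least understood, and pinning it down is essentially the open problem of showing convex drawings of $K_m$ have at least $H(m)$ crossings --- now for the odd orders as well. A second, structural, obstruction is the parity jump inherent in deleting a vertex: any induction has to carry even and odd orders together, which is precisely why the remark preceding the conjecture assumes the Hill bound at $2k-1$, $2k$, and $2k+1$. A realistic first target is therefore the sub-families in which the Structure Theorem pins $r$ down to within a constant of $n$ --- e.g. the $p$-constrained convex drawings discussed after Theorem~\ref{th:emptyK4s} --- where the case analysis above should close and yield the natural deficiency property unconditionally for those classes.
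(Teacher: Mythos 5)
This statement is posed as a \emph{conjecture} in Section~\ref{sec:future}; the paper offers no proof of it, so there is no argument of the authors to compare yours against, and your proposal does not close it either. Your arithmetic is correct as far as it goes: Lemma~\ref{lm:arith} combined with the identity $2k\,H(2k-1)=(2k-4)\,H(2k)$ does yield $\sum_{v}\delta(D-v)=(2k-4)\,\delta(D)$, and the natural deficiency property is indeed equivalent to the per-vertex bound $c_v\ge \tfrac2k H(2k)-\delta(D)$. But this is a reformulation, not a proof, and each point where the reformulated inequality would have to be established is left open. Concretely: (i) the step ``every term is $\ge 0$'' requires $\crn(K_{2k-1})\ge H(2k-1)$, i.e.\ the Hill conjecture at odd order, known only for $2k-1\le 11$ and open even for convex drawings; (ii) even granting that, the identity gives only $\delta(D-v)\le(2k-4)\,\delta(D)$, which coincides with the required $2\delta(D)$ only for $k\le 3$; and (iii) the claim that the case $\delta(D)=0$ is ``automatic'' again presupposes (i).

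The Structure Theorem step is where the proof would actually have to live, and there it is aspirational rather than demonstrative: ``the decrease should be at least the generic amount $\tfrac2k H(2k)$'' is precisely the assertion that needs proving, and Theorem~\ref{th:structure} gives no quantitative control over $r$, over the number of interior versus exterior vertices, or over the crossings among the interior vertices, so no lower bound on $c_v$ follows from it. You concede yourself that the case of deleting a vertex of the natural $K_r$ when $r$ is far below $2k$ is ``essentially the open problem'' of showing convex drawings of $K_m$ have at least $H(m)$ crossings. Note also that the conjecture as stated concerns every drawing of $K_{2k}$, with convexity only a parenthetical restriction, whereas Theorem~\ref{th:structure} applies only to convex drawings; so even a complete execution of your plan would prove only the parenthetical version. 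Your final suggestion --- establishing the property unconditionally for restricted classes such as the $p$-constrained drawings mentioned after Theorem~\ref{th:emptyK4s}, or deriving it from the Hill bound at $2k-1$, $2k$, $2k+1$ as in the remark preceding the conjecture --- is a reasonable research direction, but as written the proposal contains no proof of the conjecture for any $k\ge 4$.
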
  This seems to be an interesting weakening of the Hill Conjecture; it came up tangentially in the proof that $\crn(K_{13})>217$ \cite{mpr}.


 \item  Pach, Solymosi, and T\'oth \cite{pst} proved that, for each positive integer $r$, there is an $N(r)$ such that, for every $n\ge N(r)$, every drawing $D$ of $K_n$ contains either the natural $K_r$ or the Harborth $K_r$ \cite{fewEmpties}.  If $D$ is convex, then it must be the natural $K_r$.  
 
 \begin{question}
 Can this be done for convex drawings directly with bounds better than Pach, Solymosi, and T\'oth?  
\end{question}

\begin{question} Does the answer change in the preceding question if we strengthen convex to h-convex or f-convex?
\end{question}




\item In view of Theorem \ref{th:oneBadK5}, one might expect that neither $\badTkF$ nor $\badFkF$ can occur in an optimal drawing of $K_n$.  On the other hand, Ramsey type considerations suggest that every drawing of $K_p$ should, for large enough $n$, appear in an optimal drawing of $K_n$.  

\begin{conjecture}
Exactly one of the following holds:
\begin{enumerate} \item for all $n\ge 5$, no optimal drawing of $K_n$ contains $\badFkF$; and 
\item for any $p\ge 1$ and any drawing $D$ of $K_p$, there is some $n\ge p$ and \rbrnew{an optimal drawing of $K_n$ (or at least one with at most $H(n)$ crossings)} that contains $D[K_p]$.
\end{enumerate}
\end{conjecture}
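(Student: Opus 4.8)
Write $(\mathrm{A})$ for the first alternative and $(\mathrm{B})$ for the second. To prove that \emph{exactly} one holds, the plan is to split the claim into two independent assertions: \emph{exclusivity}, that $(\mathrm{A})$ and $(\mathrm{B})$ cannot both hold, and \emph{exhaustivity}, that at least one holds. Exclusivity is routine; exhaustivity is the conjectural heart, and I expect it to be the main obstacle.

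For exclusivity, the key observation is that $\badFkF$ is itself a drawing of $K_5$. Suppose both $(\mathrm{A})$ and $(\mathrm{B})$ held. Applying $(\mathrm{B})$ with $p=5$ and $D[K_p]=\badFkF$ produces, for some $n\ge 5$, an optimal drawing of $K_n$ that contains an isomorph of $\badFkF$, directly contradicting $(\mathrm{A})$. The only wrinkle is the parenthetical weakening in $(\mathrm{B})$: if $(\mathrm{B})$ only guarantees a drawing with at most $H(n)$ crossings, this contradicts $(\mathrm{A})$ precisely when $\crn(K_n)=H(n)$, i.e.\ under Hill's Conjecture for that $n$. Reading the existential in $(\mathrm{B})$ through its ``optimal'' clause, exclusivity is unconditional; either way $(\mathrm{A})$ and $(\mathrm{B})$ are mutually exclusive.

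For exhaustivity I would argue the contrapositive: assuming $(\mathrm{A})$ fails, derive $(\mathrm{B})$. So suppose some optimal drawing $D_0$ of $K_{n_0}$ contains an isomorph $J$ of $\badFkF$. The first step exploits Theorem~\ref{th:oneBadK5}: since $D_0$ is optimal, no rerouting can lower its crossing count, so $J$ cannot satisfy the hypothesis of Theorem~\ref{th:oneBadK5} (otherwise we would obtain $D'$ with $\crn(D')\le\crn(D_0)-4$). Hence some $K_7$ of $D_0$ containing $J$ carries a \emph{second} non-convex $K_5$. Feeding this back through Lemma~\ref{lm:arith} (the averaging identity $(n-4)\crn(D)=\sum_v\crn(D-v)$) together with Theorem~\ref{th:oneBadK5} applied to each newly found bad $K_5$, the plan is to show that a single non-convex $K_5$ in an optimal drawing forces an abundance of non-convex $K_5$'s: a proliferation of $\badFkF$-rich local structure that an optimal drawing must tolerate once it tolerates one copy. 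The second step is the amplification: from this abundance, embed an arbitrary target drawing $E=D[K_p]$. Here I would attempt a substitution/blow-up construction — replacing a vertex, or the neighbourhood of $J$, in a large optimal drawing by a rescaled copy of $E$ and bounding the resulting crossings against $H(n)$ — or alternatively a Ramsey extraction locating a prescribed $E$ inside a sufficiently large optimal drawing assembled from the abundant bad structure.

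The main obstacle is exactly this amplification step, and it is genuinely open. Controlling the crossing count of a substitution so that the enlarged drawing stays optimal (or merely at most $H(n)$, where the parenthetical in $(\mathrm{B})$ is a helpful safety net) is the same tight-counting difficulty that makes crossing numbers of $K_n$ hard; and the Ramsey route must deliver an \emph{arbitrary} prescribed drawing $E$, not merely a large natural or Harborth configuration as extracted in \cite{pst}. Thus I would regard the exclusivity half as a complete, short proof, and the exhaustivity half as a research programme whose crux is converting local non-convex abundance into an optimality-preserving embedding of arbitrary drawings.
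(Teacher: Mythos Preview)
The statement is a \emph{conjecture}, not a theorem: the paper offers no proof, listing it among the open problems in Section~\ref{sec:future}. There is therefore no paper argument to compare against.

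Your exclusivity argument is correct under the ``optimal'' reading of $(\mathrm{B})$: instantiating $(\mathrm{B})$ at $p=5$ with $D=\badFkF$ directly contradicts $(\mathrm{A})$. You also correctly flag that under the parenthetical weakening (``at most $H(n)$ crossings'') exclusivity becomes conditional on Hill's Conjecture for the relevant $n$, since a drawing with at most $H(n)$ crossings is optimal precisely when $\crn(K_n)=H(n)$. Your final phrase ``either way $(\mathrm{A})$ and $(\mathrm{B})$ are mutually exclusive'' overstates this: under the weak reading, exclusivity is \emph{not} unconditional.

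Your exhaustivity half is not a proof, and you say so yourself. The sketch --- use Theorem~\ref{th:oneBadK5} to force proliferation of non-convex $K_5$'s in an optimal drawing, then embed an arbitrary $D[K_p]$ by substitution or Ramsey extraction --- has a genuine gap at the amplification step. There is no known mechanism for converting ``many copies of $\badFkF$'' inside an optimal $K_n$ into an optimality-preserving (or even $H(n)$-preserving) embedding of an \emph{arbitrary} prescribed drawing. The Pach--Solymosi--T\'oth theorem you invoke locates only one of two specific configurations, not an arbitrary one; and substitution/blow-up constructions notoriously fail to control crossing counts tightly enough to stay at $H(n)$. What you have written is a correct diagnosis of why the conjecture is reasonable and where the difficulty lies, but it is not a proof --- nor does the paper claim one.
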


\item All known drawings of $K_n$ with $H(n)$ crossings are convex (and possibly even h-convex).  

\begin{question} Is it true that every optimal drawing of $K_n$ is convex?
\end{question}

\end{enumerate}

\end{document}